\crefname{question}{question}{questions}
\DeclarePairedDelimiter\ceil{\lceil}{\rceil}
\date{\today}
\title{How many samples are needed to reliably approximate the best linear estimator for a linear inverse problem? }
\newcommand{\keywords}[1]{\textbf{Key words.}\quad #1}
\newcommand{\msc}[1]{2020 Mathematics Subject Classification.\quad #1}
\begin{document}
\author{Gernot Holler\footnotemark[1]}

\footnotetext[1]{Institute of Mathematics and Scientific Computing, University of Graz,
  Heinrichstrasse 36, 8010 Graz, Austria, email: \texttt{gernot.holler@gmail.com} .}

\maketitle

\begin{abstract}
The linear minimum mean squared error (LMMSE) estimator is the best linear estimator for a Bayesian linear inverse problem with respect to the mean squared error.
It arises as the solution operator to a Tikhonov-type regularized inverse problem with a particular quadratic discrepancy term and a particular quadratic regularization operator.
To be able to evaluate the LMMSE estimator, one must know the forward operator and the first two statistical moments of both the prior and the noise. If such knowledge is not available, one may approximate the LMMSE estimator based on given samples. In this work, it is investigated, in a finite-dimensional setting,  how many samples are needed to reliably approximate the LMMSE estimator, in the sense that, with high probability, the mean squared error of the approximation is smaller than a given multiple of the mean squared error of the LMMSE estimator.  
\end{abstract}
\msc{62J05L, 62H12, 62F15} \\ 
\keywords{inverse problems, statistical learning, linear regression, estimation theory}

\section{Introduction}
The objective in a finite-dimensional linear inverse problem with additive noise is to recover an unknown parameter $x \in \mathbb{R}^N$ from data $y \in \mathbb{R}^M$ of the form
\begin{equation}\label{eq:intro_form_of_data}
y = Ax + z.
\end{equation}
The matrix $A \in \mathbb{R}^{M \times N}$ represents the forward operator $x \mapsto Ax$, and $z \in \mathbb{R}^M$ is the noise. Estimators for inverse problems are functions $f \colon \mathbb{R}^M \to \mathbb{R}^N $ that map the data to an estimate of the parameter such that $f(y) \approx x$. Usually, estimators are designed based on assumptions about the parameter and the noise, and defined implicitly as solution operators to minimization problems that depend on the data and the forward operator \cite{Engl1996, tarantola2005inverse}. We consider the statistical model of an inverse problem \cite{kaipio2006statistical}, where $x$ and $z$ are modeled as realizations of independent random variables $X$ and $Z$; the data $y$ is then a realization of the random variable $Y \coloneqq AX + Z$. We additionally assume that the means of $X$ and $Z$ are zero.
We investigate how many independent samples of parameter-data pairs $(X,Y)$ are needed to learn a linear estimator $f$ with a small mean squared error  
\begin{equation} 
\text{MSE}(f) \coloneqq \ev{ \| f(Y) - X \|_2^2}.
\end{equation}
If the covariance matrix $C_{XX}$ of $X$, the covariance matrix $C_{ZZ}$ of $Z$, and the forward operator $A$ are given, we do not need learning to find a linear estimator with a small mean squared error.  This is because the linear estimator with the smallest mean squared error---the so-called linear minimum mean squared error (LMMSE) estimator---is given by
\begin{equation}\label{eq:LMMSE_estimator}
f_{\text{LMMSE}}(Y) = (A^\top C_{ZZ}^{-1} A + C_{XX}^{-1})^{-1} A^\top C_{ZZ}^{-1} Y;
\end{equation}
see \Cref{prop:characterization_of_LMMSE}. Its mean squared error is given by $\trace{C_{EE}}$ for
\begin{equation*}
C_{EE} \coloneqq C_{XX} - C_{XX} A^\top (A C_{XX} A^\top + C_{ZZ})^{-1} A C_{XX}. 
\end{equation*} 
If $C_{XX}$, $C_{ZZ}$ or $A$ are unknown, then the LMMSE estimator cannot be evaluated and it makes sense to learn linear estimators. We assume that both $C_{XX}$ and $C_{ZZ}$ as well as $A$ are unknown. Given $n$ independent samples $\{(X_i,Y_i)\}_{i=1}^n$ of $(X,Y)$, a simple method to learn a linear estimator is to solve the empirical mean squared error minimization problem
\begin{equation}\label{eq:EMSE}
\min \left\{ \frac{1}{n} \sum_{i=1}^n \|f(Y_i) - X_i\|_2^2\mid f \colon \mathbb{R}^M \to \mathbb{R}^N \text{ is linear} \right\}.
\end{equation}
We call the solution $\hat{f}$ to \eqref{eq:EMSE} the least squares estimator (provided it is unique). The aim of this work is to determine, for given $\varepsilon > 0$, how many samples are needed to ensure that the mean squared error of the least squares estimator is not greater than $(1+\varepsilon)$ times the mean squared error of the LMMSE estimator. Since the least squares estimator is itself random (since it depends on random samples), to avoid ambiguity, we must distinguish whether this error bound should hold only in the expected value or with a certain probability. This distinction leads us to the following two questions.

\begin{question}[Expected mean squared error]\label{quest:avg_err} Given $\varepsilon >0$, how many samples $n$ are needed to ensure that the expected mean squared error of the least squares estimator is not greater than $(1+\varepsilon)$ times the mean squared error of the LMMSE estimator, \ie  
\begin{equation*}
\ev{\| \hat{f}(Y) - X \|_2^2}  \leq \trace{C_{EE}} \left( 1+ \varepsilon \right)?
\end{equation*}
\end{question}
\begin{question}[Tail bounds for the mean squared error]\label{quest:conf_err} Given $\varepsilon >0$ and $\delta \in (0,1)$, how many samples $n$ are needed to ensure that with probability at least $1-\delta$, the mean squared error of the least squares estimator is not greater than $(1+\varepsilon)$ times the mean squared error of the LMMSE estimator, \ie 
\begin{equation}\label{eq:req_quest2}
\ev_{X,Y}{\| \hat{f}(Y) - X \|_2^2}  \leq \trace{C_{EE}} \left( 1+ \varepsilon \right)
\end{equation}
with probability at least $1- \delta$?
\end{question} 

\subsection{The Gaussian model} We first answer \Cref{quest:avg_err,quest:conf_err} for the Gaussian model, \ie under the assumption that $X$ and $Z$ are independent zero-mean Gaussian random vectors. For the Gaussian model, the answer to \Cref{quest:avg_err} is that
\begin{equation}\label{eq:formula_n_gaussian}
n_0(M, \varepsilon) = \lceil{M/\varepsilon\rceil} + M + 1
\end{equation}
samples are needed; see \Cref{rem:expected_error_Gaussian_model}. Here, $\ceil{M/\varepsilon}$ is the least integer greater than or equal to $M/\varepsilon$. The formula in \eqref{eq:formula_n_gaussian} is interesting for two reasons: first, it gives the exact number of samples needed, not just an upper bound for it; therefore, it provides us with a useful benchmark for all subsequent results. Second, it depends only on the dimension of the data $M$ and the tolerance $\varepsilon$ for the relative excess error; it is independent of the dimension of the unknown parameter and the forward operator.

The requirement in \Cref{quest:conf_err} is more restrictive than that in \Cref{quest:avg_err}, at least for small $\delta$, since it requires that the mean squared error of the least squares estimator be smaller than $(1+\varepsilon)$ times the mean squared error of the LMMSE estimator with probability $1-\delta$ and not just in the expected value. However, it can be shown that if the number of samples is chosen according to \eqref{eq:formula_n_gaussian}, then the mean squared error of the least squares estimator concentrates sharply around its expected value as the dimension of the data $M$ tends to infinity; see \Cref{prop:Gaussian_error_converges_in_p}. This asymptotic result implies that for large $M$ the number of samples needed in \Cref{quest:conf_err} is approximately the same as the number of samples needed in \Cref{quest:avg_err}, even for small $\delta$. 
A shortcoming of this result is that it does not quantify how large $M$ must be for this to be true. To overcome this shortcoming, we derive a non-asymptotic tail bound for the mean squared error of the least squares estimator; see \Cref{thm:out_of_sample_error}. For the Gaussian model, this tail bound yields the following bound on the number of samples needed in \Cref{quest:conf_err} (see \Cref{rem:answer_question_2_Gauss}):
\begin{equation}\label{eq:sample_bound}
n_1(M, \varepsilon, \delta, \nu) = \left(\sqrt{\frac{M + 2 \sqrt{M \nu \ln{(3/\delta)} }    + 2 \nu \ln{(3/\delta)}}{\varepsilon}} + \sqrt{M} + \sqrt{\ln{(3/\delta)}} \right)^2. 
\end{equation}
Here, $\nu$ is defined as the ratio of the largest eigenvalue to the sum of all eigenvalues of the matrix $C_{EE}$, \ie $\nu \coloneqq \| C_{EE} \|_2 /\trace{C_{EE}}$, where $\| C_{EE} \|_2$  denotes the spectral norm of $C_{EE}$. Since the number of eigenvalues of $C_{EE}$ is equal to the dimension of $X$, we expect $\nu$ to decrease as the dimension of $X$ increases. The bound in \eqref{eq:sample_bound} then suggests that the mean squared error of the least squares estimator is more concentrated for high-dimensional parameters than for low-dimensional parameters; this suggestion is consistent with our experiments; see \Cref{fig:empirical_tail_mse_Gauss}. In practice, we prefer a bound that is independent of the (unknown) value of $\nu$. Fortunately, since $\nu \leq 1$, a simple calculation shows that
\begin{equation}\label{eq:no_samples_needed}
n_2(M, \varepsilon, \delta) = \left(M + 2 \sqrt{M \ln{(3/\delta)}}    + 2  \ln{(3/\delta)}\right) \left(1/\varepsilon +2/\sqrt{\varepsilon} + 1 \right)
\end{equation}
is an upper bound for the number of samples in \eqref{eq:sample_bound}. The formula in \eqref{eq:no_samples_needed} scales only logarithmically in $1/\delta$. For fixed $\delta \in (0,1)$, we have $n_2(M,\varepsilon,\delta) / n_0(M, \varepsilon) \to 1$ as $M \to \infty$ and $\varepsilon \to 0$, where $n_0(M, \varepsilon)$ is defined in \eqref{eq:formula_n_gaussian}. Hence, for large $M$ and small $\varepsilon>0$, the numbers of samples needed in \Cref{quest:avg_err,quest:conf_err} are approximately the same.

\subsection{The general (sub-Gaussian) model}
Our goal is to obtain similar answers as for the Gaussian model for more general models. The main contribution of this paper is \Cref{thm:out_of_sample_error}, which provides a non-asymptotic tail bound for the mean squared error of the least squares estimator under sub-Gaussian conditions. Under appropriate additional assumptions, this tail bound yields an upper bound for \Cref{quest:conf_err} for the general model that is similar to the upper bound for the Gaussian model in \eqref{eq:sample_bound}; see \Cref{rem:answer_question_2}. Moreover, it leads to a bound for the asymptotic mean squared error as $M$ tends to $\infty$; see \Cref{rem:asymptotic_error}. 

\subsection{Related work}
A consistency analysis for learning the regularization parameter in a regularized inverse problem with a quadratic regularization parameter is provided in \cite{chada2021consistency}. Asymptotic results for the eigenvalues of random matrices have been used before to investigate the asymptotic behavior of the mean squared error for least squares and ridge regression; see \eg \cite{hastie2019surprises, dobriban2018high} and the references given there. Our main result,  \Cref{thm:out_of_sample_error}, is partly inspired by \cite[Theorem 1]{hsu2012random}, which provides a non-asymptotic tail bound for the mean squared error of the least squares estimator in random design linear regression. While \cite[Theorem 1]{hsu2012random} covers only the case of a scalar response, we allow for multi-dimensional responses. Moreover, we believe that the hypotheses in \Cref{thm:out_of_sample_error} are more natural than the comparable hypotheses in \cite[Condition 1-3]{hsu2012random}. This is because Condition 1 and 3 in \cite{hsu2012random} both involve the random variable $C_{YY}^{-1/2} Y$ (when formulated in our notation), whereas in \Cref{thm:out_of_sample_error} the hypotheses on $C_{YY}^{-1/2} Y$ are separated from those on the conditional mean squared error of the minimum mean squared error (MMSE) estimator and the conditional distance of the LMMSE to the MMSE estimator.
Tail bounds for the mean squared error of the least squares estimator are provided also in \cite[Theorem 1.2]{oliveira2016lower} (under finite moment conditions), but also in this result $C_{YY}^{-1/2} Y$ is involved in multiple hypotheses.

\subsection{Outline}
In \Cref{sec:preliminaries}, we recall the necessary background on minimal mean squared error estimators. In \Cref{sec:lstsq}, we introduce notation and state the least squares problem. In \Cref{sec:asymptotic-mse}, we derive estimates for the expected value of the mean squared error of the least squares estimator. In \Cref{sec:non-asymptotic-mse}, we derive a non-asymptotic tail bound for the mean squared error of the least squares estimator. In \Cref{sec:num-exp}, we provide numerical experiments.

\section{Preliminaries}\label{sec:preliminaries}
Throughout this work, we let the parameter $X$ and the noise $Z$ be independent random variables taking values in $\mathbb{R}^N$ and $\mathbb{R}^M$. We make the following assumptions: $X$ and $Z$ are square integrable, the means of $X$ and $Z$ are zero, and the covariance matrices $C_{XX} \in \mathbb{R}^{N \times N}$ and $C_{ZZ} \in \mathbb{R}^{M \times M}$ of $X$ and $Z$ are invertible. Moreover, we suppose that the push-forward measures of $X$ and $Z$ have a continuous Radon--Nikod\'ym derivative with respect to the Lebesgue measure. The data is given by $Y \coloneqq A X + Z$ for $A \in \mathbb{R}^{M \times N}$. We let $\| \cdot \|_2$  denote the Euclidean norm, and let $\trace{\cdot}$ denote the trace of a matrix.

\subsection{The minimum mean squared error estimator}\label{subsec:MMSE_estimator}
Although we focus on linear estimators, to interpret the hypotheses of \Cref{thm:out_of_sample_error}, we need to recall that the estimator with the smallest mean squared error among all measurable functions (without the linearity requirement)---the so-called minimum mean squared error (MMSE) estimator---is equal to the conditional expected value of $X$, \ie
\begin{equation}\label{eq:MSE_alternate_form}
f_\text{MMSE}(Y) = \ev{X \mid Y}.
\end{equation}
Even though this fact is well-known, see \eg \cite[Section 11.4]{kay1993fundamentals} and \cite[Example 2.2.6 on p.~58]{casella2002}, we believe it is worthwhile to sketch how it can be proven: by the law of iterated expectations \cite[Theorem 3.24 on p.~55]{wasserman2013all}, we have
\begin{equation}\label{eq:iterated_MSE}
\ev{ \| f(Y) - X \|_2^2} = \ev_Y{\ev{ \|f(Y) - X\|_2^2 \mid Y} }.
\end{equation}
By iterating the expectations as in \eqref{eq:iterated_MSE}, we transform the problem of minimizing the mean squared error among all measurable functions $f$ to the pointwise problem of minimizing, for each given $Y$,  the conditional mean squared error $\ev{ \|f(Y) - X\|_2^2 \mid Y}$ among all estimates $f(Y)$. A simple calculation shows that the conditional expected value satisfies
\begin{equation*}
\ev{\langle \ev{X \mid Y} - X, v \rangle_2  \mid Y} = 0  \quad \text{for all } v \in \mathbb{R}^N,
\end{equation*}
where $\langle \cdot, \cdot \rangle_2$ is the Euclidean inner product. The Pythagorean theorem then yields
\begin{equation}\label{eq:pythagoras}
\ev{\| f(Y) - X \|_2^2 \mid Y} = \ev{\| \ev{X \mid Y} - X \|_2^2 \mid Y} + \| \ev{X \mid Y} - f(Y) \|_2^2.
\end{equation}
Since the last summand in \eqref{eq:pythagoras} is always nonnegative, and zero only if $f(Y)=\ev{X \mid Y}$, it follows that $\ev{X \mid Y}$ minimizes the conditional mean squared error for each $Y$. Using \eqref{eq:iterated_MSE} and the monotonicity of the expectation operator, we deduce that the conditional expected value minimizes the mean squared error among all measurable functions, which is what we wanted to show. The identity in \eqref{eq:pythagoras} has another important consequence: it shows that the conditional mean squared error of any estimate $f(Y)$ is equal to the sum of the conditional mean squared error of the MMSE estimate and the conditional squared distance of $f(Y)$ to the MMSE estimate. Hence, up to a constant, the mean squared error of any estimator depends only on its mean squared distance to the MMSE estimator. Unfortunately, the formula for the MMSE estimator \eqref{eq:MSE_alternate_form} can be evaluated only if the conditional expected value is known---which is usually not the case. As a simpler alternative, we consider minimizing the mean squared error among all linear functions.

\subsection{The linear minimum mean squared error estimator}
A minimizer of the mean squared error among all linear functions is called a linear minimum mean squared error (LMMSE) estimator. In the following proposition, we prove that the LMMSE estimator is unique and recall some of its basic properties. 
\begin{prop}[Characterization and properties of the LMMSE estimator]\label{prop:characterization_of_LMMSE}
We have
\begin{enumerate}[label=\roman*)]
\item the LMMSE estimator is unique and given by
\begin{equation*}
f_{\text{LMMSE}}(Y) = (A^\top C_{ZZ}^{-1} A + C_{XX}^{-1})^{-1} A^\top C_{ZZ}^{-1} Y,
\end{equation*} \label{item:LMMSE_unique_and_charact}
\item the LMMSE estimate $f_{\text{LMMSE}}(y) $ is the unique solution to 
\begin{equation*}
\min_{x \in \R^N}  \| A x - y\|_{C_{ZZ}^{-1/2}}^2 + \| x \|^2_{C_{XX}^{-1/2}},
\end{equation*}
where
\begin{equation*}
\| u \|_{C_{ZZ}^{-1/2}} \coloneqq \sqrt{\langle C_{ZZ}^{-1} u,  u \rangle_2} \quad \text{and} \quad \| v \|_{C_{XX}^{-1/2}} \coloneqq \sqrt{\langle C_{XX}^{-1} v,  v \rangle_2},
\end{equation*}\label{item:characterization_LMMSE_reginvprob}
\item the mean squared error of the LMMSE estimator is given by $\trace{C_{EE}}$, where 
\begin{equation*}
C_{EE}  = C_{XX} - C_{XX} A^\top (A C_{XX} A^\top + C_{ZZ})^{-1} A C_{XX}.
\end{equation*}\label{item:error_LMMSE}
\end{enumerate}
\end{prop}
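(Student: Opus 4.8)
The plan is to parametrize linear estimators by matrices and thereby turn the minimization of the mean squared error into an unconstrained quadratic optimization problem over a matrix variable. Write a generic linear estimator as $f(Y) = FY$ with $F \in \mathbb{R}^{N \times M}$. Using that $X$ and $Z$ are independent with zero mean, I would first record the second moments $C_{YY} \coloneqq \ev{YY^\top} = A C_{XX} A^\top + C_{ZZ}$ and $C_{XY} \coloneqq \ev{XY^\top} = C_{XX} A^\top$, and then expand
\begin{equation*}
\ev{\|FY - X\|_2^2} = \trace{F C_{YY} F^\top} - 2\,\trace{F C_{XY}^\top} + \trace{C_{XX}}.
\end{equation*}
Since $C_{ZZ}$ is positive definite by assumption and $A C_{XX} A^\top$ is positive semidefinite, $C_{YY}$ is positive definite; hence the right-hand side is a strictly convex quadratic in $F$, which immediately yields uniqueness of the minimizer. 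Differentiating with respect to $F$ and equating to zero gives the normal equations $F C_{YY} = C_{XY}$, so the unique LMMSE estimator is $f_{\text{LMMSE}}(Y) = C_{XX} A^\top (A C_{XX} A^\top + C_{ZZ})^{-1} Y$.

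To bring this into the form stated in \ref{item:LMMSE_unique_and_charact}, I would verify the algebraic identity
\begin{equation*}
(A^\top C_{ZZ}^{-1} A + C_{XX}^{-1})\, C_{XX} A^\top = A^\top C_{ZZ}^{-1} (A C_{XX} A^\top + C_{ZZ})
\end{equation*}
by expanding both sides, and then left-multiply by $(A^\top C_{ZZ}^{-1} A + C_{XX}^{-1})^{-1}$ and right-multiply by $(A C_{XX} A^\top + C_{ZZ})^{-1}$; both bracketed matrices are invertible under the standing assumptions. For part \ref{item:characterization_LMMSE_reginvprob}, observe that the Tikhonov functional equals $\langle C_{ZZ}^{-1}(Ax - y), Ax - y \rangle_2 + \langle C_{XX}^{-1} x, x \rangle_2$, which is strictly convex in $x$ because $C_{XX}^{-1}$ is positive definite; hence it has a unique minimizer, characterized by the vanishing of its gradient, $(A^\top C_{ZZ}^{-1} A + C_{XX}^{-1}) x = A^\top C_{ZZ}^{-1} y$, and solving for $x$ reproduces exactly the formula in \ref{item:LMMSE_unique_and_charact}.

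Finally, for part \ref{item:error_LMMSE}, I would substitute the optimal matrix $F^\ast = C_{XX} A^\top (A C_{XX} A^\top + C_{ZZ})^{-1}$ into the expansion of the mean squared error from the first step and use the normal equations $F^\ast C_{YY} = C_{XY}$ to rewrite the quadratic term as $\trace{F^\ast C_{YY} (F^\ast)^\top} = \trace{F^\ast C_{XY}^\top}$, which cancels one of the two cross terms; by linearity of the trace this leaves
\begin{equation*}
\ev{\|f_{\text{LMMSE}}(Y) - X\|_2^2} = \trace{C_{XX}} - \trace{C_{XX} A^\top (A C_{XX} A^\top + C_{ZZ})^{-1} A C_{XX}} = \trace{C_{EE}}.
\end{equation*}
An alternative for this last part is the orthogonality principle $\ev{(f_{\text{LMMSE}}(Y) - X) Y^\top} = 0$, which also gives $C_{EE} = C_{XX} - F^\ast C_{XY}^\top$. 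I do not expect any step to be a genuine obstacle; the only points that require a little care are confirming that $C_{YY}$ is invertible (so that the normal equations are solvable and the quadratic is strictly convex), which is exactly where the hypothesis that $C_{ZZ}$ is invertible enters, and checking the matrix identity that bridges the two equivalent expressions for the estimator.
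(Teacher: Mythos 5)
Your proposal is correct, and it is more self-contained than the paper's argument. The paper disposes of items \ref{item:LMMSE_unique_and_charact} and \ref{item:error_LMMSE} by citing a textbook result (Kay, Theorem 12.1) and only supplies the bridging identity $C_{XX} A^\top (A C_{XX} A^\top + C_{ZZ})^{-1} = (A^\top C_{ZZ}^{-1} A + C_{XX}^{-1})^{-1} A^\top C_{ZZ}^{-1}$ --- the same push-through identity you verify by expanding $(A^\top C_{ZZ}^{-1} A + C_{XX}^{-1}) C_{XX} A^\top = A^\top C_{ZZ}^{-1}(A C_{XX} A^\top + C_{ZZ})$; item \ref{item:characterization_LMMSE_reginvprob} is handled in the paper exactly as you do, via the first-order optimality conditions of the strictly convex Tikhonov functional. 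What you do differently is to derive \ref{item:LMMSE_unique_and_charact} and \ref{item:error_LMMSE} from scratch: parametrizing linear estimators by $F \in \mathbb{R}^{N \times M}$, expanding $\ev{\|FY - X\|_2^2} = \trace{F C_{YY} F^\top} - 2\trace{F C_{XY}^\top} + \trace{C_{XX}}$, and invoking strict convexity (valid since $C_{YY} = A C_{XX} A^\top + C_{ZZ} \succ 0$ because $C_{ZZ}$ is invertible) to get uniqueness and the normal equations $F C_{YY} = C_{XY}$, from which the error formula in \ref{item:error_LMMSE} drops out by substituting $F^*$ back and using $\trace{F^* C_{YY} (F^*)^\top} = \trace{F^* C_{XY}^\top}$. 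All steps check out, including the invertibility of $A^\top C_{ZZ}^{-1} A + C_{XX}^{-1}$ needed for the bridge. What your route buys is transparency: it makes explicit exactly where the standing assumptions (invertibility of $C_{XX}$ and $C_{ZZ}$, zero means, independence of $X$ and $Z$) enter, and it exhibits the orthogonality/normal-equation structure that the paper later reuses informally (e.g.\ in the decomposition \cref{MSE_decompostion_linear}). What the paper's route buys is brevity, at the cost of deferring the substantive computation to the cited reference.
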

\begin{proof}
The claims in \ref{item:LMMSE_unique_and_charact} and \ref{item:error_LMMSE} follow from \cite[Theorem 12.1 on p.~391]{kay1993fundamentals} and the identity
\begin{equation*}
C_{XX} A^\top (A C_{XX} A^\top + C_{ZZ})^{-1} = (A^\top C_{ZZ}^{-1} A + C_{XX}^{-1})^{-1} A^\top C_{ZZ}^{-1}  ,
\end{equation*}
The claim in \ref{item:characterization_LMMSE_reginvprob} follows by comparing the characterization in  \ref{item:LMMSE_unique_and_charact} with the first-order necessary and sufficient optimality conditions for the minimization problem in \ref{item:characterization_LMMSE_reginvprob}. 
\end{proof}
\Cref{prop:characterization_of_LMMSE} reveals that the LMMSE estimator and its mean squared error depend only on the first two statistical moments of both the parameter $X$ and the noise $Z$. The mean squared error of any linear estimator $f$ is equal to the sum of the mean squared error of the LMMSE estimator and the approximation error, \ie 
\begin{equation}
\ev{\| f(Y) - X  \|_2^2 } = \ev{\| f_{\text{LMMSE}}(Y) - X \|_2^2 } + \ev{\| f_{\text{LMMSE}}(Y) - f(Y) \|_2^2 }.
\end{equation}
This error decomposition holds because the LMMSE estimator is the orthogonal projection of the random variable $X$ onto the subspace of all linear functions depending only on $Y$. Since the mean squared error of the LMMSE estimator is $\trace{C_{EE}}$, we deduce that
\begin{equation}\label{MSE_decompostion_linear}
\ev{\| f(Y) - X  \|_2^2 } = \trace{C_{EE}} + \ev{\| f(Y)- f_{\text{LMMSE}}(Y)  \|_2^2 }.
\end{equation}
The error decomposition in \eqref{MSE_decompostion_linear} shows that $\ev{\| f(Y) - X  \|_2^2 } \leq (1+\varepsilon) \trace{C_{EE}}$ if and only if
$
\ev{\| f(Y) - f_{\text{LMMSE}}(Y) \|_2^2 } \leq \varepsilon \, \trace{C_{EE}}.
$
This observation allows us to restrict our attention to the approximation error $\ev{\| f(Y) - f_{\text{LMMSE}}(Y) \|_2^2 }$ in all subsequent results. 

\subsection{The estimation error}
We define the estimation error (of the LMMSE estimator) by $E \coloneqq X - f_\text{LMMSE}(Y)$. 
\begin{prop}\label{prop:estimation_error}
The conditional estimation error satisfies
\begin{align}
\ev{E \mid Y } &= f_\text{MMSE}(Y) - f_\text{LMMSE}(Y), \label{eq:ev_estimation_error} \\
\cov{E \mid Y} &= \ev{ (X - f_{\text{MMSE}}(Y))(X - f_{\text{MMSE}}(Y))^\top \mid Y}. \label{eq:cov_estimation_error}
\end{align}
\end{prop}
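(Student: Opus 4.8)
The plan is to use only two facts: first, that $f_{\text{LMMSE}}(Y)$ is a Borel function of $Y$ and hence acts as a constant under conditioning on $Y$; and second, the identity $f_{\text{MMSE}}(Y) = \ev{X \mid Y}$ recalled in \eqref{eq:MSE_alternate_form}. For the first identity \eqref{eq:ev_estimation_error}, I would write $\ev{E \mid Y} = \ev{X - f_{\text{LMMSE}}(Y) \mid Y}$ and use linearity of the conditional expectation together with the fact that the $\sigma(Y)$-measurable term $f_{\text{LMMSE}}(Y)$ can be pulled out of the conditional expectation; this is legitimate since $X$ is square integrable and $f_{\text{LMMSE}}(Y)$ is a linear (hence measurable) image of the square integrable random vector $Y$. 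This yields $\ev{E \mid Y} = \ev{X \mid Y} - f_{\text{LMMSE}}(Y)$, which equals $f_{\text{MMSE}}(Y) - f_{\text{LMMSE}}(Y)$ by \eqref{eq:MSE_alternate_form}.

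For the second identity \eqref{eq:cov_estimation_error}, I would start from the definition of the conditional covariance,
\[
\cov{E \mid Y} = \ev{\bigl(E - \ev{E \mid Y}\bigr)\bigl(E - \ev{E \mid Y}\bigr)^\top \mid Y},
\]
and substitute the expression for $\ev{E \mid Y}$ obtained above. The centered error then simplifies,
\[
E - \ev{E \mid Y} = \bigl(X - f_{\text{LMMSE}}(Y)\bigr) - \bigl(f_{\text{MMSE}}(Y) - f_{\text{LMMSE}}(Y)\bigr) = X - f_{\text{MMSE}}(Y),
\]
so that $\cov{E \mid Y} = \ev{(X - f_{\text{MMSE}}(Y))(X - f_{\text{MMSE}}(Y))^\top \mid Y}$, which is precisely the claim.

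There is essentially no obstacle here: the statement is a direct consequence of the definitions and of \eqref{eq:MSE_alternate_form}. The only minor point requiring (routine) care is the measurability and integrability justification for pulling $f_{\text{LMMSE}}(Y)$ out of the conditional expectation, which is guaranteed by the standing assumption that $X$ and $Z$ are square integrable. Conceptually, \eqref{eq:cov_estimation_error} combined with \eqref{eq:pythagoras} re-expresses the decomposition ``conditional mean squared error of the LMMSE estimate $=$ conditional mean squared error of the MMSE estimate $+$ conditional squared bias of the LMMSE estimate,'' but this remark is not needed for the proof itself.
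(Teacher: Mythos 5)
Your proof is correct and follows essentially the same route as the paper: the first identity via $f_{\text{MMSE}}(Y)=\ev{X\mid Y}$ and pulling the $\sigma(Y)$-measurable term $f_{\text{LMMSE}}(Y)$ out of the conditional expectation, and the second identity by noting that $E-\ev{E\mid Y}=X-f_{\text{MMSE}}(Y)$, which is just the translation invariance of the conditional covariance made explicit. No gaps; your version simply spells out the routine measurability and integrability details that the paper leaves implicit.
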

\begin{proof}
The identity in \eqref{eq:ev_estimation_error} follows from the characterization of the MMSE estimator in \eqref{eq:MSE_alternate_form}. 
The identity in \eqref{eq:cov_estimation_error} is by the translation invariance of the covariance operator. 
\end{proof}
 
The identity in \eqref{eq:ev_estimation_error} shows that the assumption that the $i$th component of the conditional expectation of the estimation error is bounded by a constant is equivalent to the assumption that the $i$th component of the difference between the LMMSE and the MMSE estimates is bounded by the same constant. The identity in \eqref{eq:cov_estimation_error} shows that the assumption that the trace of a matrix $C \in \mathbb{R}^{N \times N}$ is an upper bound for the trace of the conditional covariance matrix $\cov{E \mid Y}$ is equivalent to the assumption that the trace of $C$ is an upper bound for the conditional mean squared error of the MMSE estimator.

\subsection{The Gaussian model}
The Gaussian model of a linear inverse problem assumes that $X$ and $Z$ are zero-mean Gaussian random vectors. Two remarkable properties of the Gaussian model that do not apply in general are: 1.) the LMMSE and the MMSE estimators coincide, and 2.) the conditional estimation error is independent of $Y$. We verify these properties by recalling that $X \mid Y$ is a Gaussian random vector whose mean depends linearly on $Y$, and whose covariance matrix is independent of $Y$.

\begin{prop}[Posterior of the Gaussian model]\label{prop:posterior_gaussian_model}
Assume that $X$ and $Z$ are independent zero-mean Gaussian random vectors with invertible covariance matrices $C_{XX}$ and $C_{ZZ}$. Then $X \mid Y$ is a Gaussian random vector and  
\begin{align}
\ev{X \mid Y } &= C_{XX} A^\top ( A C_{XX} A^\top + C_{ZZ})^{-1} Y,\label{eq:mean_Gaussian} \\
\cov{X \mid Y } &= C_{XX} - C_{XX} A^\top (A C_{XX} A^\top + C_{ZZ})^{-1} A C_{XX} \label{eq:cov_Gaussian}
\end{align}
In particular, the LMMSE and the MMSE estimators coincide. Moreover, the conditional estimation error $(E \mid Y)  = (X\mid Y) - f_{\text{LMMSE}}(Y)$ is independent of $Y$, and follows a Gaussian distribution with mean zero and covariance matrix as in \eqref{eq:cov_Gaussian}.

\end{prop}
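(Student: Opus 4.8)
The plan is to obtain the conditional law of $X$ given $Y$ from the joint Gaussianity of the pair $(X,Y)$ (this is the classical Gaussian conditioning result), and then read off the remaining assertions. First I would note that, since $X$ and $Z$ are independent zero-mean Gaussian vectors and $Y = AX + Z$, the stacked vector $(X^\top, Y^\top)^\top$ is a linear image of the zero-mean Gaussian vector $(X^\top, Z^\top)^\top$, hence itself zero-mean Gaussian, with second moments $\ev{X X^\top} = C_{XX}$, $\ev{X Y^\top} = C_{XX} A^\top$, and $\ev{Y Y^\top} = A C_{XX} A^\top + C_{ZZ}$ (using $\ev{X Z^\top} = 0$). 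The matrix $A C_{XX} A^\top + C_{ZZ}$ is invertible, being the sum of the positive semidefinite $A C_{XX} A^\top$ and the positive definite $C_{ZZ}$.

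The key step is to produce an explicit decomposition of $X$ into a part that is a linear function of $Y$ and a part independent of $Y$. To this end I would set
\begin{equation*}
W \coloneqq X - C_{XX} A^\top (A C_{XX} A^\top + C_{ZZ})^{-1} Y .
\end{equation*}
Then $(W^\top, Y^\top)^\top$ is again zero-mean Gaussian (a linear image of $(X^\top, Y^\top)^\top$), and a direct computation gives $\ev{W Y^\top} = C_{XX} A^\top - C_{XX} A^\top (A C_{XX} A^\top + C_{ZZ})^{-1}(A C_{XX} A^\top + C_{ZZ}) = 0$. Since uncorrelated jointly Gaussian vectors are independent, $W$ is independent of $Y$. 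Writing $X = W + C_{XX} A^\top (A C_{XX} A^\top + C_{ZZ})^{-1} Y$ with $W$ independent of $Y$ then shows that $X \mid Y$ is Gaussian with mean $C_{XX} A^\top (A C_{XX} A^\top + C_{ZZ})^{-1} Y$, which is \eqref{eq:mean_Gaussian}, and with covariance $\cov{W} = \ev{W W^\top} = C_{XX} - C_{XX} A^\top (A C_{XX} A^\top + C_{ZZ})^{-1} A C_{XX}$, which is \eqref{eq:cov_Gaussian} and is independent of $Y$.

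For the remaining claims: by \eqref{eq:MSE_alternate_form}, $f_\text{MMSE}(Y) = \ev{X \mid Y}$, which by \eqref{eq:mean_Gaussian} is a linear function of $Y$; comparing it with the characterization of the LMMSE estimator in \Cref{prop:characterization_of_LMMSE} --- via the matrix identity $C_{XX} A^\top (A C_{XX} A^\top + C_{ZZ})^{-1} = (A^\top C_{ZZ}^{-1} A + C_{XX}^{-1})^{-1} A^\top C_{ZZ}^{-1}$ used in its proof --- shows that $f_\text{MMSE} = f_\text{LMMSE}$. Hence the conditional estimation error is $(E \mid Y) = (X \mid Y) - f_\text{LMMSE}(Y) = (X \mid Y) - \ev{X \mid Y}$, which is precisely the residual $W$ introduced above; therefore it is independent of $Y$ and Gaussian with mean zero and covariance \eqref{eq:cov_Gaussian}. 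I do not expect a genuine obstacle: the only two points needing care are the invertibility of $A C_{XX} A^\top + C_{ZZ}$ and the standard implication \enquote{uncorrelated $+$ jointly Gaussian $\Rightarrow$ independent}; everything else is bookkeeping with block second-moment matrices.
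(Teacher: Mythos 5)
Your proof is correct, but it takes a more self-contained route than the paper for the central step. The paper simply cites a textbook result (Kaipio--Somersalo, Theorem 3.7) for the fact that $X \mid Y$ is Gaussian with mean \eqref{eq:mean_Gaussian} and covariance \eqref{eq:cov_Gaussian}, and then concludes as you do: the conditional mean is linear in $Y$, so the MMSE and LMMSE estimators coincide, and the last assertion follows from translation invariance of the conditional covariance. You instead prove the conditioning result from scratch via the orthogonal decomposition $X = W + C_{XX} A^\top (A C_{XX} A^\top + C_{ZZ})^{-1} Y$ with $\ev{W Y^\top} = 0$, using that uncorrelated jointly Gaussian vectors are independent; this is the classical argument and is sound (the invertibility of $A C_{XX} A^\top + C_{ZZ}$ and the joint Gaussianity of $(X^\top,Y^\top)^\top$ are handled correctly). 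What your approach buys is twofold: the proposition becomes independent of the external reference, and the independence of the conditional estimation error from $Y$ falls out immediately, since the error is exactly your residual $W$, rather than being deduced separately via translation invariance of the covariance operator. Your identification of the MMSE with the LMMSE estimator through the matrix identity $C_{XX} A^\top (A C_{XX} A^\top + C_{ZZ})^{-1} = (A^\top C_{ZZ}^{-1} A + C_{XX}^{-1})^{-1} A^\top C_{ZZ}^{-1}$ matches the identity already used in the proof of \Cref{prop:characterization_of_LMMSE}, so nothing further is needed there.
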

\begin{proof}
The claim that $X\mid Y$ is a Gaussian random vector with mean and covariance matrix as in \eqref{eq:mean_Gaussian} and \eqref{eq:cov_Gaussian} is by \cite[Theorem 3.7 on p.~78]{kaipio2006statistical}. Since the MMSE estimator is equal to the conditional expected value, the identity in \eqref{eq:mean_Gaussian} shows that the MMSE estimator is linear, which proves that the MMSE and the LMMSE estimators coincide. The final assertion follows from the translation invariance of the covariance operator.
\end{proof}

\section{The least squares problem}\label{sec:lstsq}

Throughout this work, we suppose that $n$ independent samples $\{(X_i,Y_i)\}_{i=1}^n$ of $(X,Y)$ are available. To simplify the presentation of our results, we use the following notation:
\begin{enumerate}[label=\arabic*.)]
\item We let $\Theta_* \in \mathbb{R}^{M \times N}$ be the  matrix representing the LMMSE estimator, \ie we define $\Theta_*^\top \coloneqq (A^\top C_{ZZ}^{-1} A + C_{XX}^{-1})^{-1} A^\top C_{ZZ}^{-1}$, such that $f_\text{LMMSE}(Y) = \Theta_*^\top Y$.

\item The estimation error is given by $E = X - \Theta_* Y$. Accordingly, we define the estimation error in the $i$th 
sample by $E_i \coloneqq X_i - \Theta_*^\top Y_i$ for $1 \leq i \leq n$.
\item We group the data into matrices by defining $\boldsymbol{Y}  \coloneqq (Y_1, \dots, Y_n)^\top \in \mathbb{R}^{n \times M}$, $\boldsymbol{X} \coloneqq (X_1, \dots, X_n)^\top \in \mathbb{R}^{n \times N}$, and $\boldsymbol{E} \coloneqq (E_i, \dots, E_n)^\top \in \mathbb{R}^{n \times N} $. 

\end{enumerate}
Simple calculations yield $\boldsymbol{E}= \boldsymbol{X} - \boldsymbol{Y} \Theta_*$ and
\begin{equation}\label{eq:cost_functional}
\frac{1}{n} \sum_{i=1}^n \|\Theta^\top Y_i - X_i\|_2^2 =  \frac{1}{n} \|\boldsymbol{Y} \Theta - \boldsymbol{X} \|_F^2,
\end{equation}
where $\|\cdot \|_F$ denotes the Frobenius norm. The identity in \eqref{eq:cost_functional} implies that the empirical mean squared error minimization problem in \eqref{eq:EMSE} is equivalent to
\begin{equation}\label{eq:EMSE_Theta}
\min_{\Theta \in \mathbb{R}^{M \times N}} (1/n) \|\boldsymbol{Y} \Theta - \boldsymbol{X} \|_F^2.
\end{equation}
If $\boldsymbol{Y}$ is injective, then \eqref{eq:EMSE_Theta}  has a unique solution, which is given by 
\begin{equation}\label{eq:least_squares estimator}
\hat{\Theta}= (\boldsymbol{Y}^\top \boldsymbol{Y})^{-1} \boldsymbol{Y}^\top  \boldsymbol{X}.
\end{equation}
Note that $\hat{\Theta}$ is the matrix representation of the least squares estimator, \ie $\hat{f}(Y) = \hat{\Theta}^\top Y$. The difference of the least squares estimator and the LMMSE estimator is given by
\begin{equation}\label{eq:diff_lstsq_LMMSE_stimator}
\hat{\Theta} - \Theta_*  = (\boldsymbol{Y}^\top \boldsymbol{Y})^{-1} \boldsymbol{Y}^\top \boldsymbol{E}. 
\end{equation}
\section{Expected mean squared error}\label{sec:asymptotic-mse}
A standard trick in linear regression is to use the linearity of the expectation operator to derive results for multi-dimensional responses (in our case the response is $X$) from results for one-dimensional responses.
We use this trick in the derivation of the expected approximation error of the least squares estimator $\hat{\Theta}$. Whenever the response is one-dimensional, we write $\hat{\theta}$ and $\theta_*$ instead of $\hat{\Theta}$ and $\Theta_*$.

\begin{thm}\label{thm:expected_approx_1d}
Let $N=1$ and assume that there is $\sigma \geq 0$ such that $\cov{E \mid Y} \leq \sigma^2$ almost surely. Moreover, suppose that $\boldsymbol{Z} \coloneqq \boldsymbol{Y} C_{YY}^{-1/2}$ is almost surely injective. Then the least squares estimator is almost surely unique and
\begin{equation}\label{eq:exp_approx_error_1d}
\ev{ | \hat{\theta}^\top Y - \theta_*^\top Y |^2 } \leq \sigma^2 \ev{ \trace{ (\boldsymbol{Z}^\top \boldsymbol{Z})^{-1} }} + \ev_{\boldsymbol{Y}}{\|(\boldsymbol{Z}^\top \boldsymbol{Z})^{-1} \boldsymbol{Z}^\top \ev{\boldsymbol{E} \mid \boldsymbol{Y}} \|_2^2 }. \\
\end{equation}
The relation in \eqref{eq:exp_approx_error_1d} is an equality if and only if $\cov{E \mid Y} = \sigma^2$ almost surely.
\end{thm}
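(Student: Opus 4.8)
The plan is to reduce the out-of-sample error to a quantity depending only on the training data, and then to apply the conditional bias--variance decomposition given $\boldsymbol{Y}$. First, since $C_{YY}^{1/2}$ is invertible, the almost sure injectivity of $\boldsymbol{Z} = \boldsymbol{Y} C_{YY}^{-1/2}$ is equivalent to the almost sure injectivity of $\boldsymbol{Y}$, so by the discussion following \eqref{eq:least_squares estimator} the least squares estimator $\hat{\theta}$ is almost surely unique and given by \eqref{eq:diff_lstsq_LMMSE_stimator} with $N=1$.

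Next I would condition on the training samples $\{(X_i,Y_i)\}_{i=1}^n$. Given these, $\hat{\theta}-\theta_*$ is deterministic, while the test vector $Y$ is an independent copy with zero mean and covariance $C_{YY}$; hence $\ev{|(\hat{\theta}-\theta_*)^\top Y|^2 \mid \{(X_i,Y_i)\}_i} = (\hat{\theta}-\theta_*)^\top C_{YY}(\hat{\theta}-\theta_*) = \|C_{YY}^{1/2}(\hat{\theta}-\theta_*)\|_2^2$. Taking the expectation over the training data and substituting $\boldsymbol{Y} = \boldsymbol{Z} C_{YY}^{1/2}$ into \eqref{eq:diff_lstsq_LMMSE_stimator} gives the identity $C_{YY}^{1/2}(\hat{\theta}-\theta_*) = (\boldsymbol{Z}^\top\boldsymbol{Z})^{-1}\boldsymbol{Z}^\top\boldsymbol{E}$, so
\[
\ev{|\hat{\theta}^\top Y - \theta_*^\top Y|^2} = \ev{\|C_{YY}^{1/2}(\hat{\theta}-\theta_*)\|_2^2} = \ev{\|(\boldsymbol{Z}^\top\boldsymbol{Z})^{-1}\boldsymbol{Z}^\top\boldsymbol{E}\|_2^2}.
\]

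Now I would condition on $\boldsymbol{Y}$ and use $\ev{\|V\|_2^2 \mid \boldsymbol{Y}} = \|\ev{V \mid \boldsymbol{Y}}\|_2^2 + \trace{\cov{V \mid \boldsymbol{Y}}}$ with $V \coloneqq (\boldsymbol{Z}^\top\boldsymbol{Z})^{-1}\boldsymbol{Z}^\top\boldsymbol{E}$. The mean term is exactly the second summand in \eqref{eq:exp_approx_error_1d}. For the variance term, since the samples are independent and $E_i$ is a function of $(X_i,Y_i)$ alone, the matrix $\cov{\boldsymbol{E}\mid\boldsymbol{Y}}$ is diagonal with $i$th diagonal entry $\cov{E\mid Y}\big|_{Y=Y_i} \leq \sigma^2$, so $\cov{\boldsymbol{E}\mid\boldsymbol{Y}} \leq \sigma^2 I_n$; together with $(\boldsymbol{Z}^\top\boldsymbol{Z})^{-1}\boldsymbol{Z}^\top\boldsymbol{Z}(\boldsymbol{Z}^\top\boldsymbol{Z})^{-1} = (\boldsymbol{Z}^\top\boldsymbol{Z})^{-1}$ this gives $\trace{\cov{V\mid\boldsymbol{Y}}} \leq \sigma^2 \trace{(\boldsymbol{Z}^\top\boldsymbol{Z})^{-1}}$. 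Taking the expectation over $\boldsymbol{Y}$ yields \eqref{eq:exp_approx_error_1d}.

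For the equality claim, equality forces $\trace{M\,G\,M^\top}=0$ almost surely, where $M \coloneqq (\boldsymbol{Z}^\top\boldsymbol{Z})^{-1}\boldsymbol{Z}^\top$ and $G \coloneqq \sigma^2 I_n - \cov{\boldsymbol{E}\mid\boldsymbol{Y}} \geq 0$; since $M G M^\top = (M G^{1/2})(M G^{1/2})^\top \geq 0$ has trace zero, it vanishes, hence $M G^{1/2}=0$. As $G$, and therefore $G^{1/2}$, is diagonal, this means $\sqrt{G_{ii}}\,(\boldsymbol{Z}^\top\boldsymbol{Z})^{-1} Z_i = 0$ for every $i$, where $Z_i = C_{YY}^{-1/2} Y_i$ is the $i$th row of $\boldsymbol{Z}$. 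Since $Y$ has a density, $Y_i \neq 0$ almost surely, so $(\boldsymbol{Z}^\top\boldsymbol{Z})^{-1} Z_i \neq 0$ and thus $G_{ii}=0$; recalling that $Y_1$ is a copy of $Y$, this gives $\cov{E\mid Y}=\sigma^2$ almost surely, and the converse is immediate. The one genuinely delicate point is this equality characterization, where the continuity of the law of $Y$ is needed to exclude degenerate rows of $\boldsymbol{Z}$; the inequality itself is a routine tower-property computation.
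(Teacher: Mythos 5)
Your proof is correct, and its skeleton matches the paper's: both reduce the out-of-sample error to $\ev{\|(\boldsymbol{Z}^\top \boldsymbol{Z})^{-1}\boldsymbol{Z}^\top\boldsymbol{E}\|_2^2}$ (your identity $C_{YY}^{1/2}(\hat{\theta}-\theta_*)=(\boldsymbol{Z}^\top\boldsymbol{Z})^{-1}\boldsymbol{Z}^\top\boldsymbol{E}$ is just a repackaging of the trace-trick chain built on \eqref{eq:diff_lstsq_LMMSE_stimator}), then condition on $\boldsymbol{Y}$ and split the conditional second moment of $\boldsymbol{E}$ into covariance and mean parts, using that $\cov{\boldsymbol{E}\mid\boldsymbol{Y}}$ is diagonal with entries bounded by $\sigma^2$. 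Where you genuinely diverge is in the two technical devices. For the variance term the paper applies von Neumann's trace inequality to $\cov{\boldsymbol{E}\mid\boldsymbol{Y}}$ and $\boldsymbol{Z}(\boldsymbol{Z}^\top\boldsymbol{Z})^{-2}\boldsymbol{Z}^\top$ in \eqref{eq:irr_error_decomp}, whereas you only need the Loewner monotonicity $\cov{\boldsymbol{E}\mid\boldsymbol{Y}}\preceq\sigma^2\id \Rightarrow \trace{M\,\cov{\boldsymbol{E}\mid\boldsymbol{Y}}\,M^\top}\leq\sigma^2\trace{M M^\top}$ together with $\trace{M M^\top}=\trace{(\boldsymbol{Z}^\top\boldsymbol{Z})^{-1}}$ for $M=(\boldsymbol{Z}^\top\boldsymbol{Z})^{-1}\boldsymbol{Z}^\top$ --- more elementary and entirely sufficient. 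For the equality characterization the paper appeals to the equality cases of the two relations in \eqref{eq:irr_error_decomp}; your argument is more explicit: from $\trace{M G M^\top}=0$ almost surely you get $M G^{1/2}=0$, and the standing density assumption (so $Y_i\neq 0$ a.s., hence the rows of $\boldsymbol{Z}$ are nonzero and $(\boldsymbol{Z}^\top\boldsymbol{Z})^{-1}Z_i\neq 0$) forces every diagonal entry of $G$ to vanish. This is a welcome sharpening: the von Neumann route is delicate at exactly this point, since $\boldsymbol{Z}(\boldsymbol{Z}^\top\boldsymbol{Z})^{-2}\boldsymbol{Z}^\top$ has $n-M$ zero eigenvalues, so equality in the eigenvalue bound alone does not immediately pin down all diagonal entries of $\cov{\boldsymbol{E}\mid\boldsymbol{Y}}$, whereas your argument makes transparent where the continuity-of-the-law assumption is used. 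The only caveat, shared equally by the paper's proof, is that the ``only if'' part of the equality statement is meaningful only when $\ev{\trace{(\boldsymbol{Z}^\top\boldsymbol{Z})^{-1}}}<\infty$, since otherwise both sides of \eqref{eq:exp_approx_error_1d} can be infinite regardless of $\cov{E\mid Y}$.
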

\begin{proof}
We calculate
\begin{equation}\label{eq:init_error_decomp}
\begin{split}
\ev{ | \hat{\theta}^\top Y - \theta_*^\top Y |^2 } 
	&= \ev{\boldsymbol{E}^\top \boldsymbol{Y}(\boldsymbol{Y}^\top \boldsymbol{Y})^{-1}  Y Y^\top   		(\boldsymbol{Y}^\top \boldsymbol{Y})^{-1} \boldsymbol{Y}^\top \boldsymbol{E} } \\ 
	&=  \ev_{\boldsymbol{Y}}{ \trace{ \ev_{\boldsymbol{E}}{\boldsymbol{E} \boldsymbol{E}^\top\mid 	    \boldsymbol{Y} }  \boldsymbol{Y} (\boldsymbol{Y}^\top \boldsymbol{Y})^{-1} \ev_Y{Y Y^\top}         	(\boldsymbol{Y}^\top \boldsymbol{Y})^{-1} \boldsymbol{Y}^\top }} \\
		&= \ev_{\boldsymbol{Y}}{\trace{ \ev{\boldsymbol{E} \boldsymbol{E}^\top \mid \boldsymbol{Y} } \boldsymbol{Y} 	   		(\boldsymbol{Y}^\top \boldsymbol{Y})^{-1} C_{YY} (\boldsymbol{Y}^\top \boldsymbol{Y})^{-1} 			\boldsymbol{Y}^\top }} \\
		&= \ev_{\boldsymbol{Y}}{\trace{ \ev{\boldsymbol{E} \boldsymbol{E}^\top \mid \boldsymbol{Y} } \boldsymbol{Z} (\boldsymbol{Z}^\top \boldsymbol{Z})^{-2} \boldsymbol{Z}^\top }} \\		
		&= \ev_{\boldsymbol{Y}}{\trace{ \cov{\boldsymbol{E} \mid \boldsymbol{Y} } \boldsymbol{Z} 	   		(\boldsymbol{Z}^\top \boldsymbol{Z})^{-2} \boldsymbol{Z}^\top }} \\		
		&\quad + \ev_{\boldsymbol{Y}}{\trace{ \ev{\boldsymbol{E}^\top \mid \boldsymbol{Y} } \boldsymbol{Z} (\boldsymbol{Z}^\top \boldsymbol{Z})^{-2} \boldsymbol{Z}^\top \ev{\boldsymbol{E} \mid \boldsymbol{Y} }}}. \\
\end{split}
\end{equation}	
The first identity is by the trace trick and the identity in \eqref{eq:diff_lstsq_LMMSE_stimator}. The second identity is by the law of iterated expectations. The third identity holds because $\ev_Y{Y Y^\top} = C_{YY}$. The fourth identity is true since $\boldsymbol{Y} (\boldsymbol{Y}^\top \boldsymbol{Y})^{-1} C_{YY} (\boldsymbol{Y}^\top \boldsymbol{Y})^{-1} \boldsymbol{Y}^\top = \boldsymbol{Z} (\boldsymbol{Z}^\top \boldsymbol{Z})^{-2} \boldsymbol{Z}^\top$. The fifth identity follows from the identity $\cov{\boldsymbol{E} \mid \boldsymbol{Y}} = \ev{\boldsymbol{E}^\top \boldsymbol{E} \mid \boldsymbol{Y}} - \ev{\boldsymbol{E}^\top \mid \boldsymbol{Y}} \ev{\boldsymbol{E}^\top \mid \boldsymbol{Y}} $ and the cyclic property of the trace. For the second to last expected value in \eqref{eq:init_error_decomp}, we have
	\begin{multline}\label{eq:irr_error_decomp}
	\ev_{\boldsymbol{Y}}{\trace{ \cov{\boldsymbol{E} \mid \boldsymbol{Y} } \boldsymbol{Z} 	   		(\boldsymbol{Z}^\top \boldsymbol{Z})^{-2} \boldsymbol{Z}^\top }}  \leq \ev_{\boldsymbol{Y}}{ \sum_{i=1}^n \lambda_i(\cov{\boldsymbol{E} \mid \boldsymbol{Y} })  		\lambda_i\left( \boldsymbol{Z} (\boldsymbol{Z}^\top \boldsymbol{Z})^{-2} \boldsymbol{Z}^\top \right) } 
	\\ \leq \sigma^2 \ev{ \sum_{i=1}^n  \lambda_i\left(\boldsymbol{Z} (\boldsymbol{Z}^		\top \boldsymbol{Z})^{-2}  \boldsymbol{Z}^\top\right) }  
	= \sigma^2 \ev{ \trace{ (\boldsymbol{Z}^\top \boldsymbol{Z})^{-1} }},
\end{multline} 
where $\lambda_1(B) \leq \cdots \leq \lambda_k(B)$ denote the eigenvalues of a symmetric positive semidefinite matrix $B \in \mathbb{R}^{k \times k}$. The first relation in \eqref{eq:irr_error_decomp} is by von Neumann's trace inequality \cite[Theorem 7.4.1.1 on p. 458]{horn2012matrix}. The second relation holds because $\cov{\boldsymbol{E} \mid \boldsymbol{Y} }$ is a diagonal matrix with diagonal entries bounded by $\sigma^2$. The third relation is valid since the nonzero eigenvalues of $(\boldsymbol{Z}^\top \boldsymbol{Z})^{-1}$ and $\boldsymbol{Z} (\boldsymbol{Z}^\top \boldsymbol{Z})^{-2} \boldsymbol{Z}^\top$ are identical. Using the identity $\trace{V V^\top} = \| V \|_2^2$ for $V \coloneqq (\boldsymbol{Z}^\top \boldsymbol{Z})^{-2} \boldsymbol{Z}^\top \ev{\boldsymbol{E} \mid \boldsymbol{Y}}$, for the last expected value in \eqref{eq:init_error_decomp}, we obtain that 
\begin{equation}\label{eq:miss_error_decomp}
\ev_{\boldsymbol{Y}}{\trace{ \ev{\boldsymbol{E}^\top \mid \boldsymbol{Y} } \boldsymbol{Z} (\boldsymbol{Z}^\top \boldsymbol{Z})^{-2} \boldsymbol{Z}^\top \ev{\boldsymbol{E} \mid \boldsymbol{Y} }}} = \ev_{\boldsymbol{Y}}{\|(\boldsymbol{Z}^\top \boldsymbol{Z})^{-1} \boldsymbol{Z}^\top \ev{\boldsymbol{E} \mid \boldsymbol{Y}} \|_2^2 }.
\end{equation}
Together, \crefrange{eq:init_error_decomp}{eq:miss_error_decomp} yield \eqref{eq:exp_approx_error_1d}. To complete the proof, we note that the first and second relations in \eqref{eq:irr_error_decomp} are equalities if and only if 
$\cov{\boldsymbol{E} \mid \boldsymbol{Y} } = \sigma^2 \id$, and that the latter condition is equivalent to the condition that $\cov{E \mid Y} = \sigma^2$ almost surely.
\end{proof}

We write $G \preceq H$ for two matrices $G, H \in \mathbb{R}^{N \times N}$ if $H-G$ is positive semidefinite. 

\begin{thm}[Expected approximation error]\label{prop:expected_out_of_sample}
Let $C \in \mathbb{R}^{N \times N}$ be such that $\cov{E \mid Y} \preceq C$ almost surely. Moreover, suppose that $\boldsymbol{Z} \coloneqq \boldsymbol{Y} C_{YY}^{-1/2}$ is almost surely injective. Then the least squares estimator is almost surely unique and
\begin{equation}\label{eq:exp_approx_error}
\ev{ \| \hat{\Theta}^\top Y - \Theta_*^\top Y \|_2^2 } \leq \trace{C} \ev{ \trace{ (\boldsymbol{Z}^\top \boldsymbol{Z})^{-1} }} + \ev_{\boldsymbol{Y}}{\|(\boldsymbol{Z}^\top \boldsymbol{Z})^{-1} \boldsymbol{Z}^\top \ev{\boldsymbol{E} \mid \boldsymbol{Y}} \|_F^2 }. \\
\end{equation}
The relation in \eqref{eq:exp_approx_error} is an equality if and only if $\diag(\cov{E \mid Y}) = \diag(C)$ almost surely.
\end{thm}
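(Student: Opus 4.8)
The plan is to obtain \Cref{prop:expected_out_of_sample} from the one-dimensional \Cref{thm:expected_approx_1d} by splitting the response $X \in \mathbb{R}^N$ into its $N$ scalar coordinates, applying \Cref{thm:expected_approx_1d} to each, and summing, using the linearity of the expectation operator. For $j \in \{1, \dots, N\}$ let $e_j$ be the $j$-th standard basis vector of $\mathbb{R}^N$, and consider the regression problem with scalar response $e_j^\top X$ and data $Y$. Its LMMSE coefficient vector is the $j$-th column $\Theta_* e_j \in \mathbb{R}^M$ of $\Theta_*$, its estimation error is $e_j^\top E = e_j^\top X - (\Theta_* e_j)^\top Y$, and---because $\|\boldsymbol{Y}\Theta - \boldsymbol{X}\|_F^2 = \sum_{j=1}^N \|\boldsymbol{Y}\Theta e_j - \boldsymbol{X} e_j\|_2^2$ together with $\boldsymbol{E} = \boldsymbol{X} - \boldsymbol{Y}\Theta_*$---the least squares problem \eqref{eq:EMSE_Theta} decouples into $N$ one-dimensional least squares problems whose $j$-th minimizer is $\hat\Theta e_j$, the $j$-th column of $\hat\Theta$, with error samples $\boldsymbol{E} e_j = \boldsymbol{X} e_j - \boldsymbol{Y}\Theta_* e_j$. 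Since $C_{YY}^{-1/2}$ is invertible, $\boldsymbol{Z} = \boldsymbol{Y} C_{YY}^{-1/2}$ is injective if and only if $\boldsymbol{Y}$ is; hence almost sure injectivity of $\boldsymbol{Z}$ makes each of these one-dimensional problems, and therefore also \eqref{eq:EMSE_Theta}, almost surely uniquely solvable.

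Next I check the hypothesis of \Cref{thm:expected_approx_1d} for the $j$-th coordinate problem: testing the matrix inequality $\cov{E \mid Y} \preceq C$ against $e_j$ gives $\cov{e_j^\top E \mid Y} = e_j^\top \cov{E \mid Y} e_j \leq e_j^\top C e_j = C_{jj}$ almost surely, so $\sigma \coloneqq \sqrt{C_{jj}}$ is an admissible variance bound (and $C_{jj} \geq 0$, since $C$ is positive semidefinite, being an upper bound for a covariance matrix). The proof of \Cref{thm:expected_approx_1d} uses only \eqref{eq:diff_lstsq_LMMSE_stimator} applied one column at a time, the identity $\ev_Y{Y Y^\top} = C_{YY}$ for a test point $Y$ independent of the samples, almost sure injectivity of $\boldsymbol{Z}$, and this variance bound, none of which relies on a particular relation between the scalar response and $Y$; so it applies to the $j$-th coordinate problem verbatim and yields
\begin{equation*}
\ev{|(\hat\Theta e_j)^\top Y - (\Theta_* e_j)^\top Y|^2} \leq C_{jj}\, \ev{\trace{(\boldsymbol{Z}^\top \boldsymbol{Z})^{-1}}} + \ev_{\boldsymbol{Y}}{\|(\boldsymbol{Z}^\top \boldsymbol{Z})^{-1} \boldsymbol{Z}^\top \ev{\boldsymbol{E} e_j \mid \boldsymbol{Y}}\|_2^2},
\end{equation*}
with equality if and only if $\cov{e_j^\top E \mid Y} = C_{jj}$ almost surely. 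Here I use that $\boldsymbol{Z}$ does not depend on $j$.

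Finally I sum these $N$ inequalities and invoke linearity of expectation. On the left, $\sum_{j=1}^N |(\hat\Theta e_j)^\top Y - (\Theta_* e_j)^\top Y|^2 = \|\hat\Theta^\top Y - \Theta_*^\top Y\|_2^2$. On the right, $\sum_{j=1}^N C_{jj} = \trace{C}$, and since the $j$-th column of $(\boldsymbol{Z}^\top \boldsymbol{Z})^{-1} \boldsymbol{Z}^\top \ev{\boldsymbol{E} \mid \boldsymbol{Y}} \in \mathbb{R}^{M \times N}$ equals $(\boldsymbol{Z}^\top \boldsymbol{Z})^{-1} \boldsymbol{Z}^\top \ev{\boldsymbol{E} e_j \mid \boldsymbol{Y}}$, the sum of the squared Euclidean norms of its columns is $\|(\boldsymbol{Z}^\top \boldsymbol{Z})^{-1} \boldsymbol{Z}^\top \ev{\boldsymbol{E} \mid \boldsymbol{Y}}\|_F^2$. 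This gives \eqref{eq:exp_approx_error}. For the equality characterization, a sum of the $N$ inequalities above is tight exactly when each of them is, i.e.\ when $\cov{e_j^\top E \mid Y} = C_{jj}$ almost surely for every $j$; since there are finitely many $j$, this is equivalent to $\diag(\cov{E \mid Y}) = \diag(C)$ almost surely.

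I expect no genuine obstacle: once \Cref{thm:expected_approx_1d} is available the argument is bookkeeping, the only slightly delicate points being the passage from the matrix inequality $\cov{E \mid Y} \preceq C$ to the scalar bounds $\cov{e_j^\top E \mid Y} \leq C_{jj}$ and the reassembly of the equality condition from its coordinatewise versions.
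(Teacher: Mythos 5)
Your proposal is correct and follows essentially the same route as the paper: decompose $\|\hat{\Theta}^\top Y - \Theta_*^\top Y\|_2^2$ into its $N$ scalar coordinates, apply \Cref{thm:expected_approx_1d} to each coordinate problem (with the column of $\Theta_*$, respectively $\hat\Theta$, as the one-dimensional coefficient vector and $\sigma^2 = C_{jj}$ obtained by testing $\cov{E \mid Y} \preceq C$ against $e_j$), and sum using linearity of expectation, recovering $\trace{C}$, the Frobenius norm of the bias term, and the coordinatewise equality characterization. The paper's proof is just a terser statement of this same argument; your additional bookkeeping (decoupling of the least squares problem, uniqueness, and the reassembly of the equality condition) is accurate.
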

\begin{proof}
We let $\hat{\theta}^i$ and $\theta_*^i$ denote the $i$th row of $\hat{\Theta}$ and $\Theta_*$ for $1 \leq i \leq N$. Then 
\begin{equation}\label{eq:error_sum_comp}
\ev{ \| \hat{\Theta}^\top Y - \Theta_*^\top Y \|_2^2 } = \ev{ \sum_{i=1}^N | {\hat{\theta}^i}^\top Y - {\theta_*^i}^\top Y |_2^2 }.
\end{equation}
The claim follows by using the linearity of the expectation operator, and by applying \Cref{thm:expected_approx_1d} for each summand on the right-hand side of \eqref{eq:error_sum_comp}. 
\end{proof}
\Cref{prop:expected_out_of_sample} shows that the expected approximation error of the least squares estimator can be decomposed into two parts: the first part depends on the mean squared error of the MMSE estimator. The second part depends on the distance of the LMMSE to the MMSE estimators. Notice the similarity of the error decomposition in \eqref{eq:exp_approx_error} to the error decomposition in \eqref{eq:pythagoras}.

\begin{rem}[Expected mean squared error for the Gaussian model]\label{rem:expected_error_Gaussian_model}
The consequences of \Cref{prop:expected_out_of_sample} for the Gaussian model are well-known. 
For the Gaussian model, by \Cref{prop:posterior_gaussian_model}, we have $\ev{E \mid Y} = 0$; hence, the second expected value in the error decomposition \eqref{eq:exp_approx_error} vanishes. Moreover, by \Cref{prop:posterior_gaussian_model}, we can choose $C = C_{EE}$. We observe that $\boldsymbol{Z}^\top \boldsymbol{Z}$ has a Wishart distribution with the identity as a scale matrix and degrees of freedom parameter $n$ (see \cite[Definition 3.4.1 on p.~66]{Mardia1979}). Thus, by \cite[Lemma 7.7.1 on p.~273]{anderson2003introduction}, if $n > M+1$, then $\ev{(\boldsymbol{Z}^\top \boldsymbol{Z})^{-1}} = (n-M-1)^{-1} \id $, where $\id$ is the $M \times M$ identity matrix. Since the trace operator is linear and the trace of the $M \times M$ identity matrix is $M$, it follows that
$
\ev{ \trace{ (\boldsymbol{Z}^\top \boldsymbol{Z})^{-1} }} = M/(n-M-1).
$
Hence,
\begin{equation}\label{eq:approx_error_gauss}
\ev{ \| \hat{\Theta}^\top Y - \Theta_*^\top Y \|_2^2 } = \trace{C_{EE}} M/(n-M-1).
\end{equation}
By combining the identity in \eqref{eq:approx_error_gauss} with the error decomposition in \eqref{MSE_decompostion_linear}, we deduce that for the Gaussian model the answer to \Cref{quest:avg_err} is $n=\lceil{M/\varepsilon\rceil} + M + 1$.

\end{rem}

The inverse of the matrix $\boldsymbol{Z}^\top \boldsymbol{Z}$ plays an important role in \Cref{prop:expected_out_of_sample}. In \Cref{rem:expected_error_Gaussian_model}, we have seen that for the Gaussian model the expected value of its trace is equal to $M/(n-M-1)$. Next, we derive a lower bound for the expected value of its trace for the general model. This lower bound was already used in the proof of \cite[Theorem 1]{rosset2019fixed}.
\begin{prop}
Assume that the random vector $\zeta$ takes values in $\mathbb{R}^M$, has mean zero and the identity as a covariance matrix, and has a density with respect to the Lebesgue measure. Let $n \geq M$ and assume that the rows of $\boldsymbol{Z} \in \mathbb{R}^{n \times M}$ are independent copies of $\zeta$. Then $\boldsymbol{Z}^\top \boldsymbol{Z}$ is almost surely invertible and 
\begin{equation}\label{eq:lower_bound_n_samples}
\ev{ \trace{ (\boldsymbol{Z}^\top \boldsymbol{Z})^{-1} }} \geq M/n.
\end{equation}
\end{prop}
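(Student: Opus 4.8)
The plan is to deduce \eqref{eq:lower_bound_n_samples} from the convexity of the map $S \mapsto \trace{S^{-1}}$ on the cone of symmetric positive definite matrices, combined with the identity $\ev{\boldsymbol{Z}^\top \boldsymbol{Z}} = n \id$. First I would check that $\boldsymbol{Z}^\top \boldsymbol{Z}$ is almost surely invertible: since $\zeta$ has a density with respect to the Lebesgue measure on $\mathbb{R}^M$, every fixed proper linear subspace of $\mathbb{R}^M$ has $\zeta$-probability zero, so by induction on the rows the first $M$ rows of $\boldsymbol{Z}$ are almost surely linearly independent; as $n \geq M$, it follows that $\boldsymbol{Z}$ has full column rank and $\boldsymbol{Z}^\top \boldsymbol{Z}$ is almost surely positive definite. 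In particular $\trace{(\boldsymbol{Z}^\top \boldsymbol{Z})^{-1}}$ is almost surely well defined; if it fails to be integrable, then \eqref{eq:lower_bound_n_samples} is trivially true, so we may assume it is.

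The key tool is the variational identity $e_j^\top S^{-1} e_j = \max_{v \in \mathbb{R}^M}\bigl(2 v^\top e_j - v^\top S v\bigr)$, valid for every symmetric positive definite $S \in \mathbb{R}^{M \times M}$ and every standard basis vector $e_j$ of $\mathbb{R}^M$, with the maximum attained at $v = S^{-1} e_j$ (one obtains this by setting the gradient of $v \mapsto v^\top S v - 2 v^\top e_j$ to zero). Summing over $j = 1, \dots, M$ gives, for arbitrary fixed vectors $v_1, \dots, v_M \in \mathbb{R}^M$, the pointwise lower bound $\trace{S^{-1}} \geq \sum_{j=1}^M \bigl(2 v_j^\top e_j - v_j^\top S v_j\bigr)$; equivalently, this exhibits $S \mapsto \trace{S^{-1}}$ as a supremum of affine functions of $S$, hence as a convex function.

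Finally I would set $S = \boldsymbol{Z}^\top \boldsymbol{Z}$ in the pointwise bound, take expectations, and use $\ev{\boldsymbol{Z}^\top \boldsymbol{Z}} = \sum_{i=1}^n \ev{\zeta \zeta^\top} = n \id$ (which holds because $\zeta$ has zero mean and covariance $\id$), to obtain $\ev{\trace{(\boldsymbol{Z}^\top \boldsymbol{Z})^{-1}}} \geq \sum_{j=1}^M \bigl(2 v_j^\top e_j - n\|v_j\|_2^2\bigr)$ for every choice of $v_1, \dots, v_M$. Choosing $v_j = e_j/n$ makes the $j$th summand equal $2/n - 1/n = 1/n$, and summing over $j$ yields the claimed bound $M/n$. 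I do not expect any serious obstacle here; the only points needing a little care are the almost-sure invertibility of $\boldsymbol{Z}^\top \boldsymbol{Z}$ and the (routine) integrability bookkeeping in passing the expectation through the fixed-$v$ bound. Alternatively, one may simply invoke Jensen's inequality for the convex function $S \mapsto \trace{S^{-1}}$ applied to the random matrix $\boldsymbol{Z}^\top \boldsymbol{Z}$ with mean $n\id$, which gives $\ev{\trace{(\boldsymbol{Z}^\top \boldsymbol{Z})^{-1}}} \geq \trace{(n\id)^{-1}} = M/n$ directly.
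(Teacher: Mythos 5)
Your proof is correct, and its core is the same inequality the paper exploits---Jensen/convexity of the inverse applied to $\boldsymbol{Z}^\top\boldsymbol{Z}$ together with $\ev{\boldsymbol{Z}^\top \boldsymbol{Z}} = n\,\id$---but you carry it out in a more self-contained way. The paper cites two external results: almost-sure invertibility comes from Eaton--Perlman, and the key step is the matrix-order Jensen inequality $\ev{\boldsymbol{Z}^\top\boldsymbol{Z}}^{-1} \preceq \ev{(\boldsymbol{Z}^\top\boldsymbol{Z})^{-1}}$ of Groves--Rothenberg, after which one simply takes traces. You instead prove invertibility directly by the density-and-induction argument (fixed proper subspaces are null for $\zeta$, so the first $M$ rows are a.s.\ linearly independent), and you replace the matrix Jensen inequality by the weaker scalar statement that $S \mapsto \trace{S^{-1}}$ is convex, established via the variational identity $e_j^\top S^{-1} e_j = \max_v\,(2 v^\top e_j - v^\top S v)$, which lets you take expectations through a fixed affine minorant and optimize with $v_j = e_j/n$; the integrability bookkeeping you mention is indeed routine since the entries of $\boldsymbol{Z}^\top\boldsymbol{Z}$ are integrable and the left-hand side is nonnegative. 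What your route buys is independence from the cited literature and only the scalar convexity that is actually needed; what the paper's route buys is brevity and the stronger positive-semidefiniteness statement $\ev{(\boldsymbol{Z}^\top \boldsymbol{Z})^{-1}} - \ev{\boldsymbol{Z}^\top \boldsymbol{Z}}^{-1} \succeq 0$, of which the trace bound is an immediate corollary. Your closing remark invoking Jensen for the convex map $S \mapsto \trace{S^{-1}}$ is essentially the paper's argument in scalar form, so either formulation completes the proof.
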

\begin{proof}
The invertibility of $\boldsymbol{Z}^\top \boldsymbol{Z}$ follows from \cite[Theorem 2.3]{eaton1973non}. If the expected value in \eqref{eq:lower_bound_n_samples} is infinite, the inequality in  \eqref{eq:lower_bound_n_samples} is trivially satisfied; otherwise, \begin{equation*}
\ev{ \trace{ (\boldsymbol{Z}^\top \boldsymbol{Z})^{-1} }} =  \trace{\ev{  (\boldsymbol{Z}^\top \boldsymbol{Z})^{-1} }} \geq \trace{\ev{  (\boldsymbol{Z}^\top \boldsymbol{Z}) }^{-1}} = \trace{(1/n) \id} = M/n.  
\end{equation*}
The first identity follows from the linearity of the expectation operator. The second relation holds because $\ev{(\boldsymbol{Z}^\top \boldsymbol{Z})^{-1}} -\ev{\boldsymbol{Z}^\top \boldsymbol{Z}}^{-1}$ is positive semidefinite; see \cite{groves1969}. The third identity is true because $\ev{\boldsymbol{Z}^\top \boldsymbol{Z}} = n \id$. The final identity holds because the trace of the $M \times M$ identity matrix is $M$.
\end{proof}
We now recall that under appropriate assumptions, and for $n=n(M)$ such that $\lim_{M \to \infty} M/n(M) = \gamma$ for some $\gamma \in (0,1)$, the trace of the inverse of $\boldsymbol{Z}^\top \boldsymbol{Z}$ converges almost surely to the expected value for the Gaussian model (see \Cref{rem:expected_error_Gaussian_model}) as $M$ tends to $\infty$. We refer to \cite[Theorem 2 on p.~301 and Corollary 4 on p.~303]{shiryaev2016}) for the intrinsic definition of a probability space on a sequence of random variables, where each random variable is defined on its own probability space.

\begin{thm}\label{thm:conc_trace_prec}
Let $q$ be a random variable with mean zero, unit variance, and finite fourth moment. For each $M \in \mathbb{N}$, let $\zeta(M)$ be a random vector taking values in $\mathbb{R}^M$ whose entries are independent copies of $q$. Let $n(M)$ be such that $\lim_{M \to \infty} M/n(M) = \gamma$ for some $\gamma \in (0,1)$. Let $\boldsymbol{Z}=\boldsymbol{Z}(n(M),M)$ be a random matrix in $\mathbb{R}^{n(M) \times M}$ whose rows are independent copies of $\zeta(M)$. Then
\begin{equation*}
\lim_{M \to \infty}  \trace{ (\boldsymbol{Z}^\top \boldsymbol{Z})^{-1} } = \lim_{M \to \infty}  M/(n(M)-M-1) = \gamma/(1-\gamma) \quad \text{almost surely.}
\end{equation*}
\end{thm}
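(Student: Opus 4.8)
The plan is to reduce everything to the Marchenko--Pastur theorem for the sample covariance matrix $S_M \coloneqq \tfrac{1}{n(M)}\boldsymbol{Z}^\top\boldsymbol{Z}\in\mathbb{R}^{M\times M}$, whose empirical spectral distribution $\mu_M$ (the uniform measure on its $M$ eigenvalues) is the object governed by the aspect ratio $M/n(M)\to\gamma$. First I would dispose of the elementary, deterministic part of the claim: since $\gamma>0$ and $M\to\infty$, also $n(M)\to\infty$, so $1/n(M)\to 0$ and hence
\begin{equation*}
\frac{M}{n(M)-M-1} = \frac{M/n(M)}{1 - M/n(M) - 1/n(M)} \longrightarrow \frac{\gamma}{1-\gamma}.
\end{equation*}
It then remains to show that $\trace{(\boldsymbol{Z}^\top\boldsymbol{Z})^{-1}}\to\gamma/(1-\gamma)$ almost surely. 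Writing $\boldsymbol{Z}^\top\boldsymbol{Z} = n(M)\,S_M$ gives
\begin{equation*}
\trace{(\boldsymbol{Z}^\top\boldsymbol{Z})^{-1}} = \frac{1}{n(M)}\trace{S_M^{-1}} = \frac{M}{n(M)}\cdot\frac{1}{M}\trace{S_M^{-1}} = \frac{M}{n(M)}\int \frac{1}{x}\,d\mu_M(x),
\end{equation*}
so, since $M/n(M)\to\gamma$, the whole statement reduces to proving $\int x^{-1}\,d\mu_M(x)\to(1-\gamma)^{-1}$ almost surely.

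Next I would invoke the strong Marchenko--Pastur theorem: since the entries of $\boldsymbol{Z}$ are i.i.d. with mean zero and unit variance, $\mu_M$ converges weakly, almost surely, to the Marchenko--Pastur law $\mu_\gamma$ with ratio $\gamma$; for $\gamma<1$ this law is supported on $[(1-\sqrt{\gamma})^2,(1+\sqrt{\gamma})^2]$, an interval bounded away from the origin. Weak convergence alone does \emph{not} give convergence of $\int x^{-1}\,d\mu_M$, because $x\mapsto 1/x$ is unbounded near $0$ and $\mu_M$ could a priori place mass near the origin; this interchange is the one genuine obstacle in the proof, and it is precisely where the finite fourth moment hypothesis is used. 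Indeed, by the Bai--Yin theorem, finite fourth moment and $\gamma<1$ imply $\lambda_{\min}(S_M)\to(1-\sqrt{\gamma})^2>0$ almost surely. Fixing $c\coloneqq(1-\sqrt{\gamma})^2/2>0$, we obtain that, almost surely, the eigenvalues of $S_M$ all exceed $c$ for every sufficiently large $M$; in particular $\boldsymbol{Z}^\top\boldsymbol{Z}$ is almost surely invertible for large $M$ (note that no density of $q$ is assumed here).

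On that almost sure event I would compare $\int x^{-1}\,d\mu_M$ with $\int g\,d\mu_M$, where $g(x)\coloneqq 1/\max(x,c)$ is bounded and continuous on $\mathbb{R}$: for $M$ large the spectrum of $S_M$ lies in $[c,\infty)$, so $\int g\,d\mu_M = \int x^{-1}\,d\mu_M$; and the support of $\mu_\gamma$ is contained in $[c,\infty)$, so $\int g\,d\mu_\gamma = \int x^{-1}\,d\mu_\gamma$. Applying weak convergence $\mu_M\to\mu_\gamma$ to the bounded continuous function $g$ then yields $\int x^{-1}\,d\mu_M\to\int x^{-1}\,d\mu_\gamma$ almost surely.

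Finally I would evaluate $\int x^{-1}\,d\mu_\gamma$. Since $\mu_\gamma$ is supported away from $0$, its Stieltjes transform $s_\gamma(z)=\int(x-z)^{-1}\,d\mu_\gamma(x)$ is finite at $z=0$ and equals $\int x^{-1}\,d\mu_\gamma(x)$; moreover $s_\gamma$ satisfies the Marchenko--Pastur fixed-point relation $\gamma z\,s_\gamma(z)^2 + (z+\gamma-1)\,s_\gamma(z) + 1 = 0$, which at $z=0$ collapses to $(\gamma-1)\,s_\gamma(0)+1=0$, that is, $\int x^{-1}\,d\mu_\gamma = s_\gamma(0) = (1-\gamma)^{-1}$. (As a consistency check, this value matches the exact Wishart computation $\ev{\trace{(\boldsymbol{Z}^\top\boldsymbol{Z})^{-1}}} = M/(n-M-1)$ recalled in \Cref{rem:expected_error_Gaussian_model} together with the universality of the limiting law $\mu_\gamma$.) Combining the pieces, $\trace{(\boldsymbol{Z}^\top\boldsymbol{Z})^{-1}} = \tfrac{M}{n(M)}\int x^{-1}\,d\mu_M \to \gamma\cdot(1-\gamma)^{-1}$ almost surely, which together with the deterministic limit above gives the claim. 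The hard part is legitimizing the passage of the limit through the singular integrand $1/x$; once the Bai--Yin edge estimate (which consumes the fourth-moment assumption) is available, the remaining steps are routine.
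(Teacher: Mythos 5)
Your argument is correct and follows essentially the same route as the paper, which simply cites \cite[Proposition 2 and its proof]{hastie2019surprises} — a proof that, as noted in \Cref{rem:marchenko-pastur}, is exactly the combination of the strong Marchenko--Pastur law with the Bai--Yin smallest-eigenvalue theorem (the latter consuming the fourth-moment hypothesis) that you spell out, including the truncation of $1/x$ away from the origin and the evaluation $\int x^{-1}\,d\mu_\gamma=(1-\gamma)^{-1}$. The only difference is that you write out in full the details the paper delegates to the reference, and your steps check out.
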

\begin{proof}
See \cite[Proposition 2 and its proof]{hastie2019surprises}. 
\end{proof}

The independence assumption on the entries of $\bold{Z}$ in \Cref{thm:conc_trace_prec} is restrictive, however, we expect that it can be weakened to include particular cases of interest; see \Cref{rem:marchenko-pastur}. In \Cref{prop:Gaussian_error_converges_in_p}, we combine \Cref{thm:conc_trace_prec} with a non-asymptotic two-sided tail bound to show that for $n = n(M)$ such that $\lim_{M \to \infty} M/n(M) = \gamma$ the approximation error for the Gaussian model converges to $\trace{C_{EE}} \gamma/(1-\gamma)$ in probability as $M \to \infty$.

\begin{rem}\label{rem:marchenko-pastur}
The assumptions about the entries of $\bold{Z}$ in \Cref{thm:conc_trace_prec} hold for the Gaussian model, since there the entries of $\bold{Z}$ are independent standard normal random variables. In general, however, the independence assumption on the columns of $\bold{Z}$ is restrictive. The proof of \Cref{thm:conc_trace_prec} in \cite[Proposition 2]{hastie2019surprises} consists of a combination of two results: the Marchenko--Pastur Law \cite{marvcenko1967distribution} and the Bai--Yin theorem \cite{bai1993limit}. Since there exist variations of both of these results that hold under assumptions that are weaker than the assumptions in \Cref{thm:conc_trace_prec}---see \eg \cite[Theorem 2.1]{yaskov2016necessary}, and \cite[Corollary 3.1]{yaskov2016controlling} and \cite{chafai2018convergence}---we expect that the assumptions in \Cref{thm:conc_trace_prec} can be  weakened. We are currently working on an extension of \Cref{thm:conc_trace_prec} which uses the typical structure of $\bold{Z}$ in an inverse problem with additive noise. An extension of the Marchenko--Pastur law to random matrices with independent rows and blockwise independent columns is provided in \cite{bryson2019marchenko}. 

\end{rem}

\section{Tail bounds for the mean squared error}\label{sec:non-asymptotic-mse}
We derive non-asymptotic tail bounds for the approximation error under sub-Gaussian conditions. We decided to work with sub-Gaussian variables for three reasons: first, the fact that Gaussians are sub-Gaussian (\Cref{rem:sub-Gaussian-random-variables} \ref{item:gaussian_sub_gaussian}) allows us to compare our results with the benchmark in \eqref{eq:formula_n_gaussian}. Second, the class of sub-Gaussian variables is reasonably general since it contains, for example, all bounded random variables (\Cref{rem:sub-Gaussian-random-variables} \ref{item:hoeffding_lemma}). Third, sub-Gaussian variables are convenient to work with since, as we will see, many powerful results about sub-Gaussian variables are available in the literature.
\begin{rem}
For $N=1$, the distribution of the approximation error for the Gaussian model is known: the rescaled approximation error $n \sigma^{-2} \ev_Y{ |\hat{\theta}^\top Y - \theta_*^\top Y |^2 }$ follows Hotelling's $T$-squared distribution with dimensionality parameter $M$ and degrees of freedom $n$, where $\sigma^2 \coloneqq \cov{E}$;  see \cite[Theorem~1.3 and its proof]{breiman1983many}. In this case, exact confidence intervals for the error can be obtained by using the fact that if $V$ is distributed according to Hotelling's $T$-squared distribution with dimensionality parameter $M$ and degrees of freedom $n$, then $V (n-M+1)/(Mn) $ follows an F-distribution (see \cite[Theorem 3.5.2 on p.~74]{Mardia1979}).
\end{rem}

\subsection{Sub-Gaussian random variables}
We recall the definition and some properties of sub-Gaussian variables. A real-valued random variable $V$ is called {sub-Gaussian} if there exists $\sigma \geq 0$ such that
\begin{equation}\label{eq:sub-gaussian_condition}
\ev{e^{\lambda(V - \mu)}} \leq e^{\frac{\sigma^2 \lambda^2}{2}} \quad \text{for all }\lambda \in \mathbb{R}.
\end{equation}
The constant $\sigma$ in \eqref{eq:sub-gaussian_condition} is called a sub-Gaussian parameter of $V$. A random variable $V$ that is sub-Gaussian with parameter $\sigma$ satisfies, for all $t \geq 0$, the concentration inequality $ 
\mathbb{P}(|V - \ev{V}| \geq  t ) \leq  2 \exp\left(-t^2/(2 \sigma^2)\right)
$;
see \cite[p.~23]{wainwright2019high}.
\begin{rem}\label{rem:sub-Gaussian-random-variables}  The following properties hold: \hfill
\begin{enumerate}[label=\roman*)]
\item If a random variable $V$ is sub-Gaussian with parameter $\sigma$, then $\cov{V} \leq \sigma^2$; see \cite[Lemma~1.2 on p.~3]{buldygin2000metric}. \label{item:cov_smaller_sub-Gaussian}
\item A Gaussian random variable with variance $\sigma^2$ is sub-Gaussian with parameter $\sigma$; see \cite[Example~2.1 on p.~22]{wainwright2019high}. \label{item:gaussian_sub_gaussian}
\item A random variable $V$ that is uniformly distributed on the interval $[a,b]$, $a<b$, is sub-Gaussian with parameter $\sigma = (b-a)/\sqrt{12}$; see \cite[Section~4.3]{arbel2020strict}.\label{item:unif_dist_strictly_sub}

\item A random variable that is supported in the intervall $[a,b]$ is sub-Gaussian with parameter $\sigma = (b-a)/2$; see \cite[Lemma~2.6 on p.~21]{massart2007concentration}.\label{item:hoeffding_lemma}
\end{enumerate}
\end{rem}
We now extend the definition of sub-Gaussians to random vectors (see \cite[p.~165]{wainwright2019high}).

\begin{defn}[Sub-Gaussian random vectors]
A random vector $V$ that takes values in $\mathbb{R}^N$ is called a {$\sigma$-sub-Gaussian random vector} if $c^\top V$ is a sub-Gaussian random variable with parameter $\sigma$ for every $c \in \mathbb{R}^N$ with $\| c \|_2=1$. 
\end{defn}
It is easy to see that $V$ is a $\sigma$-sub-Gaussian random vector if and only if 
\begin{equation*}
\ev{ \exp(\alpha^\top (V-\ev{V})) } \leq \exp\left(\|\alpha\|_2^2 \sigma^2/2\right) \quad \text{for all } \alpha \in \mathbb{R}^n. 
\end{equation*}
Moreover, a simple calculation shows that if the entries of $V$ are independent sub-Gaussian random variables with parameter $\sigma$, then $V$ is a $\sigma$-sub-Gaussian random vector. In combination with \Cref{rem:sub-Gaussian-random-variables} \ref{item:gaussian_sub_gaussian}, this implies that a random vector whose entries are independent standard normal random variables is $1$-sub-Gaussian. Next, we recall a tail bound for quadratic forms of sub-Gaussian random vectors from \cite[Theorem 2.1]{hsu2012tail}. Thereby, $\| \cdot \|_2$ returns the spectral norm if evaluated for matrices, and the Euclidean norm otherwise.

\begin{thm}[Quadratic forms of sub-Gaussian vectors]\label{thm:tail_quadratic_subgaussians}
Let $H \in \mathbb{R}^{m \times n}$ and define $\Sigma \coloneqq H^\top H$. Suppose that $V \in \mathbb{R}^n$ is a $\sigma$-sub-Gaussian random vector. Then, for all $t \geq 0$, 
\begin{equation*}
\| H V \|_2^2 \leq \sigma^2 \left( \trace{\Sigma} + 2 \sqrt{\trace{\Sigma^2}t} + 2 \| \Sigma \|_2 t \right) + \| H \ev{V} \|_2^2 \left(1+2 \left(\|\Sigma\|_2^2 t /\trace{\Sigma^2} \right)^{1/2} \right) 
\end{equation*}
with probability at least $1-e^{-t}$.
\end{thm}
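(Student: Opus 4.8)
The plan is to prove the bound by the Chernoff method after linearising the quadratic form $\|HV\|_2^2$ with an auxiliary Gaussian vector. First I would separate the mean: write $V = \mu + W$ where $\mu \coloneqq \ev{V}$ and $W \coloneqq V - \mu$ is a centred $\sigma$-sub-Gaussian random vector. For a standard Gaussian vector $g \in \mathbb{R}^m$ independent of $V$ and any $s \ge 0$ one has the elementary identity $\exp(\tfrac{s}{2}\|HV\|_2^2) = \ev_{g}{\exp(\sqrt{s}\, g^\top H V)}$, since $g^\top H V$ is, conditionally on $V$, normal with mean $0$ and variance $\|HV\|_2^2$. Taking $\ev_{V}$, exchanging the order of integration (the integrand is nonnegative, so Tonelli applies), and applying the sub-Gaussian moment generating function bound $\ev{\exp(\alpha^\top W)} \le \exp(\|\alpha\|_2^2 \sigma^2 / 2)$ with $\alpha = \sqrt{s}\, H^\top g$, I would obtain
\begin{equation*}
\ev{\exp(\tfrac{s}{2}\|HV\|_2^2)} \le \ev_{g}{\exp\big(\sqrt{s}\, g^\top H\mu + \tfrac{s\sigma^2}{2}\, g^\top H H^\top g\big)} .
\end{equation*}

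Next I would compute the remaining Gaussian expectation explicitly. Diagonalise the symmetric matrix $HH^\top$ in an orthonormal basis of $\mathbb{R}^m$; its eigenvalues are the eigenvalues $\lambda_i$ of $\Sigma = H^\top H$ together with some zeros, the coordinates of $g$ in this basis are i.i.d.\ standard normal, and the coordinates $b_i$ of $H\mu$ in this basis satisfy $\sum_i b_i^2 = \|H\mu\|_2^2$ by Parseval's identity. The expectation then factorises over eigendirections into terms of the form $\ev{\exp(a g_i + \tfrac{c}{2} g_i^2)} = (1-c)^{-1/2}\exp\big(\tfrac{a^2}{2(1-c)}\big)$, which is finite precisely when $c = s\sigma^2\lambda_i < 1$. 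This yields the closed-form bound
\begin{equation*}
\ev{\exp(\tfrac{s}{2}\|HV\|_2^2)} \le \prod_i (1 - s\sigma^2\lambda_i)^{-1/2}\exp\Big(\frac{s\, b_i^2}{2(1 - s\sigma^2\lambda_i)}\Big),
\end{equation*}
valid for every $s$ with $s\sigma^2\|\Sigma\|_2 < 1$.

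The last step is a Chernoff bound with optimisation over $s$. Markov's inequality applied to $\exp(\tfrac{s}{2}\|HV\|_2^2)$ gives, for $u \ge 0$, that $\mathbb{P}(\|HV\|_2^2 \ge u)$ is at most $\exp(-\tfrac{su}{2})$ times the product above. Taking logarithms and using the elementary scalar inequalities $-\tfrac12\ln(1-x) \le \tfrac{x}{2} + \tfrac{x^2}{4(1-x)}$ and $(1 - s\sigma^2\lambda_i)^{-1} \le (1 - s\sigma^2\|\Sigma\|_2)^{-1}$, then summing over $i$ so that $\sum_i\lambda_i = \trace{\Sigma}$, $\sum_i\lambda_i^2 = \trace{\Sigma^2}$, and $\sum_i b_i^2 = \|H\ev{V}\|_2^2$, one is left with a single scalar inequality in $s$. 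Setting $u$ equal to the right-hand side of the claimed bound and choosing $s$ appropriately (so that $s\sigma^2$ is of order $1/(\|\Sigma\|_2 + \sqrt{\trace{\Sigma^2}/t}\,)$, as in \cite[Theorem~2.1]{hsu2012tail}) makes the resulting exponent at most $-t$; the linear-in-$g$ cross term produces exactly the contribution $\|H\ev{V}\|_2^2 \cdot 2(\|\Sigma\|_2^2 t/\trace{\Sigma^2})^{1/2}$.

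I expect the bookkeeping in this final optimisation to be the main obstacle: one must pick $s$ so that the variance term $2\sqrt{\trace{\Sigma^2}\,t}$, the spectral term $2\|\Sigma\|_2 t$, and the cross term all come out with exactly the stated constants, while verifying that the constraint $s\sigma^2\|\Sigma\|_2 < 1$ holds at the chosen $s$. The degenerate case $H = 0$ (equivalently $\trace{\Sigma^2} = 0$) is trivial and can be dispatched separately. Up to the treatment of the mean $\ev{V}$, this is the argument of \cite[Theorem~2.1]{hsu2012tail}.
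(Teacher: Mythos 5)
Your proposal is correct and matches the paper's treatment of this statement: the paper gives no proof of its own but recalls the result from \cite[Theorem~2.1]{hsu2012tail}, and your Gaussian-linearisation/Chernoff argument is precisely the proof of that cited theorem (including the separate handling of the mean via the cross term). The bookkeeping you flag does close exactly: writing $\eta=s/2$ and choosing $\eta=\bigl(2\sigma^2\|\Sigma\|_2+\sigma^2\sqrt{\trace{\Sigma^2}/t}\bigr)^{-1}$, the Bernstein-type exponent evaluates to exactly $-t$ while $(1-2\eta\sigma^2\|\Sigma\|_2)^{-1}=1+2\bigl(\|\Sigma\|_2^2\,t/\trace{\Sigma^2}\bigr)^{1/2}$, which reproduces the stated constants, and the constraint $s\sigma^2\|\Sigma\|_2<1$ holds automatically at this choice.
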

\Cref{thm:tail_quadratic_subgaussians} is similar to the Hanson-Wright inequality \cite{rudelson2013hanson}, which provides two-sided tail bounds for quadratic forms of sub-Gaussian vectors with independent entries. Moreover, it is related to the two-sided tail bound for quadratic forms of Gaussian random variables from \cite[Lemma 1]{laurent2000adaptive}. As a first application of \Cref{thm:tail_quadratic_subgaussians}, we derive tail bounds for the norm of sums of independent zero-mean sub-Gaussian random vectors.
\begin{lem}[Sums of independent sub-Gaussian random vectors]\label{lem:sums-of-unbounded}
Let $V_1, \dots, V_n$ be independent zero-mean $\sigma$-sub-Gaussian random vectors that take values in $\mathbb{R}^M$.  Then $\mathbb{P}\left((1/n)\|\sum_{i=1}^n V_i \|_2^2 \geq \sigma^2 (M + 2 \sqrt{M  t} + 2 t \right) \leq e^{-t}.$
\end{lem}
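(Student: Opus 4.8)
The plan is to reduce the claim to a single application of \Cref{thm:tail_quadratic_subgaussians}. First I would stack the vectors: set $V \coloneqq (V_1^\top, \dots, V_n^\top)^\top \in \mathbb{R}^{nM}$. Since the $V_i$ are independent zero-mean $\sigma$-sub-Gaussian random vectors, a routine computation with the moment generating function (factoring the expectation over the independent blocks and using the sub-Gaussian bound \eqref{eq:sub-gaussian_condition} blockwise) shows that $V$ is itself a zero-mean $\sigma$-sub-Gaussian random vector in $\mathbb{R}^{nM}$. This is the only place independence is used, and it is the step I would be most careful about, though it is not really an obstacle — it is the standard fact that a concatenation of independent sub-Gaussian vectors with a common parameter is sub-Gaussian with the same parameter.

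Next I would choose the matrix $H$ so that $HV = \tfrac{1}{\sqrt n}\sum_{i=1}^n V_i$. Concretely, take $H \coloneqq \tfrac{1}{\sqrt n}\,(\id_M \mid \id_M \mid \cdots \mid \id_M) \in \mathbb{R}^{M \times nM}$, the horizontal concatenation of $n$ copies of the $M \times M$ identity scaled by $1/\sqrt n$. Then $\|HV\|_2^2 = \tfrac1n\big\|\sum_{i=1}^n V_i\big\|_2^2$, which is exactly the quantity we want to bound.

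It then remains to compute $\Sigma \coloneqq H^\top H$ and plug into \Cref{thm:tail_quadratic_subgaussians}. A direct block computation gives $H^\top H = \tfrac1n J \otimes \id_M$ where $J$ is the $n \times n$ all-ones matrix, or equivalently $HH^\top = \id_M$; in either case one reads off $\trace{\Sigma} = \trace{HH^\top} = M$, and since $HH^\top = \id_M$ one has $\Sigma^2$ with $\trace{\Sigma^2} = \trace{(HH^\top)^2} = \trace{\id_M} = M$ and $\|\Sigma\|_2 = \|HH^\top\|_2 = 1$. (Here I am using that $\Sigma$ and $HH^\top$ have the same nonzero eigenvalues, so traces of powers and the spectral norm agree on the relevant part.) Because $\ev{V} = 0$, the second term in the bound of \Cref{thm:tail_quadratic_subgaussians} vanishes entirely. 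Substituting $\trace{\Sigma} = M$, $\trace{\Sigma^2} = M$, $\|\Sigma\|_2 = 1$ into the theorem yields
\begin{equation*}
\frac1n\Big\|\sum_{i=1}^n V_i\Big\|_2^2 = \|HV\|_2^2 \leq \sigma^2\big(M + 2\sqrt{Mt} + 2t\big)
\end{equation*}
with probability at least $1 - e^{-t}$, which is precisely the claimed inequality. The only real work is the bookkeeping for $\Sigma$ and the blockwise moment-generating-function argument for the concatenated vector; everything else is a direct substitution.
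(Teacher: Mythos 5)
Your proposal is correct and takes essentially the same route as the paper: stack the $V_i$ into a single zero-mean $\sigma$-sub-Gaussian vector in $\mathbb{R}^{nM}$ via the blockwise moment-generating-function factorization and then invoke \Cref{thm:tail_quadratic_subgaussians}, with the mean term vanishing since $\ev{V}=0$. The only difference is in bookkeeping: you choose $H = n^{-1/2}(\id \mid \cdots \mid \id)$ directly, so $HH^\top = \id$ gives $\trace{\Sigma}=M$, $\trace{\Sigma^2}=M$, $\|\Sigma\|_2=1$ at a glance, whereas the paper applies the theorem to $H=B^{1/2}$ for the block matrix $B = n\,H^\top H$ and computes $\trace{B}$, $\trace{B^2}$, and $\|B\|_2$ by hand---both yield the identical bound.
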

\begin{proof}
Our first aim is to prove that $\vec{V} \coloneqq (V_1^\top, \dots, V_{n}^\top)^\top$ is a $\sigma$-sub-Gaussian random vector in $\mathbb{R}^{nM}$. For this purpose, we let $\alpha_1, \dots, \alpha_n \in \mathbb{R}^M$, define $\alpha \coloneqq (\alpha_1^\top, \dots, \alpha_n^\top)^\top \in \mathbb{R}^{nM}$, and estimate
\begin{equation}\label{eq:v_subgaussian}
\ev{\exp(\alpha^\top \vec{V})} =  \prod_{i=1}^n \ev{ \exp(\alpha_i^\top V_i) }  \leq \prod_{i=1}^n \exp(\|\alpha_i\|_2^2 \sigma^2 / 2 )  = \exp(\|\alpha\|_2^2 \sigma^2 /2).
\end{equation}
The first identity holds because $V_1, \dots, V_n$ are independent. The second relation is true since $V_1, \dots, V_n$ are $\sigma$-sub-Gaussian random vectors. The estimate in \eqref{eq:v_subgaussian} implies that $\vec{V}$ is a $\sigma$-sub-Gaussian random vector. Our next aim is to show that  $\|\sum_{i=1}^n V_i \|_2^2$ is equal to a quadratic form in $\vec{V}$. For this purpose, we define the matrix $B=(B_{i,j}) \in \mathbb{R}^{nM \times nM}$ by
\begin{equation*}
B_{i,j} \coloneqq \begin{cases} 1 \quad \text{if there exists } k \in \mathbb{Z} \text{ such that } j - i = k M , \\ 
0 \quad \text{otherwise.}
\end{cases} 
\end{equation*}
It is easy to see that $B$ is symmetric. Let $V_{i,j}$ denote the $j$th entry of $V_i$. Then
\begin{equation*}
\vec{V}^\top B \vec{V} = \sum_{i=1}^n \sum_{j=1}^n \sum_{k=1}^M \sum_{l=1}^M B_{k+(i-1)M, l+(j-1)M} V_{i,k} V_{j,l}  = \sum_{i=1}^n \sum_{j=1}^n \sum_{k=1}^M V_{i,k} V_{j,k} = \|\sum_{i=1}^n V_i \|_2^2.
\end{equation*}
This proves that $\|\sum_{i=1}^n V_i \|_2^2 = \vec{V}^\top B \vec{V}$ and that $B$ is positive semidefinite. Moreover,  we have $\trace{B} = nM$, $\trace{B^2} = \| B \|_F^2 = n^2 M$,  
and
$
\| \sum_{i=1}^n  V_i\|_2^2 \leq n \sum_{i=1}^n \| V_i\|_2^2,  
$
Since the last relation is an equality for $V_1 = \cdots = V_n$, we have $\| B \|_2 = n$. The proof is completed by applying \Cref{thm:tail_quadratic_subgaussians} to $H = B^{1/2}$ and $V = \vec{V}$,  where $B^{1/2}$ is a square root of $B$. 
\end{proof}
The product of a sub-Gaussian and a bounded random variable is sub-Gaussian.
\begin{lem}[Products of sub-Gaussian and bounded variables]\label{lem:products_of_sub_gaussian_rv}
Let $V$ and $W$ be random variables such that $\ev{V} = \ev{VW} = 0$, $V$ is sub-Gaussian with parameter $\sigma$, and $|W|$ is almost surely bounded by $b \geq 0$. Then $VW$ is sub-Gaussian with parameter $b \sigma \sqrt{3/2}$.
\end{lem}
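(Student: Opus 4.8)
The plan is to reduce the claim to a moment generating function estimate and then peel off the ``linear part'' using $\ev{VW}=0$. Since $\ev{VW}=0$, the assertion that $VW$ is sub-Gaussian with parameter $b\sigma\sqrt{3/2}$ is exactly that $\ev{e^{\lambda VW}}\le e^{3b^2\sigma^2\lambda^2/4}$ for all $\lambda\in\mathbb{R}$; replacing $W$ by $-W$ (which still satisfies $|{-W}|\le b$ and $\ev{V(-W)}=0$) we may assume $\lambda\ge0$. Writing $\lambda VW=(\lambda bV)(W/b)$ with $|W/b|\le1$ and using that $w\mapsto e^{(\lambda bV)w}$ is convex on $[-1,1]$, hence lies below the secant line joining its values at $w=\pm1$, gives the pointwise bound
\[
e^{\lambda VW}\le\cosh(\lambda bV)+\tfrac{W}{b}\sinh(\lambda bV),
\]
and therefore $\ev{e^{\lambda VW}}\le\ev{\cosh(\lambda bV)}+\tfrac1b\ev{W\sinh(\lambda bV)}$.

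For the first term, $\ev{\cosh(\lambda bV)}=\tfrac12\bigl(\ev{e^{\lambda bV}}+\ev{e^{-\lambda bV}}\bigr)\le e^{b^2\sigma^2\lambda^2/2}$ by the sub-Gaussian condition~\eqref{eq:sub-gaussian_condition} (here $\ev{V}=0$ is used). For the second term I would first use $\ev{VW}=0$ to subtract the linear part, $\ev{W\sinh(\lambda bV)}=\ev{W\bigl(\sinh(\lambda bV)-\lambda bV\bigr)}$; since $\sinh$ is odd one has $|\sinh(\lambda bV)-\lambda bV|=\sinh(\lambda b|V|)-\lambda b|V|$, so using $|W|\le b$ and monotone convergence,
\[
\tfrac1b\ev{W\sinh(\lambda bV)}\le\ev{\sinh(\lambda b|V|)-\lambda b|V|}=\sum_{m\ge1}\frac{(\lambda b)^{2m+1}}{(2m+1)!}\ev{|V|^{2m+1}}.
\]
I would then invoke the moment bound $\ev{V^{2m}}\le\frac{(2m)!}{2^mm!}\sigma^{2m}$ valid for a $\sigma$-sub-Gaussian variable, together with Cauchy--Schwarz $\ev{|V|^{2m+1}}\le(\ev{V^{2m}}\ev{V^{2m+2}})^{1/2}=\sqrt{2m+1}\,\frac{(2m)!}{2^mm!}\,\sigma^{2m+1}$ (the last identity using $\frac{(2m+2)!}{2^{m+1}(m+1)!}=(2m+1)\frac{(2m)!}{2^mm!}$). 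With $u\coloneqq\lambda b\sigma\ge0$, and since $\frac{(2m)!}{(2m+1)!}=\frac1{2m+1}$ and $\frac1{\sqrt{2m+1}}\le\frac1{\sqrt3}$ for $m\ge1$, the sum is at most $\sum_{m\ge1}\frac{u^{2m+1}}{\sqrt{2m+1}\,2^mm!}\le\frac{u}{\sqrt3}\sum_{m\ge1}\frac{(u^2/2)^m}{m!}=\frac{u}{\sqrt3}\bigl(e^{u^2/2}-1\bigr)$.

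Combining the two estimates gives $\ev{e^{\lambda VW}}\le e^{u^2/2}+\frac{u}{\sqrt3}(e^{u^2/2}-1)$, so it remains to establish the elementary one-variable inequality $(1+\sqrt{t/3})e^{t/2}-\sqrt{t/3}\le e^{3t/4}$ for all $t\ge0$ (with $t=u^2$). Both sides equal $1$ at $t=0$, and the difference $h(t)$ has $h'(t)\ge0$ on $(0,\infty)$: bounding the square-root terms via $e^x-1\le xe^x$ shows it suffices that $3e^{t/4}\ge2+\sqrt{3t}$, which holds because $3e^{t/4}-2-\sqrt{3t}\ge1+\tfrac{3t}{4}-\sqrt{3t}=\bigl(1-\tfrac12\sqrt{3t}\bigr)^2\ge0$. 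I expect the only real obstacle to be the sub-Gaussian moment bound $\ev{V^{2m}}\le\frac{(2m)!}{2^mm!}\sigma^{2m}$ (the Gaussian even moments): it is precisely this sharp comparison that yields the clean constant $\sqrt{3/2}$, while the chord inequality, the cancellation of the linear term, and the final scalar inequality are routine; a secondary, purely technical point is checking that the last inequality holds for every $t\ge0$ rather than only for small or large $t$.
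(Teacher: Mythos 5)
There is a genuine gap, and it sits exactly where you suspected: the moment bound $\ev{V^{2m}}\le\frac{(2m)!}{2^m m!}\sigma^{2m}$ is \emph{not} valid for a sub-Gaussian variable in the sense of the paper's definition \eqref{eq:sub-gaussian_condition}. That bound is Gaussian moment domination with the \emph{same} parameter $\sigma$, and the mgf condition does not imply it; it only yields moment bounds with strictly worse constants (e.g.\ $\ev{V^{2m}}\le 2^{m+1}m!\,\sigma^{2m}$ by tail integration, which exceeds the Gaussian moments by a factor of order $\sqrt{m}$). A concrete counterexample already at $m=2$: let $V=\pm1$ with probability $0.1$ each and $V=0$ with probability $0.8$. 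The smallest admissible parameter is $\sigma^2=\sup_{\lambda>0}2\lambda^{-2}\ln\bigl(1+0.2(\cosh\lambda-1)\bigr)\approx 0.231$ (the supremum is attained near $\lambda\approx 3.4$, not at $\lambda\to 0$), so $3\sigma^4\approx 0.16$, while $\ev{V^4}=0.2$. Since, as you say yourself, the clean constant $\sqrt{3/2}$ in your computation comes precisely from this sharp comparison, substituting the correct (weaker) sub-Gaussian moment bounds into your series does not deliver $\ev{e^{\lambda VW}}\le e^{3\lambda^2b^2\sigma^2/4}$, so the proof as written does not establish the lemma. The surrounding steps are fine (the reduction to $\lambda\ge0$, the chord bound $e^{\lambda VW}\le\cosh(\lambda bV)+\tfrac{W}{b}\sinh(\lambda bV)$, the cancellation of the linear term via $\ev{VW}=0$, and the final scalar inequality), but the chain breaks at the moment step.

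For comparison, the paper's proof avoids moments altogether: it starts from the pointwise inequality $\exp(t)\le\cosh\bigl(t\sqrt{3/2}\bigr)+t$, uses $|VW|\le b|V|$ together with the monotonicity of $\cosh$ on $[0,\infty)$ to get $\ev{e^{\lambda VW}}\le\ev{\cosh\bigl(\lambda b V\sqrt{3/2}\bigr)}+\lambda\,\ev{VW}$, and then applies \eqref{eq:sub-gaussian_condition} at $\pm\lambda b\sqrt{3/2}$ and the hypothesis $\ev{VW}=0$. In other words, the factor $\sqrt{3/2}$ is injected by rescaling \emph{inside} the hyperbolic cosine, where only the mgf bound is needed, rather than by expanding the $\sinh$ remainder and comparing odd moments to Gaussian ones. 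If you want to salvage your structure, you would need to replace the moment comparison by a device of this kind.
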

\begin{proof}
For all $t \in \mathbb{R}$, we have  
\begin{equation}\label{eq:cosh_estimate}
\exp(t) \leq \cosh\left(t \sqrt{3/2} \right) + t.   
\end{equation}
Consequently, for all $\lambda \in \mathbb{R}$ we have 
\begin{equation*}
\ev{\exp(\lambda V W)} \leq \ev{\cosh\Big(\lambda b V \sqrt{3/2} \Big)} + \ev{\lambda V W} \leq \exp(\lambda^2 b^2  \sigma^2 3/4). 
\end{equation*}
The first relation follows from the estimate in \eqref{eq:cosh_estimate} and the fact that  
$\cosh(r) \leq \cosh(s)$ for $r,s \in \mathbb{R}$ with $|r| \leq |s|$. The second relation follows by applying the defining property of sub-Gaussians in \eqref{eq:sub-gaussian_condition} twice, once with $\lambda b \sqrt{3/2}$ and once with $-\lambda b \sqrt{3/2}$ in place of $\lambda$, and by using the assumption that $\ev{VW}=0$. The proof is complete. 
\end{proof}
By combining \Cref{lem:sums-of-unbounded} and \Cref{lem:products_of_sub_gaussian_rv} we obtain the following corollary.
\begin{cor}[Weighted sums of independent sub-Gaussian vectors]\label{cor:weighted_sum_sub-gaussians} Let $\{(w_i, V_i)\}_{i=1}^n$ be independent copies of a bounded random variable $w$ und a $\sigma$-sub-Gaussian random vector $V$ that takes values in $\mathbb{R}^M$. Suppose that $\ev{V} = \ev{wV} = 0$, and that $| w | \leq b $ almost surely for $b \geq 0$. Then $\mathbb{P}\left((1/n)\|\sum_{i=1}^n w_i V_i \|_2^2 \geq (3/2) \sigma^2 b^2 (M + 2 \sqrt{M t} + 2 t \right) \leq e^{-t}.$

\end{cor}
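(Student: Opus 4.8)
The plan is to reduce the statement to \Cref{lem:sums-of-unbounded} by showing that the summands $w_i V_i$ are independent zero-mean sub-Gaussian random vectors with parameter $\sigma b \sqrt{3/2}$, and then to substitute this parameter into the tail bound provided by \Cref{lem:sums-of-unbounded}.

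First I would set $U_i \coloneqq w_i V_i$ for $1 \leq i \leq n$. Since $U_i$ is a (measurable) function of the pair $(w_i, V_i)$, and these pairs are independent by hypothesis, the random vectors $U_1, \dots, U_n$ are independent. Moreover, $\ev{U_i} = \ev{w_i V_i} = \ev{wV} = 0$, so each $U_i$ has mean zero.

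Next I would verify that each $U_i$ is a $(\sigma b \sqrt{3/2})$-sub-Gaussian random vector. To this end, fix $c \in \mathbb{R}^M$ with $\| c \|_2 = 1$ and consider the scalar random variable $c^\top U_i = (c^\top V_i) w_i$. Writing $\tilde{V} \coloneqq c^\top V_i$ and $\tilde{W} \coloneqq w_i$, one checks the hypotheses of \Cref{lem:products_of_sub_gaussian_rv}: $\tilde{V}$ is sub-Gaussian with parameter $\sigma$ because $V_i$ is a $\sigma$-sub-Gaussian random vector and $\| c \|_2 = 1$; $| \tilde{W} | \leq b$ almost surely; $\ev{\tilde{V}} = c^\top \ev{V_i} = 0$; and $\ev{\tilde{V} \tilde{W}} = c^\top \ev{w_i V_i} = 0$. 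Hence \Cref{lem:products_of_sub_gaussian_rv} gives that $c^\top U_i = \tilde{V}\tilde{W}$ is sub-Gaussian with parameter $b \sigma \sqrt{3/2}$. Since $c$ was an arbitrary unit vector, $U_i$ is a $(\sigma b \sqrt{3/2})$-sub-Gaussian random vector.

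Finally I would apply \Cref{lem:sums-of-unbounded} to $U_1, \dots, U_n$ with sub-Gaussian parameter $\sigma b \sqrt{3/2}$; since $(\sigma b \sqrt{3/2})^2 = (3/2)\sigma^2 b^2$, this yields $\mathbb{P}\bigl((1/n)\| \sum_{i=1}^n w_i V_i \|_2^2 \geq (3/2)\sigma^2 b^2 (M + 2\sqrt{Mt} + 2t)\bigr) \leq e^{-t}$, which is the claim. I do not expect a genuine obstacle here: the only point requiring a little care is confirming the two vanishing-expectation conditions of \Cref{lem:products_of_sub_gaussian_rv} after projecting onto a direction $c$, together with the routine observation that the vector sub-Gaussian property follows by ranging over all unit directions.
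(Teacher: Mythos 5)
Your proposal is correct and is exactly the argument the paper intends: the paper gives no separate proof but states that the corollary follows by combining \Cref{lem:sums-of-unbounded} and \Cref{lem:products_of_sub_gaussian_rv}, which is precisely your reduction (projecting $w_iV_i$ onto unit directions to get the $(\sigma b\sqrt{3/2})$-sub-Gaussian property, then applying the sum bound). No gaps to report.
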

We now extend \Cref{thm:tail_quadratic_subgaussians} to matrices with independent sub-Gaussian rows.

\begin{lem}[Quadratic form of random matrices with independent sub-Gaussian rows]\label{lem:tail_inequality_quadratic_max_forms}
Let $H \in \mathbb{R}^{k \times n}$ be a matrix, and define $\Sigma \coloneqq H^\top H$. Let $C$ be a symmetric positive-definite matrix in $\mathbb{R}^{N \times N }$. Suppose that $E$ is a $\mathbb{R}^{n \times N}$ random matrix with independent rows $E_i$ that satisfy
\begin{equation}\label{eq:companion_matrix}
\ev{ \exp \left( \alpha^\top C^{-1/2} (E_i-\ev{E_i}) \right)} \leq \exp\left(\| \alpha \|_2^2 / 2 \right) \quad \text{for all } \alpha \in \mathbb{R}^N,
\end{equation}
for $1 \leq i \leq n$. Then, with probability at least $1-e^{-t}$,
\begin{multline*}
\| H E \|_F^2 \leq \trace{C} \trace{\Sigma} + 2 \sqrt{\trace{C^2} \trace{\Sigma^2} t}  + 2 \| C \|_2 \| \Sigma \|_2t  \\ + \| H \ev{E} \|_F^2    \left( 1 +  2 \left(\frac{\| C \|_2^2 \| \Sigma \|_2^2}{\trace{C^2} \trace{\Sigma^2} } t\right)^{1/2} \right). 
\end{multline*}
\end{lem}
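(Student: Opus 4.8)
The plan is to reduce the statement to \Cref{thm:tail_quadratic_subgaussians} by a vectorization argument, in the same spirit as the reduction carried out in the proof of \Cref{lem:sums-of-unbounded}. First I would ``whiten'' the rows: set $W_i \coloneqq C^{-1/2} E_i$ for $1 \le i \le n$. These are independent because the $E_i$ are, and hypothesis \eqref{eq:companion_matrix} says precisely that each $W_i$ is a $1$-sub-Gaussian random vector in $\mathbb{R}^N$. Stacking them into $\vec{W} \coloneqq (W_1^\top, \dots, W_n^\top)^\top \in \mathbb{R}^{nN}$ and repeating verbatim the multiplicative moment-generating-function estimate \eqref{eq:v_subgaussian} --- using independence of the $W_i$ together with their $1$-sub-Gaussianity --- shows that $\vec{W}$ is itself a $1$-sub-Gaussian random vector.

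Next I would express $\| H E \|_F^2$ as a quadratic form in $\vec{W}$. For any matrix $M \in \mathbb{R}^{n \times N}$ with rows $M_1, \dots, M_n$, a direct expansion gives $\| H M \|_F^2 = \sum_{i,j=1}^n \Sigma_{ij} \langle M_i, M_j \rangle$. Applying this with $M = E$ and using $E_i = C^{1/2} W_i$, hence $\langle E_i, E_j \rangle = W_i^\top C W_j$, yields $\| H E \|_F^2 = \sum_{i,j} \Sigma_{ij} W_i^\top C W_j = \vec{W}^\top (\Sigma \otimes C) \vec{W}$, where $\Sigma \otimes C$ is the Kronecker product with blocks ordered consistently with the stacking in $\vec{W}$. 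Since $\Sigma = H^\top H$ and $C$ are symmetric positive semidefinite, so is $\hat{\Sigma} \coloneqq \Sigma \otimes C$, and $\hat{H} \coloneqq \Sigma^{1/2} \otimes C^{1/2}$ satisfies $\hat{H}^\top \hat{H} = \hat{\Sigma}$; therefore $\| H E \|_F^2 = \| \hat{H} \vec{W} \|_2^2$. Running the same computation with $\ev{W_i} = C^{-1/2} \ev{E_i}$ in place of $W_i$ gives $\| \hat{H} \ev{\vec{W}} \|_2^2 = \sum_{i,j} \Sigma_{ij} \langle \ev{E_i}, \ev{E_j} \rangle = \| H \ev{E} \|_F^2$.

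Finally I would invoke \Cref{thm:tail_quadratic_subgaussians} with the matrix $\hat{H}$, the vector $\vec{W}$, and sub-Gaussian parameter $1$, and translate the quantities it produces via the standard Kronecker identities $\trace{\hat{\Sigma}} = \trace{\Sigma}\trace{C}$, $\trace{\hat{\Sigma}^2} = \trace{\Sigma^2}\trace{C^2}$ (because $(\Sigma \otimes C)^2 = \Sigma^2 \otimes C^2$), and $\| \hat{\Sigma} \|_2 = \| \Sigma \|_2 \| C \|_2$. Substituting these, together with $\| \hat{H} \vec{W} \|_2^2 = \| H E \|_F^2$ and $\| \hat{H} \ev{\vec{W}} \|_2^2 = \| H \ev{E} \|_F^2$, turns the conclusion of \Cref{thm:tail_quadratic_subgaussians} into exactly the asserted inequality, valid with probability at least $1 - e^{-t}$.

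The result is essentially a repackaging of \Cref{thm:tail_quadratic_subgaussians}, so I do not expect a serious obstacle; the only points demanding care are the bookkeeping --- getting the Kronecker-product identification of $\| H E \|_F^2$ and $\| H \ev{E} \|_F^2$ right with a consistent index ordering in $\vec{W}$ --- and checking that the whitened stacked vector $\vec{W}$ inherits $1$-sub-Gaussianity, which is literally the argument already used for \Cref{lem:sums-of-unbounded}.
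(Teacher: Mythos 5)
Your proposal is correct and follows essentially the same route as the paper: whiten the rows via $C^{-1/2}$, stack into a single $1$-sub-Gaussian vector by the product moment-generating-function argument, and apply \Cref{thm:tail_quadratic_subgaussians} to a suitable quadratic form. The only difference is bookkeeping: you package the quadratic form as the Kronecker product $\Sigma \otimes C$ and use the identities $\trace{\Sigma\otimes C}=\trace{\Sigma}\trace{C}$, $(\Sigma\otimes C)^2=\Sigma^2\otimes C^2$, $\|\Sigma\otimes C\|_2=\|\Sigma\|_2\|C\|_2$, whereas the paper eigendecomposes $C=UDU^\top$ and works with the block-diagonal matrix $S=\diag(d_1^{1/2}H,\dots,d_N^{1/2}H)$, which is the same matrix up to an orthogonal change of variables and a permutation.
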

\begin{proof}
We factorize $C$ as $C = U D U^\top$, where $U \in \mathbb{R}^{N \times N}$ is an  orthogonal matrix and $D=\diag(d_1, \dots, d_N) \in \mathbb{R}^{N \times N}$ is a diagonal matrix. For $F = E C^{-1/2}$, we have 
\begin{equation*}
\| H E \|_F^2 = \| H F U D^{1/2} U^\top \|_F^2 = \| H F U D^{1/2} \|_F^2.
\end{equation*}
We define $G \coloneqq F U$. The rows $G_i$ of $G$ are independent and satisfy, for all $\alpha \in \mathbb{R}^N$,  
\begin{equation}\label{eq:G_subgaussian}
\ev{ \exp \left( \alpha^\top G_i \right)} = \ev{ \exp \left( \alpha^\top U^\top C^{-1/2}E_i \right)} \leq \exp\left(\| U \alpha \|_2^2 / 2 \right) = \exp\left(\| \alpha \|_2^2 / 2 \right).
\end{equation}
For the block diagonal matrix $S \coloneqq \text{diag}\left( d_1^{1/2} H, \dots, d_N^{1/2} H \right)\in \mathbb{R}^{kN \times nN}$ and $\vec{G} \in \mathbb{R}^{nN}$ defined by
$
\vec{G} \coloneqq \left(G_{1,1}, \dots, G_{n,1}, G_{1,2}, \dots, G_{n,2}, \dots, G_{1,N}, \dots, G_{n,N} \right)^\top
$, we have
\begin{equation*}
\| H G D^{1/2} \|_F^2 = \| S \vec{G} \|_2^2.
\end{equation*}
Let $\alpha \in \mathbb{R}^{nN}$, and define $\alpha^i = \left(\alpha_{0+i}, \alpha_{n+i}, \dots, \alpha_{(N-1)n + i} \right)^\top \in \mathbb{R}^N$ for $1 \leq i \leq n$ . Then 
\begin{multline}\label{eq:vec_G_sub-gausian}
\ev{ \exp \left( \alpha^\top \vec{G} \right) } = \ev{ \exp \left(\sum_{i=1}^n \sum_{j=1}^N \alpha_{(j-1)n+i} G_{i,j} \right) } = \ev{ \prod_{i=1}^n \exp\left( \sum_{j=1}^N \alpha_{(j-1)n+i} G_{i,j} \right)} \\  = \ev{ \prod_{i=1}^n \exp\left( (\alpha^i)^\top G_i \right)} = \prod_{i=1}^n \ev{  \exp\left( (\alpha^i)^\top G_i \right)}  \leq \prod_{i=1}^n \exp\left(\| \alpha^i \|_2^2/2\right)  =  \exp\left(\| \alpha\|_2^2/2\right).
\end{multline}
The first identity follows by definition of $\vec{G}$. The second identity holds because the exponential map converts a sum of real scalars to a product. The third identity holds by definition of $\alpha^i$. The fourth identity follows from the independence of the rows of $G$. The fifth relation follows from \eqref{eq:G_subgaussian}. The sixth identity holds because the logarithm converts a product of positive scalars to a sum. Since the relation in \eqref{eq:vec_G_sub-gausian} shows that $\vec{G}$ is a 1-sub-Gaussian random vector, we can apply \Cref{thm:tail_quadratic_subgaussians} to $H=S$ and $V = \vec{G}$ to obtain
\begin{multline}\label{eq:vectorized_quadratic-form_bound}
\| S \vec{G} \|_2^2 \leq \trace{S^\top S} + 2 \sqrt{\trace{(S^\top S)^2}t}   + 2 \|S^\top S\|_2 t    + \| S \ev{\vec{G}} \|_2^2    \left( 1 +  2 \left(\frac{ \| S^\top S \|_2^2 t}{\trace{(S^\top S)^2} }\right)^{1/2} \right)   
\end{multline}
with probability at least $1-e^{-t}$. We now need four identities for the terms in \eqref{eq:vectorized_quadratic-form_bound} that can be derived using \cite[Theorem 1.3.1 on p.~23]{golub2013matrix}: the first identity is 
\begin{equation*}
\trace{S^\top S} = \sum_{j=1}^N d_j \trace{H^\top H } = \trace{C} \trace{\Sigma}.
\end{equation*}
The second identity is $\trace{(S^\top S)^2} = \trace{C^2} \trace{\Sigma^2}$. The third identity is $\| S^\top S \|_2 = \|C \|_2 \| \Sigma \|_2$. The fourth identity is
$
\|S \ev{\vec{G}} \|_2^2 = \| H \ev{E} \|_F^2.
$
To complete the proof, it suffices to combine these four identities with the estimate in \eqref{eq:vectorized_quadratic-form_bound}
\end{proof}
A random vector $V$ is called isotropic if $\cov{V} = \id$; see \cite[Definition 3.2.1 on p.~47]{vershynin2018high} The following theorem provides tail bounds for the smallest and largest singular values of a random matrix with independent isotropic and sub-Gaussian rows.
\begin{thm}[Singular values of random matrices with independent sub-Gaussian rows]\label{thm:sub_gaussian-rows}
There exists an absolute constant $c>0$ such that for every $n \times M$ matrix $\boldsymbol{Z}$ whose rows are independent $\rho$-sub-Gaussian isotropic random vectors in $\mathbb{R}^M$ and every $t \geq 0$, with probability at least $1- 2 e^{-t}$, we have
\begin{equation}\label{eq:singular_value_estimate}
\sqrt{n}- c \rho^2 (\sqrt{M} + \sqrt{t}) \leq \sigma_{\min}(\boldsymbol{Z}) \leq \sigma_{\max}(\boldsymbol{Z}) \leq \sqrt{n} + c \rho^2(\sqrt{M} + \sqrt{t}),
\end{equation}
where $\sigma_{\min}(\bold{Z})$ and $\sigma_{\max}(\bold{Z})$ denote the minimal and maximal singular values of $\bold{Z}$. If the entries of $\boldsymbol{Z}$ are independent standard normal random variables, then the relations in \eqref{eq:singular_value_estimate} hold with $c=1$ and $\rho=1$.
\end{thm}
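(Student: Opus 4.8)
The plan is to treat the two parts of the statement separately. The sub-Gaussian estimate in \eqref{eq:singular_value_estimate} is a standard fact of non-asymptotic random matrix theory (see, \eg, \cite{vershynin2018high}), and the natural route is an $\varepsilon$-net argument on the unit sphere: the quadratic-form inequality in \Cref{thm:tail_quadratic_subgaussians} controls $\|H V\|_2$ only for a \emph{fixed} matrix $H$ and random vector $V$, whereas here we must simultaneously control $\sigma_{\max}(\boldsymbol{Z}) = \sup_{x \in S^{M-1}} \|\boldsymbol{Z}x\|_2$ and $\sigma_{\min}(\boldsymbol{Z}) = \inf_{x \in S^{M-1}} \|\boldsymbol{Z}x\|_2$ over all directions $x$ in the unit sphere $S^{M-1} \subset \mathbb{R}^M$, which forces a genuinely different, covering-based technique. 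First I would reduce both estimates to a single two-sided deviation bound for the symmetric matrix $\tfrac{1}{n}\boldsymbol{Z}^\top\boldsymbol{Z}$: if $\|\tfrac{1}{n}\boldsymbol{Z}^\top\boldsymbol{Z} - \id\|_2 \le \max(\delta,\delta^2)$ with $\delta \coloneqq c\rho^2(\sqrt{M/n} + \sqrt{t/n})$, then writing $z$ for $\sigma_{\min}(\boldsymbol{Z})/\sqrt{n}$ or $\sigma_{\max}(\boldsymbol{Z})/\sqrt{n}$ and using $z^2 - 1 = (z-1)(z+1)$, one recovers exactly the bounds in \eqref{eq:singular_value_estimate} after elementary manipulation and a harmless adjustment of the constant.

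For the pointwise concentration, fix $x \in S^{M-1}$. By isotropy of the rows, $\ev{\langle Z_i, x\rangle^2} = 1$, and since each row is $\rho$-sub-Gaussian, $\langle Z_i, x\rangle$ is a sub-Gaussian random variable with parameter $\rho$; hence $\langle Z_i, x\rangle^2 - 1$ is a centered sub-exponential random variable whose sub-exponential parameters are of order $\rho^2$. Therefore $\tfrac{1}{n}\|\boldsymbol{Z}x\|_2^2 - 1 = \tfrac{1}{n}\sum_{i=1}^n(\langle Z_i, x\rangle^2 - 1)$ is an average of $n$ independent centered sub-exponential variables, and Bernstein's inequality yields $\mathbb{P}\big(\big|\tfrac{1}{n}\|\boldsymbol{Z}x\|_2^2 - 1\big| \ge \varepsilon\big) \le 2\exp\big(-c' n \min(\varepsilon^2/\rho^4, \varepsilon/\rho^2)\big)$ for some absolute constant $c'>0$.

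Then I would run the covering step: pick a $\tfrac14$-net $\mathcal{N}$ of $S^{M-1}$ with $|\mathcal{N}| \le 9^M$, apply the pointwise bound with $\varepsilon = \max(\delta,\delta^2)$ to each $x \in \mathcal{N}$, and take a union bound; choosing the absolute constant $c$ in the definition of $\delta$ large enough that $c' n \min(\varepsilon^2/\rho^4, \varepsilon/\rho^2) \ge M \ln 9 + t$, the failure probability is at most $2e^{-t}$. On the complementary event, the standard net-to-sphere comparison for quadratic forms, $\|\tfrac{1}{n}\boldsymbol{Z}^\top\boldsymbol{Z} - \id\|_2 \le 2\max_{x\in\mathcal{N}}\big|\tfrac{1}{n}\|\boldsymbol{Z}x\|_2^2 - 1\big|$, which holds because $\mathcal{N}$ is a $\tfrac14$-net and $\tfrac{1}{n}\boldsymbol{Z}^\top\boldsymbol{Z} - \id$ is symmetric, upgrades the net estimate to the supremum over the whole sphere; combined with the reduction in the first paragraph this gives the sub-Gaussian case.

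For the Gaussian case, the covering argument is too lossy to produce the explicit constants $c = 1$ and $\rho = 1$, so I would instead invoke the sharp comparison-based estimates for Gaussian matrices: by the Gaussian min-max (Slepian--Gordon) inequalities one has $\ev{\sigma_{\max}(\boldsymbol{Z})} \le \sqrt{n} + \sqrt{M}$ and $\ev{\sigma_{\min}(\boldsymbol{Z})} \ge \sqrt{n} - \sqrt{M}$, and since $\boldsymbol{Z} \mapsto \sigma_{\min}(\boldsymbol{Z})$ and $\boldsymbol{Z} \mapsto \sigma_{\max}(\boldsymbol{Z})$ are $1$-Lipschitz with respect to the Frobenius norm, Gaussian concentration of measure adds the $\sqrt{t}$ deviation term with constant $1$; this is the classical theorem of Davidson and Szarek, which I would cite rather than reprove. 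The main obstacle throughout is the bookkeeping of absolute constants so that the threshold collapses to the single clean expression $c\rho^2(\sqrt{M} + \sqrt{t})$: in particular one must check that, in the only regime that matters for the net argument --- where $\varepsilon$ stays bounded --- the heavy-tailed branch $\varepsilon/\rho^2$ in Bernstein's inequality is dominated by the sub-Gaussian branch $\varepsilon^2/\rho^4$, so that the dependence on $M$ and $t$ stays of square-root order.
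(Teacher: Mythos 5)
The paper does not reprove this statement at all: its ``proof'' is a citation to Vershynin's theorem on matrices with independent sub-Gaussian rows and to Wainwright's Gaussian singular-value bound, and your sketch is precisely the standard proof of those cited results (approximate-isometry reduction plus a $\tfrac14$-net with Bernstein and a union bound for the sub-Gaussian case; Gordon/Davidson--Szarek comparison plus Gaussian concentration for the Gaussian constants), so it matches the paper in substance. The only detail to watch is the Gaussian case, where $1$-Lipschitz concentration at deviation $s$ gives failure probability $e^{-s^{2}/2}$, so hitting failure probability $e^{-t}$ per side yields a $\sqrt{2t}$ rather than $\sqrt{t}$ term---a $\sqrt{2}$ bookkeeping discrepancy that is already present in the statement as cited, not a flaw specific to your argument.
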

\begin{proof}
See \cite[Theorem 4.61 on p.~98]{vershynin2018high} and \cite[Theorem 6.1 on p.~161]{wainwright2019high}.
\end{proof}

We now establish a few matrix identities needed for the proofs of our main results.
\begin{lem}\label{lem:trace_and_trace_square_inequalities}
Let $G \in \mathbb{R}^{n \times m}$ be injective. Then $G^\top G \in \mathbb{R}^{m \times m}$ is invertible and for $H = (G^\top G)^{-1} G^\top \in \mathbb{R}^{m \times n}$ and $\Sigma = H^\top H \in \mathbb{R}^{n \times n}$, we have 
\begin{align}
\trace{\Sigma} = \trace{(G^\top G)^{-1}}, \quad \label{eq:trace_sigma_1} 
\trace{\Sigma^2} = \trace{(G^\top G)^{-2}}, \quad \text{and } 
\| \Sigma \|_2 = \| (G^\top G)^{-1} \|_2.
\end{align}

\end{lem}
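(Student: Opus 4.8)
The plan is to reduce all three identities to the elementary fact that for matrices $P$ and $Q$ of compatible shapes the products $PQ$ and $QP$ have the same nonzero eigenvalues with the same multiplicities.

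First I would settle invertibility: since $G$ is injective, $x^\top (G^\top G) x = \| G x \|_2^2 > 0$ for every nonzero $x \in \mathbb{R}^m$, so $G^\top G$ is symmetric positive definite and hence invertible. As $(G^\top G)^{-1}$ is symmetric, transposing $H = (G^\top G)^{-1} G^\top$ gives $H^\top = G (G^\top G)^{-1}$, and therefore $\Sigma = H^\top H = G (G^\top G)^{-2} G^\top$.

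Next I would apply the eigenvalue fact with $P = G$ and $Q = (G^\top G)^{-2} G^\top$, so that $PQ = \Sigma$ while $QP = (G^\top G)^{-2}(G^\top G) = (G^\top G)^{-1}$. Hence $\Sigma$ and $(G^\top G)^{-1}$ share the same nonzero eigenvalues, counted with multiplicity; since $(G^\top G)^{-1}$ is positive definite, all $m$ of its eigenvalues are positive, and the matrix $\Sigma = H^\top H$ (with $H$ of rank $m$) is positive semidefinite of rank $m$, so its remaining $n-m$ eigenvalues vanish. The trace is the sum of the eigenvalues, so the zeros contribute nothing and $\trace{\Sigma} = \trace{(G^\top G)^{-1}}$; the eigenvalues of $\Sigma^2$ are the squares of those of $\Sigma$, which again yields $\trace{\Sigma^2} = \trace{(G^\top G)^{-2}}$; and, both matrices being positive semidefinite, their spectral norms equal their largest eigenvalues, so $\| \Sigma \|_2 = \| (G^\top G)^{-1} \|_2$ provided one notes that the largest eigenvalue of $\Sigma$ is its largest nonzero eigenvalue (it is nonzero because $G \neq 0$).

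There is no genuine obstacle here; the only point needing a moment's care is the spectral-norm identity, where one must observe that the extra zero eigenvalues picked up in passing from the $m \times m$ matrix $(G^\top G)^{-1}$ to the $n \times n$ matrix $\Sigma$ do not alter the maximum. An equally short alternative would be to insert a thin singular value decomposition $G = U S V^\top$, with $U \in \mathbb{R}^{n \times m}$ having orthonormal columns, $V \in \mathbb{R}^{m \times m}$ orthogonal, and $S \in \mathbb{R}^{m \times m}$ diagonal with positive entries, compute $\Sigma = U S^{-2} U^\top$ and $(G^\top G)^{-1} = V S^{-2} V^\top$, and read off the three identities from the invariance of the trace, of the trace of the square, and of the spectral norm under multiplication by matrices with orthonormal columns.
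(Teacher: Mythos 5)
Your proof is correct, and it reaches the three identities by a somewhat different mechanism than the paper. The paper first computes $\Sigma = G(G^\top G)^{-2}G^\top$ and then obtains the two trace identities by a purely algebraic use of the cyclic property $\trace{BC}=\trace{CB}$ (e.g.\ $\trace{\Sigma}=\trace{G^\top G\,(G^\top G)^{-2}}=\trace{(G^\top G)^{-1}}$, and similarly for $\trace{\Sigma^2}$), while the spectral-norm identity is handled separately by citing $\|H^\top H\|_2=\|HH^\top\|_2$ together with the computation $HH^\top=(G^\top G)^{-1}$. You instead derive all three statements uniformly from the fact that $PQ$ and $QP$ have the same nonzero eigenvalues (with $P=G$, $Q=(G^\top G)^{-2}G^\top$), or equivalently from a thin SVD of $G$. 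Both routes are sound: your unified spectral argument makes the norm identity come for free and makes transparent why the only discrepancy between $\Sigma$ and $(G^\top G)^{-1}$ is a padding of zero eigenvalues, whereas the paper's trace-cyclicity computation needs no eigenvalue reasoning at all for the trace identities and only a standard operator-norm fact for the third. Your explicit remark that the added zero eigenvalues cannot change the maximum (since the top eigenvalue of $(G^\top G)^{-1}$ is positive) is exactly the point that needs care in your version, and you address it correctly.
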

\begin{proof}
Since $G$ is injective, we have $x^\top G^\top G x = \| G x \|_2^2  > 0$ for $x \in \mathbb{R}^m \setminus \{0\}$; hence the matrix $G^\top G$ is positive-definite and thus invertible. Using the fact that $\trace{BC} = \trace{CB}$ for $B \in \mathbb{R}^{n \times m}$ and $C \in \mathbb{R}^{m \times n}$, we have  
\begin{align*}
\trace{\Sigma} &= \trace{G (G^\top G)^{-2} G^\top} = \trace{G^\top G (G^\top G)^{-2}} = \trace{(G^\top G)^{-1}}, \\ 
\trace{\Sigma^2} &= \trace{G (G^\top G)^{-2} G^\top G (G^\top G)^{-2} G^\top}, = \trace{G^\top G (G^\top G)^{-3}} = \trace{(G^\top G)^{-2}},
\end{align*}
which proves the first two identities in \eqref{eq:trace_sigma_1}. By \cite[Satz~V.5.2 (f) on p.~237]{Werner2007}, we have $\| H^\top H \|_2 = \| H H^\top \|_2$. In combination with the fact that  
\begin{equation*}
\| H H^\top \|_2 = \|(G^\top G)^{-1} G^\top G (G^\top G)^{-1}\|_2 = \| (G^\top G)^{-1} \|_2,
\end{equation*}
this proves the third identity in \eqref{eq:trace_sigma_1}. The proof is complete.
\end{proof}

\subsection{Main results}
We begin by investigating the conditional approximation error. 
\begin{prop}[Conditional approximation error]\label{prop:cond_out_of_sample_miss}
Suppose that $\boldsymbol{Z} \coloneqq \boldsymbol{Y} C_{YY}^{-1/2}$ is injective. Moreover, assume that there exists a symmetric positive-definite matrix $C \in \mathbb{R}^{N \times N}$ such that $C^{-1/2} E_i$ is a $1$-sub-Gaussian random vector for $1 \leq i \leq n$. Then, for all $t \geq 0$,
\begin{equation}\label{eq:conditioned_bound}
\ev_Y{ \| \hat{\Theta}^\top Y - \Theta_*^\top Y \|_2^2 \mid \boldsymbol{Y}} \leq \varepsilon_{\text{err}}^{\boldsymbol{Y}} + \varepsilon_{\text{bias}}^{\boldsymbol{Y}},
\end{equation}
with probability at least $1-e^{-t}$, where 
\begin{align}
\varepsilon_{\text{err}}^{\boldsymbol{Y}}&\coloneqq \trace{C} \trace{(\boldsymbol{Z}^\top \boldsymbol{Z})^{-1}} + 2 \sqrt{\trace{C^2}\trace{(\boldsymbol{Z}^\top \boldsymbol{Z})^{-2}}t }  + 2 \|C\|_2 \|(\boldsymbol{Z}^\top \boldsymbol{Z})^{-1}\|_2 t, 
 \label{eq:eps_err}\\ 
\varepsilon_{\text{bias}}^{\boldsymbol{Y}} &\coloneqq  \| (\boldsymbol{Z}^\top \boldsymbol{Z})^{-1} \boldsymbol{Z}^\top \ev{\boldsymbol{E} \mid \boldsymbol{Y}} \|_F^2     \left( 1 +  2 \left(\frac{\| C \|_2^2 \| (\boldsymbol{Z}^\top \boldsymbol{Z})^{-1} \|_2^2t}{\trace{C^2} \trace{(\boldsymbol{Z}^\top \boldsymbol{Z})^{-2} }}\right)^{1/2} \right). \label{eq:eps_bias}
\end{align}

\end{prop}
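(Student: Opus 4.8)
The plan is to integrate out a fresh test point $Y$ so that the left-hand side of \eqref{eq:conditioned_bound} becomes a quadratic form in the random matrix $\boldsymbol{E}$, and then to apply \Cref{lem:tail_inequality_quadratic_max_forms} conditionally on the sample design $\boldsymbol{Y}$. For the first step I would repeat the computation in the first four lines of \eqref{eq:init_error_decomp}, but for multidimensional responses and without taking the outer expectation over $\boldsymbol{E}$: using $\hat{\Theta}-\Theta_* = (\boldsymbol{Y}^\top \boldsymbol{Y})^{-1} \boldsymbol{Y}^\top \boldsymbol{E}$ from \eqref{eq:diff_lstsq_LMMSE_stimator}, the trace trick, the identity $\ev_Y{Y Y^\top} = C_{YY}$ (valid since $Y$ is independent of the sample and has mean zero), and the identity $C_{YY}^{1/2}(\boldsymbol{Y}^\top \boldsymbol{Y})^{-1} \boldsymbol{Y}^\top = (\boldsymbol{Z}^\top \boldsymbol{Z})^{-1} \boldsymbol{Z}^\top$ (which follows from $\boldsymbol{Y} = \boldsymbol{Z} C_{YY}^{1/2}$), I get
\begin{equation*}
\ev_Y{ \| \hat{\Theta}^\top Y - \Theta_*^\top Y \|_2^2 \mid \boldsymbol{Y}} = \trace{\boldsymbol{E}^\top \boldsymbol{Y}(\boldsymbol{Y}^\top \boldsymbol{Y})^{-1} C_{YY} (\boldsymbol{Y}^\top \boldsymbol{Y})^{-1} \boldsymbol{Y}^\top \boldsymbol{E}} = \| (\boldsymbol{Z}^\top \boldsymbol{Z})^{-1} \boldsymbol{Z}^\top \boldsymbol{E} \|_F^2 .
\end{equation*}

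Next I would condition on $\boldsymbol{Y}$ and apply \Cref{lem:tail_inequality_quadratic_max_forms} with $H \coloneqq (\boldsymbol{Z}^\top \boldsymbol{Z})^{-1} \boldsymbol{Z}^\top$, $\Sigma \coloneqq H^\top H$, the given matrix $C$, and the random matrix $\boldsymbol{E}$. Once $\boldsymbol{Y}$ is fixed, $H$ is deterministic and well defined because $\boldsymbol{Z}$ is injective by hypothesis; moreover, conditionally on $\boldsymbol{Y}$ the rows $E_i = X_i - \Theta_*^\top Y_i$ are still independent (each depends only on the $i$th sample) and, by the sub-Gaussianity assumption, satisfy \eqref{eq:companion_matrix} with $\ev{E_i}$ replaced by the conditional mean $\ev{E_i \mid \boldsymbol{Y}}$. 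The lemma then yields, with conditional probability at least $1-e^{-t}$,
\begin{equation*}
\| H \boldsymbol{E} \|_F^2 \leq \trace{C}\trace{\Sigma} + 2 \sqrt{\trace{C^2}\trace{\Sigma^2}t} + 2 \| C \|_2 \| \Sigma \|_2 t + \| H \ev{\boldsymbol{E} \mid \boldsymbol{Y}} \|_F^2 \left( 1 + 2 \left( \frac{\| C \|_2^2 \| \Sigma \|_2^2 t}{\trace{C^2}\trace{\Sigma^2}} \right)^{1/2} \right).
\end{equation*}
I would then invoke \Cref{lem:trace_and_trace_square_inequalities} with $G = \boldsymbol{Z}$ to rewrite $\trace{\Sigma}$, $\trace{\Sigma^2}$ and $\| \Sigma \|_2$ as $\trace{(\boldsymbol{Z}^\top \boldsymbol{Z})^{-1}}$, $\trace{(\boldsymbol{Z}^\top \boldsymbol{Z})^{-2}}$ and $\| (\boldsymbol{Z}^\top \boldsymbol{Z})^{-1} \|_2$, observe that $\| H \ev{\boldsymbol{E} \mid \boldsymbol{Y}} \|_F^2 = \| (\boldsymbol{Z}^\top \boldsymbol{Z})^{-1} \boldsymbol{Z}^\top \ev{\boldsymbol{E} \mid \boldsymbol{Y}} \|_F^2$, and combine this with the identity from the first step. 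This gives exactly \eqref{eq:conditioned_bound} with $\varepsilon_{\text{err}}^{\boldsymbol{Y}}$, $\varepsilon_{\text{bias}}^{\boldsymbol{Y}}$ as in \eqref{eq:eps_err}--\eqref{eq:eps_bias}; since the bound holds conditionally on each realisation of $\boldsymbol{Y}$, the unconditional tail bound follows by the tower property.

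The step I expect to require the most care is the conditioning: because $\boldsymbol{Z}$ is a function of $\boldsymbol{Y}$ and $\boldsymbol{E}$ is merely uncorrelated with, not independent of, $\boldsymbol{Y}$, \Cref{lem:tail_inequality_quadratic_max_forms} cannot be applied to the unconditional law of $\boldsymbol{E}$, and one must check that conditionally on $\boldsymbol{Y}$ its hypotheses are still met — in particular that \eqref{eq:companion_matrix} holds with the conditional centering $\ev{E_i \mid \boldsymbol{Y}}$ — and then re-integrate over $\boldsymbol{Y}$. The remaining ingredients, namely the reduction to the quadratic form $\| (\boldsymbol{Z}^\top \boldsymbol{Z})^{-1} \boldsymbol{Z}^\top \boldsymbol{E} \|_F^2$ and the trace identities, are routine.
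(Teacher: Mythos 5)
Your proposal is correct and follows essentially the same route as the paper's proof: reduce the conditional error to the quadratic form $\|(\boldsymbol{Z}^\top\boldsymbol{Z})^{-1}\boldsymbol{Z}^\top\boldsymbol{E}\|_F^2$, apply \Cref{lem:tail_inequality_quadratic_max_forms} with $H=(\boldsymbol{Z}^\top\boldsymbol{Z})^{-1}\boldsymbol{Z}^\top$, and rewrite the resulting trace and norm quantities via \Cref{lem:trace_and_trace_square_inequalities} with $G=\boldsymbol{Z}$. Your explicit treatment of the conditioning on $\boldsymbol{Y}$ (so that $H$ is deterministic and the rows of $\boldsymbol{E}$ satisfy \eqref{eq:companion_matrix} with the conditional centering $\ev{E_i\mid\boldsymbol{Y}}$) is a point the paper's proof leaves implicit, and you handle it correctly.
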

\begin{proof}
We have 
\begin{equation*}
\begin{split}
\ev_Y{ \| \hat{\Theta}^\top Y - \Theta_*^\top Y\|_2^2 \mid \boldsymbol{Y}, \boldsymbol{E}}  &= \trace{ (\hat{\Theta}-\Theta_*)^\top C_{YY} (\hat{\Theta}-\Theta_*) } \\
 &= \trace{\boldsymbol{E}^\top \boldsymbol{Y} (\boldsymbol{Y}^\top \boldsymbol{Y})^{-1} C_{YY} (\boldsymbol{Y}^\top \boldsymbol{Y})^{-1} \boldsymbol{Y}^\top \boldsymbol{E} } \\
  &= \trace{\boldsymbol{E}^\top \boldsymbol{Z} (\boldsymbol{Z}^\top \boldsymbol{Z})^{-2} \boldsymbol{Z}^\top \boldsymbol{E} } \\
&= \| (\boldsymbol{Z}^\top \boldsymbol{Z})^{-1} \boldsymbol{Z}^\top \boldsymbol{E} \|_F^2.
\end{split}
\end{equation*}
The first identity results from the trace trick and the identity $\ev{YY^\top} = C_{YY}$. The second identity follows by the expression for $\hat{\Theta}-\Theta_*$ in \eqref{eq:diff_lstsq_LMMSE_stimator}. The third identity is by the definition of $\bold{Z}$. The fourth identity holds because $\trace{B B^\top} = \| B \|_F^2$ for $B \in \mathbb{R}^{M \times N}$.
To complete the proof, we first apply the bound on quadratic forms of random matrices from \Cref{lem:tail_inequality_quadratic_max_forms} to $H= (\boldsymbol{Z}^\top \boldsymbol{Z})^{-1} \boldsymbol{Z}^\top$ and $\boldsymbol{E} = E$, and then apply \Cref{lem:trace_and_trace_square_inequalities} to $G = \boldsymbol{Z}$. 
\end{proof}

Before we derive our main result, we study the asymptotic behavior of the error bound in \eqref{eq:conditioned_bound} for two special cases.

\begin{prop}\label{rem:asymptotic_err}
Suppose that, for every $M \in \mathbb{N}$, the hypotheses in \Cref{prop:cond_out_of_sample_miss} hold with $C \in \mathbb{R}^{N \times N}$ independent of $M$, and $\ev{E \mid Y} = 0$ almost surely. Moreover, assume that $\boldsymbol{Z}=\boldsymbol{Z}(n(M),M)$ satisfies the hypotheses in \Cref{thm:conc_trace_prec}. Let $n=n(M)$ be such that $\lim_{M \to \infty} M/n(M) = \gamma$ for some $\gamma \in (0,1)$. Then
\begin{equation}\label{eq:err_conv_prob}
\lim_{M \to \infty} \mathbb{P} \left( \ev_Y{ \| \hat{\Theta}^\top Y - \Theta_*^\top Y \|_2^2 } \leq \trace{C} \gamma/(1-\gamma) + \xi \right) = 1.
\end{equation}
\end{prop}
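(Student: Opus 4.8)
The plan is to combine the conditional tail bound of \Cref{prop:cond_out_of_sample_miss} with the trace asymptotics of \Cref{thm:conc_trace_prec}, to supplement the latter with the Bai--Yin control of the smallest singular value of $\boldsymbol{Z}$ recalled in \Cref{rem:marchenko-pastur}, and then to let the confidence parameter $t$ tend to infinity. First I would dispose of the bias term: since the samples $(X_i,Y_i)$ are independent and $E_i$ depends only on $(X_i,Y_i)$, the assumption $\ev{E\mid Y}=0$ gives $\ev{E_i\mid\boldsymbol{Y}}=\ev{E_i\mid Y_i}=0$, whence $\ev{\boldsymbol{E}\mid\boldsymbol{Y}}=0$ almost surely, so that $\varepsilon_{\text{bias}}^{\boldsymbol{Y}}$ in \eqref{eq:eps_bias} vanishes almost surely. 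Hence \Cref{prop:cond_out_of_sample_miss} yields, for every $t\geq 0$,
\[
\mathbb{P}\bigl(\ev_Y{\|\hat{\Theta}^\top Y-\Theta_*^\top Y\|_2^2}\leq\varepsilon_{\text{err}}^{\boldsymbol{Y}}\bigr)\geq 1-e^{-t},
\]
with $\varepsilon_{\text{err}}^{\boldsymbol{Y}}$ the quantity in \eqref{eq:eps_err}.

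The crux of the argument is to show that, for each fixed $t\geq 0$, the random variable $\varepsilon_{\text{err}}^{\boldsymbol{Y}}$ converges in probability to $\trace{C}\,\gamma/(1-\gamma)$ as $M\to\infty$. Its first summand $\trace{C}\trace{(\boldsymbol{Z}^\top\boldsymbol{Z})^{-1}}$ does so by \Cref{thm:conc_trace_prec}. For the other two summands---for fixed $t$, fixed multiples of $\sqrt{t\,\trace{(\boldsymbol{Z}^\top\boldsymbol{Z})^{-2}}}$ and of $t\,\|(\boldsymbol{Z}^\top\boldsymbol{Z})^{-1}\|_2$, respectively---I would show that $\|(\boldsymbol{Z}^\top\boldsymbol{Z})^{-1}\|_2\to 0$ and $\trace{(\boldsymbol{Z}^\top\boldsymbol{Z})^{-2}}\to 0$ almost surely. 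The point is that, although $\trace{(\boldsymbol{Z}^\top\boldsymbol{Z})^{-1}}$ tends to a positive constant, the matrix $\boldsymbol{Z}^\top\boldsymbol{Z}$ itself grows like $n$: by the Bai--Yin theorem (which, as noted in \Cref{rem:marchenko-pastur}, already enters the proof of \Cref{thm:conc_trace_prec}), $\sigma_{\min}(\boldsymbol{Z})/\sqrt{n}\to 1-\sqrt{\gamma}>0$ almost surely, and since $M/n(M)\to\gamma>0$ forces $n(M)\to\infty$, it follows that $\|(\boldsymbol{Z}^\top\boldsymbol{Z})^{-1}\|_2=\sigma_{\min}(\boldsymbol{Z})^{-2}\to 0$; the inequality $\trace{(\boldsymbol{Z}^\top\boldsymbol{Z})^{-2}}\leq\|(\boldsymbol{Z}^\top\boldsymbol{Z})^{-1}\|_2\,\trace{(\boldsymbol{Z}^\top\boldsymbol{Z})^{-1}}$ then yields the second limit. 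Combining the three limits, $\varepsilon_{\text{err}}^{\boldsymbol{Y}}\to\trace{C}\,\gamma/(1-\gamma)$ in probability, and hence $\mathbb{P}(\varepsilon_{\text{err}}^{\boldsymbol{Y}}\leq\trace{C}\,\gamma/(1-\gamma)+\xi)\to 1$ as $M\to\infty$, for every fixed $\xi>0$.

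It then remains to assemble the pieces with a union bound. Fixing $t\geq 0$ and writing $A$ for the event $\{\ev_Y{\|\hat{\Theta}^\top Y-\Theta_*^\top Y\|_2^2}\leq\varepsilon_{\text{err}}^{\boldsymbol{Y}}\}$ and $B$ for the event $\{\varepsilon_{\text{err}}^{\boldsymbol{Y}}\leq\trace{C}\,\gamma/(1-\gamma)+\xi\}$, on $A\cap B$ the approximation error is at most $\trace{C}\,\gamma/(1-\gamma)+\xi$, so $\mathbb{P}(A\cap B)\geq\mathbb{P}(A)+\mathbb{P}(B)-1\geq\mathbb{P}(B)-e^{-t}$. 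Letting $M\to\infty$ with $t$ fixed and invoking the two convergence statements above gives $\liminf_{M\to\infty}\mathbb{P}\bigl(\ev_Y{\|\hat{\Theta}^\top Y-\Theta_*^\top Y\|_2^2}\leq\trace{C}\,\gamma/(1-\gamma)+\xi\bigr)\geq 1-e^{-t}$; since $t\geq 0$ was arbitrary, letting $t\to\infty$ yields \eqref{eq:err_conv_prob}. I expect the second step to be the main obstacle: that the $t$-dependent correction terms in $\varepsilon_{\text{err}}^{\boldsymbol{Y}}$ vanish in the limit is not visible from \Cref{thm:conc_trace_prec} alone and relies on the Bai--Yin lower bound for $\sigma_{\min}(\boldsymbol{Z})$; the remaining steps are routine manipulation of the union bound and of the order in which the limits $M\to\infty$ and $t\to\infty$ are taken.
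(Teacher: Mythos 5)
Your proposal is correct and follows essentially the same route as the paper's proof: drop the bias term using $\ev{E\mid Y}=0$, apply the conditional tail bound of \Cref{prop:cond_out_of_sample_miss}, use \Cref{thm:conc_trace_prec} together with the Bai--Yin theorem to get $\trace{(\boldsymbol{Z}^\top\boldsymbol{Z})^{-1}}\to\gamma/(1-\gamma)$ and $\|(\boldsymbol{Z}^\top\boldsymbol{Z})^{-1}\|_2\to 0$ (hence $\trace{(\boldsymbol{Z}^\top\boldsymbol{Z})^{-2}}\to 0$) almost surely, and finish with a union bound and letting the confidence parameter tend to its limit. Your explicit Bai--Yin argument for the smallest singular value is exactly the ingredient the paper invokes at the corresponding step (where its displayed limit $\|\boldsymbol{Z}^\top\boldsymbol{Z}\|_2\to 0$ is evidently a typo for $\|(\boldsymbol{Z}^\top\boldsymbol{Z})^{-1}\|_2\to 0$), so there is no substantive difference.
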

\begin{proof}
By \Cref{prop:cond_out_of_sample_miss}, we have
\begin{multline}\label{eq:error_bound_rescaled}
\ev_Y{ \| \hat{\Theta}^\top Y - \Theta_*^\top Y \|_2^2 \mid \boldsymbol{Y}}  \leq \trace{C} \trace{(\boldsymbol{Z}^\top \boldsymbol{Z})^{-1}} \\ + 2 \sqrt{\trace{C^2}\trace{(\boldsymbol{Z}^\top \boldsymbol{Z})^{-2}} \ln(1/\delta_1) }   + 2 \|C\|_2 \|(\boldsymbol{Z}^\top \boldsymbol{Z})^{-1}\|_2 \ln(1/\delta_1)
\end{multline}
with probability at least $1-\delta_1$ for $\delta_1 \in (0,1)$. By \Cref{thm:conc_trace_prec} and the Bai--Yin theorem \cite[Theorem 5.11 on p.~106]{bai2010spectral}, we have, almost surely,
\begin{equation*}
\lim_{M \to \infty} \trace{(\boldsymbol{Z}^\top \boldsymbol{Z})^{-1}} = \gamma/(1-\gamma) \quad \text{and} \quad \lim_{M \to \infty} \| \boldsymbol{Z}^\top \boldsymbol{Z} \|_2 = 0.
\end{equation*}
Since $\trace{(\boldsymbol{Z}^\top \boldsymbol{Z})^{-2}} \leq \| \boldsymbol{Z}^\top \boldsymbol{Z} \|_2 \trace{(\boldsymbol{Z}^\top \boldsymbol{Z})^{-1}}$, this yields $\lim_{M \to \infty}\trace{(\boldsymbol{Z}^\top \boldsymbol{Z})^{-2}}$ = 0 almost surely. Since almost sure convergence implies convergence in probability, for $\delta_2 \in (0,1)$ and $\xi \geq 0$, there is $m \in \mathbb{N}$ such that for all $M \geq m$ we have 
\begin{multline}\label{eq:event_trace_bound}
\Big| \trace{C} \trace{(\boldsymbol{Z}^\top \boldsymbol{Z})^{-1}} + 2 \sqrt{\trace{C^2}\trace{(\boldsymbol{Z}^\top \boldsymbol{Z})^{-2}}\ln(1/\delta_1) }  \\ + 2 \|C\|_2 \|(\boldsymbol{Z}^\top \boldsymbol{Z})^{-1}\|_2 \ln(1/\delta_1) - \trace{C}
\gamma/(1-\gamma) \Big| \leq \xi   
\end{multline}
with probability at least $1-\delta_2$. By applying the union bound to the union of the complements of the events in \eqref{eq:error_bound_rescaled} and \eqref{eq:event_trace_bound}, we deduce that for all $m \geq M$
\begin{equation*}
\mathbb{P} \left( \ev_Y{ \| \hat{\Theta}^\top Y - \Theta_*^\top Y \|_2^2 } \leq \trace{C} \gamma/(1-\gamma) + \xi \right) \geq 1-\delta_1 - \delta_2.
\end{equation*}
Since $\xi \geq 0$ and $\delta_1, \delta_2 \in (0,1)$ are arbitrary, the proof is complete.
\end{proof}

\begin{prop}\label{prop:Gaussian_error_converges_in_p}
Consider the Gaussian model and let $\lim_{M \to \infty} M/n(M) = \gamma \in (0,1)$. Then $\ev_Y{ \| \hat{\Theta}^\top Y - \Theta_*^\top Y \|_2^2}$ converges to  $\trace{C_{EE}} \gamma/(1-\gamma)$ in probability as $M \to \infty$. 
\end{prop}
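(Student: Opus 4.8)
The plan is to put the quantity of interest into closed form and then combine a two-sided tail bound for Gaussian quadratic forms with \Cref{thm:conc_trace_prec}. First, since $\hat{\Theta}$ is a deterministic function of the samples, $\ev_Y{\| \hat{\Theta}^\top Y - \Theta_*^\top Y \|_2^2}$ is simply the generalization error of $\hat{\Theta}$, and the first display in the proof of \Cref{prop:cond_out_of_sample_miss}, together with \eqref{eq:diff_lstsq_LMMSE_stimator} and the identity $\boldsymbol{Y}(\boldsymbol{Y}^\top\boldsymbol{Y})^{-1} C_{YY} (\boldsymbol{Y}^\top\boldsymbol{Y})^{-1}\boldsymbol{Y}^\top = \boldsymbol{Z}(\boldsymbol{Z}^\top\boldsymbol{Z})^{-2}\boldsymbol{Z}^\top$ from the proof of \Cref{thm:expected_approx_1d}, yields $\ev_Y{\| \hat{\Theta}^\top Y - \Theta_*^\top Y \|_2^2} = \| (\boldsymbol{Z}^\top\boldsymbol{Z})^{-1} \boldsymbol{Z}^\top \boldsymbol{E} \|_F^2 = \trace{\boldsymbol{E}^\top \boldsymbol{Z} (\boldsymbol{Z}^\top\boldsymbol{Z})^{-2} \boldsymbol{Z}^\top \boldsymbol{E}}$. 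For the Gaussian model, \Cref{prop:posterior_gaussian_model} shows that the rows of $\boldsymbol{E}$ are independent, each $N(0,C_{EE})$, and independent of $\boldsymbol{Y}$ and hence of $\boldsymbol{Z}$, while the rows of $\boldsymbol{Z} = \boldsymbol{Y} C_{YY}^{-1/2}$ are independent standard normal vectors; in particular $\boldsymbol{Z}^\top\boldsymbol{Z}$ is almost surely invertible once $n(M) \geq M$, which holds for all large $M$ because $M/n(M) \to \gamma < 1$. I will also use that $0 \preceq C_{EE} \preceq C_{XX}$, so that $\trace{C_{EE}}$, $\trace{C_{EE}^2}$ and $\| C_{EE}\|_2$ stay bounded as $M \to \infty$.

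Second, I would condition on $\boldsymbol{Z}$. Then the quantity above is a quadratic form in the Gaussian matrix $\boldsymbol{E}$; using $\ev{\boldsymbol{E}\boldsymbol{E}^\top} = \trace{C_{EE}}\,\id$ and the cyclic property of the trace, its conditional mean is $\mu_{\boldsymbol{Z}} \coloneqq \trace{C_{EE}} \trace{(\boldsymbol{Z}^\top\boldsymbol{Z})^{-1}}$, and a short computation gives its conditional variance as $v_{\boldsymbol{Z}} \coloneqq 2\trace{C_{EE}^2}\trace{(\boldsymbol{Z}^\top\boldsymbol{Z})^{-2}}$. Because $\ev{\boldsymbol{E}\mid\boldsymbol{Y}}=0$, \Cref{prop:cond_out_of_sample_miss} applied with $C = C_{EE}$ already provides the upper tail (its bias term $\varepsilon_{\text{bias}}^{\boldsymbol{Y}}$ vanishes); a matching lower tail follows from a two-sided tail bound for quadratic forms of Gaussian vectors, such as \cite[Lemma~1]{laurent2000adaptive} applied conditionally on $\boldsymbol{Z}$, or, more simply, from the conditional Chebyshev inequality, which bounds the conditional probability that the quadratic form deviates from $\mu_{\boldsymbol{Z}}$ by more than $\xi$ by $v_{\boldsymbol{Z}}/\xi^2$.

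Third, I would pass to the limit. The hypotheses of \Cref{thm:conc_trace_prec} hold because the entries of $\boldsymbol{Z}$ are standard normal (in particular centred, with unit variance and finite fourth moment), so $\trace{(\boldsymbol{Z}^\top\boldsymbol{Z})^{-1}} \to \gamma/(1-\gamma)$ almost surely; by the Bai--Yin theorem, as used in the proof of \Cref{rem:asymptotic_err}, one has $\| (\boldsymbol{Z}^\top\boldsymbol{Z})^{-1} \|_2 \to 0$ almost surely, and since $\trace{(\boldsymbol{Z}^\top\boldsymbol{Z})^{-2}} \leq \| (\boldsymbol{Z}^\top\boldsymbol{Z})^{-1}\|_2 \trace{(\boldsymbol{Z}^\top\boldsymbol{Z})^{-1}}$ this forces $v_{\boldsymbol{Z}} \to 0$ almost surely. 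Hence $\mu_{\boldsymbol{Z}} \to \trace{C_{EE}}\gamma/(1-\gamma)$ almost surely, while $\ev_Y{\| \hat{\Theta}^\top Y - \Theta_*^\top Y \|_2^2} - \mu_{\boldsymbol{Z}} \to 0$ in probability, the latter by taking expectations in the conditional Chebyshev bound and invoking dominated convergence (the integrand is bounded by $1$ and tends to $0$ almost surely, since $v_{\boldsymbol{Z}}$ is bounded and tends to $0$). The triangle inequality then gives $\ev_Y{\| \hat{\Theta}^\top Y - \Theta_*^\top Y \|_2^2} \to \trace{C_{EE}}\gamma/(1-\gamma)$ in probability, which is the claim.

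The main obstacle is the lower tail: \Cref{thm:tail_quadratic_subgaussians}, and therefore \Cref{prop:cond_out_of_sample_miss}, is one-sided, so to rule out the generalization error being asymptotically too small one genuinely needs two-sided control of the quadratic form, which is exactly where the conditional Gaussianity of $\boldsymbol{E}$ given $\boldsymbol{Z}$ is exploited. A secondary point requiring care is the passage from conditional to unconditional probabilities, and the verification that the prefactors $\trace{C_{EE}}$, $\trace{C_{EE}^2}$, $\| C_{EE}\|_2$ remain bounded along the sequence (e.g.\ via $C_{EE}\preceq C_{XX}$), so that the almost sure convergence of the spectral quantities of $\boldsymbol{Z}^\top\boldsymbol{Z}$ is not destroyed by an unbounded multiplicative factor.
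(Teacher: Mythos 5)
Your proposal is correct and follows essentially the same route as the paper: reduce the generalization error to the closed form $\| (\boldsymbol{Z}^\top\boldsymbol{Z})^{-1}\boldsymbol{Z}^\top\boldsymbol{E}\|_F^2$, exploit the conditional Gaussianity of $\boldsymbol{E}$ given $\boldsymbol{Z}$ to obtain two-sided control of the quadratic form (the paper uses \cite[Lemma 1]{laurent2000adaptive} via the block-diagonal construction, exactly your first option), and then combine \Cref{thm:conc_trace_prec}, the Bai--Yin theorem, and the bound $\trace{(\boldsymbol{Z}^\top\boldsymbol{Z})^{-2}} \leq \|(\boldsymbol{Z}^\top\boldsymbol{Z})^{-1}\|_2\trace{(\boldsymbol{Z}^\top\boldsymbol{Z})^{-1}}$ to pass to the limit. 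Your conditional Chebyshev alternative (with the exact conditional variance $2\trace{C_{EE}^2}\trace{(\boldsymbol{Z}^\top\boldsymbol{Z})^{-2}}$) is a valid and slightly more elementary substitute for the Laurent--Massart step, since only convergence in probability is needed, and your remark about the boundedness of $\trace{C_{EE}}$, $\trace{C_{EE}^2}$, $\|C_{EE}\|_2$ via $C_{EE}\preceq C_{XX}$ makes explicit a point the paper leaves implicit.
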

\begin{proof}
The proof is similar to that of \Cref{rem:asymptotic_err}, except that we need a two two-sided bound. We divide the proof into three steps. In the first step of the proof, we derive the two-sided tail bound that
\begin{multline}\label{eq:gaussian_model_tail_bound}
\left| \ev_Y{ \| \hat{\Theta}^\top Y - \Theta_*^\top Y \|_2^2} - \trace{C_{EE}}\trace{(\boldsymbol{Z}^\top \boldsymbol{Z})^{-1}} \right| \\ \leq   2 \sqrt{\trace{C_{EE}^2}\trace{(\boldsymbol{Z}^\top \boldsymbol{Z})^{-2}}\ln(2/\delta_1) }  + 2 \|C_{EE} \|_2 \|(\boldsymbol{Z}^\top \boldsymbol{Z})^{-1}\|_2 \ln(2/\delta_1)
\end{multline}
with probability $1-\delta_1$ for $\delta_1 \in (0,1)$. To derive this bound, we first recall from  the proof of \Cref{prop:cond_out_of_sample_miss} that $\ev_Y{ \| \hat{\Theta}^\top Y - \Theta_*^\top Y \|_2^2} = \| (\boldsymbol{Z}^\top \boldsymbol{Z})^{-1} \boldsymbol{Z}^\top \boldsymbol{E} \|_F^2$. Then, as in the proof of \Cref{lem:tail_inequality_quadratic_max_forms}, we observe that $\| (\boldsymbol{Z}^\top \boldsymbol{Z})^{-1} \boldsymbol{Z}^\top \boldsymbol{E} \|_F^2 = \| S \vec{G} \|_2^2$, where $S \in \mathbb{R}^{M N \times n N} $ is the block diagonal matrix $S \coloneqq \text{diag}\left( d_1^{1/2} (\boldsymbol{Z}^\top \boldsymbol{Z})^{-1} \boldsymbol{Z}^\top, \dots, d_N^{1/2} (\boldsymbol{Z}^\top \boldsymbol{Z})^{-1} \boldsymbol{Z}^\top \right)$, $\vec{G}$ is a random vector in $\mathbb{R}^{nN}$ with independent standard normal random variables as entries, and $d_1, \dots, d_N$ are the eigenvalues of $C_{EE}$. Then by \cite[Lemma 1]{laurent2000adaptive} we have
\begin{equation}\label{eq:tail_bound_standard_gauss}
\left| \| S \vec{G} \|_2^2 - \trace{S^\top S} \right| \leq   2 \sqrt{\trace{(S^\top S)^2}\ln(2/\delta_1)}   + 2 \|S^\top S\|_2 \ln(2/\delta_1)
\end{equation}
with probability at least $1-\delta_1$. To complete the proof of  \eqref{eq:gaussian_model_tail_bound}, we argue as in the proof of \Cref{lem:tail_inequality_quadratic_max_forms} and apply  \Cref{lem:trace_and_trace_square_inequalities} to deduce that \eqref{eq:tail_bound_standard_gauss} is equivalent to \eqref{eq:gaussian_model_tail_bound}. For the second step of the proof, we first recall from the proof of \Cref{rem:asymptotic_err} that, almost surely,  $\trace{(\boldsymbol{Z}^\top \boldsymbol{Z})^{-1}} \to \gamma/(1-\gamma)$ and 
$\| \boldsymbol{Z}^\top \boldsymbol{Z} \|_2 \to 0$ as $M \to \infty$. Since almost sure convergence implies convergence in probability, for arbitrary $\xi_1, \xi_2 \geq 0$ and $\delta_2 \in (0,1)$ we can find $m \in \mathbb{N}$ such that for all $M \geq m$ we have
\begin{equation}\label{eq:bound2_gauss_conc}
\begin{split}
\left| \trace{C_{EE}}\trace{(\boldsymbol{Z}^\top \boldsymbol{Z})^{-1}}  - \trace{C_{EE}} \gamma/(1-\gamma) \right| &\leq \xi_1, \\
 2 \sqrt{\trace{C_{EE}^2}\trace{(\boldsymbol{Z}^\top \boldsymbol{Z})^{-2}}\ln(2/\delta_1) }  + 2 \|C_{EE} \|_2 \|(\boldsymbol{Z}^\top \boldsymbol{Z})^{-1}\|_2 \ln(2/\delta_1) &\leq \xi_2.
\end{split}
\end{equation}
with probability at least $1-\delta_2$. By applying the union bound to the union of the complements of the events in \eqref{eq:gaussian_model_tail_bound} and \eqref{eq:bound2_gauss_conc}, we deduce that for $M \geq m$
\begin{equation*}
\mathbb{P} \left( \left| \ev_Y{ \| \hat{\Theta}^\top Y - \Theta_*^\top Y \|_2^2}  - \trace{C_{EE}} \gamma/(1-\gamma) \right| \leq \xi_1 + \xi_2 \right) \geq 1- \delta_1 - \delta_2.
\end{equation*}
Since $\xi_1, \xi_2 \geq 0$ and $\delta_1, \delta_2 \in (0,1)$ are arbitrary, the proof is complete. \end{proof}
The following theorem, which is the main result of this work,  provides a tail bound for the mean squared error of the least squares estimator.
\begin{thm}[Tail bounds for the approximation error]\label{thm:out_of_sample_error}
Assume that 
\begin{enumerate}[label=\roman*)]
\item there exists a symmetric positive-definite matrix $C \in \mathbb{R}^{N \times N}$ such that, almost surely,
\begin{equation*}
\ev{ \exp \left( \alpha^\top C^{-1/2} (E\mid Y - \ev{E \mid Y}) \right)} \leq \exp\left(\| \alpha \|_2^2 / 2 \right) \quad \text{for all } \alpha \in \mathbb{R}^N,
\end{equation*}\label{item:cond_MSE_MMSE_bound}
\item there is $\mu=(\mu_1, \dots, \mu_N)  \in \mathbb{R}^N$ such that $|(E \mid Y)_i | \leq \mu_i$ almost surely for $1 \leq i \leq N$, \label{item:approx_err}  
\item $ C_{YY}^{-1/2} Y $ is a $\rho$-sub-Gaussian random vector.
\end{enumerate}
Then, for $t \geq 0$ such that $\sqrt{n}- c \rho^2 ( \sqrt{M} + \sqrt{t}) > 0$ and $s \geq 0$, we have 
\begin{equation*}
\ev_Y{ \| \hat{\Theta}^\top Y - \Theta_*^\top Y \|_2^2} \leq  \varepsilon_{\text{err}} + \varepsilon_{\text{bias}}  ,
\end{equation*}
with probability at least 
$
1 - 3 \exp(-t) - N\exp(-s)$, where 
\begin{align*}
\varepsilon_{\text{err}} &\coloneqq  \frac{\trace{C} M + 2 \sqrt{M\trace{C^2} t}  + 2 \|C\|_2 t}{(\sqrt{n} - c \rho^2 (\sqrt{M} +  \sqrt{t}))^2}, \\
\varepsilon_{\text{bias}} &\coloneqq  \frac{ (3/2) \rho^2\| \mu\|_2^2 \, n \left(M + 2 \sqrt{M s} + 2 s \right)}{(\sqrt{n} - c \rho^2 (\sqrt{M}+\sqrt{t}))^4} \left(1+\frac{2 \sqrt{t} \|C\|_2}{\sqrt{M\trace{C^2}}} \left(\frac{\sqrt{n} + c \rho^2(\sqrt{M} + \sqrt{t})} {\sqrt{n}-c\rho^2 ( \sqrt{M}+\sqrt{t})} \right)^2 \right),
\end{align*}
and $c>0$ is as in \Cref{thm:sub_gaussian-rows}.

\end{thm}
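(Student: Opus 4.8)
The plan is to peel the randomness off in three layers and then make a routine substitution. Write $\boldsymbol{Z}\coloneqq\boldsymbol{Y}C_{YY}^{-1/2}$, $\sigma_-\coloneqq\sqrt n-c\rho^2(\sqrt M+\sqrt t)$ and $\sigma_+\coloneqq\sqrt n+c\rho^2(\sqrt M+\sqrt t)$. The three layers are: the singular-value control of $\boldsymbol{Z}$ from \Cref{thm:sub_gaussian-rows}; the conditional approximation-error bound from \Cref{prop:cond_out_of_sample_miss}; and the control of the bias quantity $\|(\boldsymbol{Z}^\top\boldsymbol{Z})^{-1}\boldsymbol{Z}^\top\ev{\boldsymbol{E}\mid\boldsymbol{Y}}\|_F^2$ from \Cref{cor:weighted_sum_sub-gaussians}. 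A union bound over the corresponding failure events, followed by substitution of the singular-value bounds into \eqref{eq:eps_err}--\eqref{eq:eps_bias}, then gives the claim.

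First I would note that $\cov{C_{YY}^{-1/2}Y}=\id$, so by hypothesis (iii) the rows of $\boldsymbol{Z}$ are independent, isotropic and $\rho$-sub-Gaussian; \Cref{thm:sub_gaussian-rows} then produces an event $\mathcal B$ of probability at least $1-2e^{-t}$ on which $\sigma_-\le\sigma_{\min}(\boldsymbol{Z})\le\sigma_{\max}(\boldsymbol{Z})\le\sigma_+$. Since $\sigma_->0$ by assumption, $\boldsymbol{Z}$ is injective on $\mathcal B$, so $\hat\Theta$ is well defined there and \Cref{prop:cond_out_of_sample_miss} may be invoked conditionally on $\boldsymbol{Y}$: its requirement on the rows of $\boldsymbol{E}$ is exactly hypothesis (i) applied to each sample (with the conditional mean $\ev{E_i\mid Y_i}$ in place of $\ev{E_i}$), together with the independence of the samples. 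This yields an event $\mathcal A$, of conditional probability at least $1-e^{-t}$ on $\{\boldsymbol{Y}\in\mathcal B\}$, on which $\ev_Y{\|\hat\Theta^\top Y-\Theta_*^\top Y\|_2^2}\le\varepsilon_{\text{err}}^{\boldsymbol{Y}}+\varepsilon_{\text{bias}}^{\boldsymbol{Y}}$.

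Next I would treat the bias quantity. The $j$-th column of $\boldsymbol{Z}^\top\ev{\boldsymbol{E}\mid\boldsymbol{Y}}$ equals $\sum_{i=1}^n w_i^{(j)}Z_i$ with $Z_i\coloneqq C_{YY}^{-1/2}Y_i$ and $w_i^{(j)}\coloneqq(\ev{E_i\mid Y_i})_j$, a weighted sum of independent copies of $(w^{(j)},Z)$ where $w^{(j)}\coloneqq(\ev{E\mid Y})_j$ and $Z\coloneqq C_{YY}^{-1/2}Y$. Here $\ev{Z}=0$ since $\ev{Y}=0$, one has $\ev{w^{(j)}Z}=C_{YY}^{-1/2}\ev{(\ev{E\mid Y})_j\,Y}=0$ by the tower property and the orthogonality $\ev{EY^\top}=0$ of the LMMSE estimator, and $|w^{(j)}|\le\mu_j$ almost surely by hypothesis (ii). Applying \Cref{cor:weighted_sum_sub-gaussians} to each $j$ and taking a union bound over $j\in\{1,\dots,N\}$ produces an event $\mathcal C$ of probability at least $1-Ne^{-s}$ on which $\|\boldsymbol{Z}^\top\ev{\boldsymbol{E}\mid\boldsymbol{Y}}\|_F^2=\sum_j\|\sum_i w_i^{(j)}Z_i\|_2^2\le(3/2)\rho^2\|\mu\|_2^2\,n(M+2\sqrt{Ms}+2s)$. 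On $\mathcal B$ one has $\trace{(\boldsymbol{Z}^\top\boldsymbol{Z})^{-1}}\le M/\sigma_-^2$, $\trace{(\boldsymbol{Z}^\top\boldsymbol{Z})^{-2}}\le M/\sigma_-^4$, $\|(\boldsymbol{Z}^\top\boldsymbol{Z})^{-1}\|_2\le1/\sigma_-^2$ and the lower bound $\trace{(\boldsymbol{Z}^\top\boldsymbol{Z})^{-2}}\ge M/\sigma_+^4$; combining these with $\|(\boldsymbol{Z}^\top\boldsymbol{Z})^{-1}\boldsymbol{Z}^\top\ev{\boldsymbol{E}\mid\boldsymbol{Y}}\|_F^2\le\|(\boldsymbol{Z}^\top\boldsymbol{Z})^{-1}\|_2^2\|\boldsymbol{Z}^\top\ev{\boldsymbol{E}\mid\boldsymbol{Y}}\|_F^2$ and the bound from $\mathcal C$, and substituting into \eqref{eq:eps_err} and \eqref{eq:eps_bias}, shows $\varepsilon_{\text{err}}^{\boldsymbol{Y}}\le\varepsilon_{\text{err}}$ and $\varepsilon_{\text{bias}}^{\boldsymbol{Y}}\le\varepsilon_{\text{bias}}$ on $\mathcal B\cap\mathcal C$. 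Finally, $\mathcal B^c$, $\mathcal B\cap\mathcal C^c$ and $\mathcal B\cap\mathcal C\cap\mathcal A^c$ are disjoint with probabilities at most $2e^{-t}$, $Ne^{-s}$ and $e^{-t}$ (the last because $\mathcal B,\mathcal C$ are $\sigma(\boldsymbol{Y})$-measurable and $\mathbb{P}(\mathcal A^c\mid\boldsymbol{Y})\le e^{-t}$ on $\mathcal B$), so the asserted inequality holds on $\mathcal A\cap\mathcal B\cap\mathcal C$, an event of probability at least $1-3e^{-t}-Ne^{-s}$.

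I expect the only genuinely delicate point to be the term $\big(\|C\|_2^2\,\|(\boldsymbol{Z}^\top\boldsymbol{Z})^{-1}\|_2^2\,t\,/\,(\trace{C^2}\trace{(\boldsymbol{Z}^\top\boldsymbol{Z})^{-2}})\big)^{1/2}$ appearing inside $\varepsilon_{\text{bias}}^{\boldsymbol{Y}}$: since it sits in the numerator it must be bounded from above, which forces the \emph{lower} bound $\trace{(\boldsymbol{Z}^\top\boldsymbol{Z})^{-2}}\ge M/\sigma_{\max}(\boldsymbol{Z})^4$ and hence makes the ratio $(\sigma_+/\sigma_-)^2=\big((\sqrt n+c\rho^2(\sqrt M+\sqrt t))/(\sqrt n-c\rho^2(\sqrt M+\sqrt t))\big)^2$ surface in the final $\varepsilon_{\text{bias}}$. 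Getting this two-sided singular-value bookkeeping to line up, together with keeping the conditioning on $\boldsymbol{Y}$ straight when combining the $\sigma(\boldsymbol{Y})$-measurable events $\mathcal B,\mathcal C$ with the conditional event $\mathcal A$, is where the care is required; the remaining manipulations are direct substitution.
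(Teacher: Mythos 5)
Your proposal is correct and follows essentially the same route as the paper's proof: condition on $\boldsymbol{Y}$ and invoke \Cref{prop:cond_out_of_sample_miss}, control the spectrum of $\boldsymbol{Z}^\top\boldsymbol{Z}$ via \Cref{thm:sub_gaussian-rows}, bound the columns of $\boldsymbol{Z}^\top\ev{\boldsymbol{E}\mid\boldsymbol{Y}}$ via \Cref{cor:weighted_sum_sub-gaussians} with a union bound over the $N$ columns, and substitute the two-sided eigenvalue bounds (including the lower bound on $\trace{(\boldsymbol{Z}^\top\boldsymbol{Z})^{-2}}$) into $\varepsilon_{\text{err}}^{\boldsymbol{Y}}$ and $\varepsilon_{\text{bias}}^{\boldsymbol{Y}}$. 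Your verification that $\ev{(\ev{E\mid Y})_j\,C_{YY}^{-1/2}Y}=0$ via the LMMSE orthogonality, and your disjoint-event accounting for mixing the conditional event with the $\sigma(\boldsymbol{Y})$-measurable ones, are welcome details that the paper's proof leaves implicit.
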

\begin{proof}
By \Cref{prop:cond_out_of_sample_miss}, we only need to establish suitable tail bounds for the quantities $\varepsilon_{\text{err}}^{\boldsymbol{Y}}$ and $\varepsilon_{\text{bias}}^{\boldsymbol{Y}}$ defined in \eqref{eq:eps_err} and \eqref{eq:eps_bias}. For this purpose, we first recall that  
\begin{equation}\label{eq:trace_bounds}
\begin{split}
\| (\boldsymbol{Z}^\top \boldsymbol{Z})^{-1} \|_2 = \lambda_{\min}(\boldsymbol{Z}^\top \boldsymbol{Z})^{-1}, \quad 
\trace{(\boldsymbol{Z}^\top \boldsymbol{Z})^{-1}} \leq M \lambda_{\min}(\boldsymbol{Z}^\top \boldsymbol{Z})^{-1}, \\
\trace{(\boldsymbol{Z}^\top \boldsymbol{Z})^{-2}} \leq M \lambda_{\min}(\boldsymbol{Z}^\top \boldsymbol{Z})^{-2}, \text{ and } \quad  
\trace{(\boldsymbol{Z}^\top \boldsymbol{Z})^{-2}} \geq M \lambda_{\max}(\boldsymbol{Z}^\top \boldsymbol{Z})^{-2},
\end{split}
\end{equation}
where $\lambda_{\min}(\boldsymbol{Z}^\top \boldsymbol{Z})$ and $\lambda_{\max}(\boldsymbol{Z}^\top \boldsymbol{Z})$ are the minimal and maximal eigenvalues of $\boldsymbol{Z}^\top \boldsymbol{Z}$. Moreover, we have
\begin{equation}\label{eq:product_f_bound}
\| (\boldsymbol{Z}^\top \boldsymbol{Z})^{-1} \boldsymbol{Z}^\top \ev{\boldsymbol{E} \mid \boldsymbol{Y}} \|_F^2 \leq  \| (\boldsymbol{Z}^\top \boldsymbol{Z})^{-1} \|_2^2  \| \boldsymbol{Z}^\top \ev{\boldsymbol{E} \mid \boldsymbol{Y}} \|_F^2.
\end{equation}
By definition of $\varepsilon_{\text{err}}^{\boldsymbol{Y}}$ and  $\varepsilon_{\text{bias}}^{\boldsymbol{Y}}$, it follows that to prove the claim, we only need to derive appropriate bounds for $\lambda_{\min}(\boldsymbol{Z}^\top \boldsymbol{Z})^{-1}$, $\lambda_{\max}(\boldsymbol{Z}^\top \boldsymbol{Z})$, and $\| \boldsymbol{Z}^\top \ev{\boldsymbol{E} \mid \boldsymbol{Y}} \|_F^2$:
\paragraph{Bounds for $\lambda_{\min}(\boldsymbol{Z}^\top \boldsymbol{Z})$ and $\lambda_{\max}(\boldsymbol{Z}^\top \boldsymbol{Z})$:} By \Cref{thm:sub_gaussian-rows}, we have
\begin{equation}\label{eq:singular_values_in_proof}
(\sqrt{n}- c \rho^2 (\sqrt{M} + \sqrt{t}))^2  \leq \lambda_{\min}(\boldsymbol{Z}^\top \boldsymbol{Z}) \leq \lambda_{\max}(\boldsymbol{Z}^\top \boldsymbol{Z}) \leq (\sqrt{n} + c \rho^2(\sqrt{M} + \sqrt{t}))^2
\end{equation}
with probability at least $1-2 e^{-t}$. 

\paragraph{Bounds for $\| \boldsymbol{Z}^\top \mathbb{E}[\boldsymbol{E} \mid \boldsymbol{Y}] \|_F^2$:}
By applying \Cref{cor:weighted_sum_sub-gaussians} to the $i$th column $(\boldsymbol{Z}^\top \ev{\boldsymbol{E} \mid \boldsymbol{Y}})^i$ of $\boldsymbol{Z}^\top \ev{\boldsymbol{E} \mid \boldsymbol{Y}}$ for $1 \leq i \leq N$, we obtain 
\begin{equation}\label{eq:approx_error_columns}
\| (\boldsymbol{Z}^\top \ev{\boldsymbol{E} \mid \boldsymbol{Y}})^i \|_2^2  \leq (3/2) \rho^2 \mu_i^2 (n M + 2 n\sqrt{M s} + 2n s) 
\end{equation}
with probability at least $1-e^{-t}$. By applying the union bound to the union of the complements of the events in \eqref{eq:approx_error_columns} over all $1 \leq i \leq N$, we obtain  
\begin{equation}\label{eq:approx_error}
\| \boldsymbol{Z}^\top \ev{\boldsymbol{E} \mid \boldsymbol{Y}} \|_F^2  \leq (3/2) \rho^2 \| \mu \|_2^2  (n M + 2n \sqrt{M s} + 2n s)
\end{equation}
with probability at least $1- N e^{-s}$. 
\paragraph{Union bound:} 
By applying the union bound to the union of the complements of the events in \eqref{eq:conditioned_bound}, \eqref{eq:singular_values_in_proof}, and \eqref{eq:approx_error}, we obtain that the intersection of the events in \eqref{eq:conditioned_bound}, \eqref{eq:singular_values_in_proof}, and \eqref{eq:approx_error} occurs with probability at least $1- 3 e^{-t} - N e^{-s}$. Using \crefrange{eq:trace_bounds}{eq:product_f_bound}, it follows that with at least the same probability,   
\begin{align*}
\varepsilon_{\text{err}}^{\boldsymbol{Y}} &\leq  \frac{\trace{C} M + 2 \sqrt{M\trace{C^2} t}  + 2 \|C\|_2 t}{(\sqrt{n} - c \rho^2 (\sqrt{M} +  \sqrt{t}))^2}, \\
\varepsilon_{\text{bias}}^{\boldsymbol{Y}} &\leq  \frac{ (3/2) \rho^2\| \mu\|_2^2 \, n \left(M + 2 \sqrt{M s} + 2 s \right)}{(\sqrt{n} - c \rho^2 (\sqrt{M}+\sqrt{t}))^4} \left(1+\frac{2 \sqrt{t} \|C\|_2}{\sqrt{M\trace{C^2}}} \left(\frac{\sqrt{n} + c \rho^2(\sqrt{M} + \sqrt{t})} {\sqrt{n}-c\rho^2 ( \sqrt{M}+\sqrt{t})} \right)^2 \right).
\end{align*}
The proof is complete. 
\end{proof}
In view of \Cref{prop:estimation_error}, the hypotheses in \Cref{thm:out_of_sample_error} can be interpreted as follows: assumption \ref{item:cond_MSE_MMSE_bound} is equivalent to the assumption that the conditional mean squared error of the MMSE estimator is sub-Gaussian with companion matrix bounded by $C \in \mathbb{R}^{N \times N}$ in the sense of the Loewner ordering. Assumption in \ref{item:approx_err} is equivalent to the assumption that, for $1 \leq i \leq N$, the absolute value of the $i$th component of the difference between the LMMSE and the MMSE estimates is bounded by $\mu_i \geq 0$.

\subsection{Consequences of \Cref{thm:out_of_sample_error}}
We state a few consequences of \Cref{thm:out_of_sample_error}.
\begin{prop}[Answer to \Cref{quest:conf_err} for linear model]\label{rem:answer_question_2}
Suppose that the hypotheses of \Cref{thm:out_of_sample_error} hold. Moreover, assume that the LMMSE and the MMSE estimators coincide (which is equivalent to the assumption that $\ev{E \mid Y } = 0$ almost surely). Then
\begin{equation*}
n = \left(\sqrt{\frac{ \trace{C} M + 2 \sqrt{M \trace{C^2} \ln{(3/\delta)} }    + 2 \| C \|_2 \ln{(3/\delta)}}{ \trace{C_{EE}} \varepsilon}} + c \rho^2\left(\sqrt{M} + \sqrt{\ln{(3/\delta)}}\right) \right)^2 
\end{equation*}
samples are sufficient in \Cref{quest:conf_err}.
\end{prop}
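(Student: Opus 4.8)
The plan is to specialize the argument behind \Cref{thm:out_of_sample_error} to the case $\ev{E\mid Y}=0$, in which the bias contribution disappears, and then to tune the free parameter $t$ and solve for $n$. First I would apply the error decomposition \eqref{MSE_decompostion_linear} with $f=\hat f$, which shows that \eqref{eq:req_quest2} holds if and only if $\ev_Y\|\hat\Theta^\top Y - \Theta_*^\top Y\|_2^2 \le \varepsilon\,\trace{C_{EE}}$; hence it suffices to bound the latter quantity by $\varepsilon\,\trace{C_{EE}}$ with probability at least $1-\delta$. By hypothesis the LMMSE and the MMSE estimators coincide, so \eqref{eq:ev_estimation_error} gives $\ev{E\mid Y}=0$ almost surely and therefore $\ev{\boldsymbol{E}\mid\boldsymbol{Y}}=0$. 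Consequently the term $\varepsilon_{\text{bias}}^{\boldsymbol{Y}}$ in \eqref{eq:eps_bias} vanishes identically, and in the proof of \Cref{thm:out_of_sample_error} the estimate for $\|\boldsymbol{Z}^\top\ev{\boldsymbol{E}\mid\boldsymbol{Y}}\|_F^2$, together with the parameter $s$ and the factor $N e^{-s}$, plays no role here.

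Next I would combine \Cref{prop:cond_out_of_sample_miss} (whose bias term is now zero) with \Cref{thm:sub_gaussian-rows} and the inequalities in \eqref{eq:trace_bounds}, exactly as in the proof of \Cref{thm:out_of_sample_error}. On the intersection of the event in \eqref{eq:conditioned_bound} (probability at least $1-e^{-t}$) with the event $\lambda_{\min}(\boldsymbol{Z}^\top\boldsymbol{Z}) \ge (\sqrt{n} - c\rho^2(\sqrt{M}+\sqrt{t}))^2$ (probability at least $1-2e^{-t}$), a union bound shows that with probability at least $1-3e^{-t}$ one has
\[
\ev_Y\|\hat\Theta^\top Y - \Theta_*^\top Y\|_2^2 \le \frac{\trace{C}M + 2\sqrt{M\trace{C^2}\,t} + 2\|C\|_2\,t}{(\sqrt{n} - c\rho^2(\sqrt{M}+\sqrt{t}))^2},
\]
which is precisely the quantity $\varepsilon_{\text{err}}$ appearing in \Cref{thm:out_of_sample_error} (with $\varepsilon_{\text{bias}}=0$).

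Finally I would set $t = \ln(3/\delta)$, so that $3e^{-t}=\delta$, and require the right-hand side above to be at most $\varepsilon\,\trace{C_{EE}}$. This is equivalent to
\[
\big(\sqrt{n} - c\rho^2(\sqrt{M}+\sqrt{\ln(3/\delta)})\big)^2 \ge \frac{\trace{C}M + 2\sqrt{M\trace{C^2}\ln(3/\delta)} + 2\|C\|_2\ln(3/\delta)}{\varepsilon\,\trace{C_{EE}}};
\]
taking square roots of the two (nonnegative) sides and isolating $\sqrt{n}$ yields the value of $n$ stated in the proposition, and this choice automatically satisfies $\sqrt{n} - c\rho^2(\sqrt{M}+\sqrt{\ln(3/\delta)})>0$, so that \Cref{thm:sub_gaussian-rows} and \Cref{prop:cond_out_of_sample_miss} are indeed applicable. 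I do not expect a genuine obstacle: all of the substance is already contained in \Cref{thm:out_of_sample_error}, and the only points that need care are verifying that the bias term and the $N e^{-s}$ factor really drop out under $\ev{E\mid Y}=0$, and carrying out the elementary inversion of the quadratic-in-$\sqrt{n}$ inequality while keeping track of the positivity side condition.
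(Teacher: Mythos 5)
Your proposal is correct and follows essentially the same route as the paper, which proves the proposition by exactly this kind of direct calculation from the error bound in \Cref{thm:out_of_sample_error}: the bias term vanishes under $\ev{E \mid Y}=0$, one sets $t=\ln(3/\delta)$, and one inverts the resulting quadratic-in-$\sqrt{n}$ inequality, using \eqref{MSE_decompostion_linear} to translate the bound on the approximation error into the requirement of \Cref{quest:conf_err}. Your additional remarks on dropping the $N e^{-s}$ term and on the positivity side condition are exactly the points of care the paper's terse proof leaves implicit.
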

\begin{proof}
This follows by direct calculations from the error bound in \Cref{thm:out_of_sample_error}.
\end{proof}

\begin{prop}\label{rem:answer_question_2_Gauss}
For the Gaussian model, 
\begin{equation}\label{eq:estimate_Gaussian_2}
n= \left(\sqrt{ \frac{\trace{C_{EE}} M  + 2 \sqrt{M \trace{C_{EE}^2} \ln{(3/\delta)  }}     + 2 \| C_{EE} \|_2 \ln{(3/\delta)}}{\trace{C_{EE}} \varepsilon}} + \sqrt{M} + \sqrt{\ln{(3/\delta)}} \right)^2 
\end{equation}
samples are sufficient in \Cref{quest:conf_err}. Moreover, the estimate in \eqref{eq:sample_bound} is an upper bound for the estimate in \eqref{eq:estimate_Gaussian_2}.
 \end{prop}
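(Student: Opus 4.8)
The plan is to read off the first assertion from \Cref{rem:answer_question_2} with explicit constants, and then to settle the second assertion by reducing it to a one-line eigenvalue inequality.

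For the first assertion I would first check that the Gaussian model satisfies the hypotheses of \Cref{thm:out_of_sample_error}, and hence of \Cref{rem:answer_question_2}, with the explicit choices $C = C_{EE}$, $\rho = 1$, and $c = 1$. By \Cref{prop:posterior_gaussian_model}, the conditional estimation error $E \mid Y$ is, independently of $Y$, Gaussian with mean zero and covariance matrix $C_{EE}$; hence $C_{EE}^{-1/2}(E \mid Y)$ is a standard Gaussian random vector and is therefore $1$-sub-Gaussian, so hypothesis \ref{item:cond_MSE_MMSE_bound} holds with $C = C_{EE}$. The same proposition gives $\ev{E \mid Y} = 0$, so the LMMSE and MMSE estimators coincide (as required in \Cref{rem:answer_question_2}) and hypothesis \ref{item:approx_err} holds with $\mu = 0$. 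Finally, $Y$ is a zero-mean Gaussian vector with covariance $C_{YY}$, so $C_{YY}^{-1/2} Y$ is a standard Gaussian vector, which is $1$-sub-Gaussian (hence $\rho = 1$); moreover the rows of $\boldsymbol{Z} = \boldsymbol{Y} C_{YY}^{-1/2}$ are independent standard normal vectors, so by the last sentence of \Cref{thm:sub_gaussian-rows} the constant there may be taken to be $c = 1$. Substituting $C = C_{EE}$, $\rho = 1$, and $c = 1$ into the sample size of \Cref{rem:answer_question_2} reproduces \eqref{eq:estimate_Gaussian_2} verbatim, which proves the first assertion.

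For the second assertion I would observe that both \eqref{eq:estimate_Gaussian_2} and \eqref{eq:sample_bound} have the form $\bigl(\sqrt{a} + \sqrt{M} + \sqrt{\ln(3/\delta)}\bigr)^2$ with $a \geq 0$, and that $a \mapsto \bigl(\sqrt{a} + b\bigr)^2$ is nondecreasing on $[0,\infty)$ whenever $b \geq 0$; hence it suffices to compare the two quantities $a$ under the outer square root. Dividing the numerator and denominator of that quantity in \eqref{eq:estimate_Gaussian_2} by $\trace{C_{EE}}$ and recalling $\nu = \|C_{EE}\|_2/\trace{C_{EE}}$, the desired inequality reduces to $\trace{C_{EE}^2}/\trace{C_{EE}} \leq \|C_{EE}\|_2$. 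This is the standard bound $\trace{B^2} \leq \|B\|_2\trace{B}$ applied to the positive semidefinite matrix $B = C_{EE}$: writing $\lambda_1 \geq \cdots \geq \lambda_N \geq 0$ for the eigenvalues of $C_{EE}$, one has $\trace{C_{EE}^2} = \sum_i \lambda_i^2 \leq \lambda_1 \sum_i \lambda_i = \|C_{EE}\|_2\trace{C_{EE}}$.

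There is no genuine difficulty here; the only two points that need a little care are the justification that the Gaussian-specific constants $c = 1$ and $\rho = 1$ are admissible (which follows from the last sentence of \Cref{thm:sub_gaussian-rows} together with the observation that the rows of $\boldsymbol{Z}$ are independent standard normal vectors) and the bookkeeping that isolates the comparison down to the eigenvalue inequality $\trace{C_{EE}^2} \leq \|C_{EE}\|_2\trace{C_{EE}}$.
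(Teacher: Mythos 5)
Your proposal is correct and follows the same route as the paper: the first claim as a corollary of \Cref{rem:answer_question_2} with $C=C_{EE}$, $\rho=1$, $c=1$ (justified via \Cref{prop:posterior_gaussian_model} and \Cref{thm:sub_gaussian-rows}), and the second claim via $\trace{C_{EE}^2} \leq \|C_{EE}\|_2 \trace{C_{EE}}$. Your added details (verifying each sub-Gaussian hypothesis explicitly and noting the monotonicity that reduces the comparison to the eigenvalue inequality) simply spell out what the paper leaves implicit.
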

\begin{proof}
The first claim is a corollary of \Cref{rem:answer_question_2}, since for the Gaussian model the hypotheses of \Cref{rem:answer_question_2} hold with $C=C_{EE}$, $\rho = 1$, and $c=1$; see \Cref{prop:posterior_gaussian_model} and \Cref{thm:sub_gaussian-rows}. The second claim is by the relation $\trace{C_{EE}^2} \leq \| C_{EE}\|_2 \trace{C_{EE}}$.
\end{proof}
\Cref{thm:out_of_sample_error} can be used to derive an upper bound for the number of samples in \Cref{quest:conf_err}  also when the LMMSE and the MMSE estimators do not coincide. However, since the arguments required for this derivation do not provide further insight, we focus instead on the asymptotic behavior of the error.

\begin{prop}[Asymptotic error]\label{rem:asymptotic_error}
Suppose that,  for every $M \in \mathbb{N}$,  the hypotheses of \Cref{thm:out_of_sample_error} hold with $(C,\mu,\rho)$ independent of $M$. Let $n=n(M)$ be such that $\lim_{M \to \infty} M/n(M) = \gamma$ for some $0 < \gamma < 1/(c^2 \rho^4)$. Then
\begin{equation*}
\lim_{M \to \infty} \ev_Y{ \| \hat{\Theta}^\top Y - \Theta_*^\top Y \|_2^2} \leq  \varepsilon_{\text{err}}^\gamma + \varepsilon_{\text{bias}}^\gamma \quad \text{almost surely},
\end{equation*}
where
$
\varepsilon_{\text{err}}^\gamma \coloneqq  \trace{C} \gamma /(1 - c \rho^2 \sqrt{\gamma})^2 
$
and
$
\varepsilon_{\text{bias}}^\gamma \coloneqq (3/2) \rho^2\| \mu\|_2^2  \gamma /(1- c \rho^2 \sqrt{\gamma})^4.
$
\end{prop}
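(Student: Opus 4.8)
The plan is to apply \Cref{thm:out_of_sample_error} along a sequence of confidence parameters that grows slowly in $M$, arrange the exceptional probabilities to be summable, and then pass to the limit in the deterministic error bound with the help of the Borel--Cantelli lemma. This parallels the proof of \Cref{rem:asymptotic_err}, the difference being that we now want almost-sure rather than in-probability convergence, which is what forces the summability; note also that, unlike there, we cannot appeal to \Cref{thm:conc_trace_prec} here, since the entries of $\boldsymbol{Z}$ need not be independent, so we must work directly with the sub-Gaussian singular-value bounds already built into \Cref{thm:out_of_sample_error}.

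Concretely, for each $M$ I would apply \Cref{thm:out_of_sample_error} with $t = s = 2\ln M$. Since $M/n(M) \to \gamma$ with $n(M) \to \infty$, we have $c\rho^2(\sqrt{M}+\sqrt{t})/\sqrt{n} = c\rho^2(\sqrt{M/n}+\sqrt{t/n}) \to c\rho^2\sqrt{\gamma}$, and $c\rho^2\sqrt{\gamma} < 1$ by the hypothesis $\gamma < 1/(c^2\rho^4)$; hence the requirement $\sqrt{n} - c\rho^2(\sqrt{M}+\sqrt{t}) > 0$ of \Cref{thm:out_of_sample_error} holds for all sufficiently large $M$. For such $M$, \Cref{thm:out_of_sample_error} gives $\ev_Y{\|\hat{\Theta}^\top Y - \Theta_*^\top Y\|_2^2} \le \varepsilon_{\text{err}}(M) + \varepsilon_{\text{bias}}(M)$ outside an event of probability at most $3e^{-t} + Ne^{-s} = (3+N)M^{-2}$, where $\varepsilon_{\text{err}}(M)$ and $\varepsilon_{\text{bias}}(M)$ denote the two quantities in \Cref{thm:out_of_sample_error} evaluated at $t = s = 2\ln M$ (here $N$ is a fixed constant, since $C$ and $\mu$ do not depend on $M$). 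Because $\sum_M (3+N)M^{-2} < \infty$, the Borel--Cantelli lemma, applied on the common probability space carrying the sequence $\{\hat{\Theta}(M)\}_M$ (cf.\ \cite{shiryaev2016}), shows that, almost surely, $\ev_Y{\|\hat{\Theta}^\top Y - \Theta_*^\top Y\|_2^2} \le \varepsilon_{\text{err}}(M) + \varepsilon_{\text{bias}}(M)$ for all but finitely many $M$.

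It then remains to compute the limits of the deterministic quantities $\varepsilon_{\text{err}}(M)$ and $\varepsilon_{\text{bias}}(M)$. Dividing numerator and denominator of $\varepsilon_{\text{err}}(M)$ by $n$ and using $M/n \to \gamma$, $t/n \to 0$, and $\sqrt{Mt}/n \to 0$ gives $\varepsilon_{\text{err}}(M) \to \trace{C}\gamma/(1 - c\rho^2\sqrt{\gamma})^2 = \varepsilon_{\text{err}}^\gamma$. For $\varepsilon_{\text{bias}}(M)$, dividing numerator and denominator of the leading factor by $n^2$ and using in addition $s/n \to 0$ and $\sqrt{Ms}/n \to 0$ shows that this factor tends to $(3/2)\rho^2\|\mu\|_2^2\gamma/(1 - c\rho^2\sqrt{\gamma})^4$, while the bracketed correction factor tends to $1$ because $2\sqrt{t}\,\|C\|_2/\sqrt{M\trace{C^2}} = (2\|C\|_2/\sqrt{\trace{C^2}})\sqrt{t/M} \to 0$ and the ratio $\bigl((\sqrt{n}+c\rho^2(\sqrt{M}+\sqrt{t}))/(\sqrt{n}-c\rho^2(\sqrt{M}+\sqrt{t}))\bigr)^2$ stays bounded (it converges to $((1+c\rho^2\sqrt{\gamma})/(1-c\rho^2\sqrt{\gamma}))^2$). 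Hence $\varepsilon_{\text{bias}}(M) \to (3/2)\rho^2\|\mu\|_2^2\gamma/(1-c\rho^2\sqrt{\gamma})^4 = \varepsilon_{\text{bias}}^\gamma$.

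Putting the two steps together gives, almost surely, $\limsup_{M\to\infty}\ev_Y{\|\hat{\Theta}^\top Y - \Theta_*^\top Y\|_2^2} \le \lim_{M\to\infty}\bigl(\varepsilon_{\text{err}}(M)+\varepsilon_{\text{bias}}(M)\bigr) = \varepsilon_{\text{err}}^\gamma + \varepsilon_{\text{bias}}^\gamma$, which is the asserted bound. The limit computations in the third paragraph are elementary (if a little tedious for $\varepsilon_{\text{bias}}$); the main obstacle is the probabilistic step in the second paragraph, namely setting up rigorously the common probability space on which the events for the different dimensions $M$ are simultaneously defined so that Borel--Cantelli applies — this is exactly the technical point the paper flags with the citation to \cite{shiryaev2016} — together with the minor bookkeeping that the rate $t = s = 2\ln M$ must be simultaneously slow enough for the limits and fast enough for summability, which any rate $(1+\eta)\ln M$ with $\eta > 0$ achieves.
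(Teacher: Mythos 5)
Your proposal is correct and follows essentially the same route as the paper's proof: apply \Cref{thm:out_of_sample_error} with $M$-dependent confidence parameters chosen so that the failure probabilities are summable, invoke the Borel--Cantelli lemma, and pass to the limit in the deterministic bounds, which converge to $\varepsilon_{\text{err}}^\gamma + \varepsilon_{\text{bias}}^\gamma$. The only (immaterial) difference is your choice $t=s=2\ln M$, whereas the paper takes $t,s$ proportional to $\sqrt{n(M)}$ and then lets the resulting slack $\varepsilon$ tend to zero.
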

\begin{proof}
Let $v,w \geq 0$ be arbitrary and define $\gamma(M) \coloneqq M/n(M)$. Since $\lim_{M \to \infty} \gamma(M) = \gamma$ and $\gamma <1/(c \rho^2)$, there is $m_1 \in \mathbb{N}$ such that for all $M\geq m_1$ we have $1- c \rho^2( \sqrt{\gamma(M)} + \sqrt{v \gamma(M)/\sqrt{n(M)}}) > 0$. By applying \Cref{thm:out_of_sample_error} for $t = v\sqrt{n(M)}$ and $w = s \sqrt{n(M)}$, we obtain that
\begin{equation*}
\ev_Y{ \| \hat{\Theta}^\top Y - \Theta_*^\top Y \|_2^2} \leq  \trace{C} a(M) + (3/2) \rho^2\| \mu\|_2^2 b(M) (1+c(M)) ,
\end{equation*}
with probability at least 
$
1 - 3 \exp(-v \sqrt{n(M)}) - N\exp(-s \sqrt{n(M)})$, where 
\begin{align*}
a(M) &\coloneqq  \frac{\gamma(M) + 2 \sqrt{v\,  \trace{C^2} \gamma(M)/\sqrt{n(M)} } + 2  v \| C \|_2/ \sqrt{n(M)} }{(1 - c \rho^2 (\sqrt{\gamma(M)} +  \sqrt{v/\sqrt{n(M)}}))^2}, \\
b(M) &\coloneqq  \frac{\gamma(M) + 2 \sqrt{w \gamma(M)/\sqrt{n(M)}} + 2 w/\sqrt{n(M)} }{(1 - c \rho^2 (\sqrt{\gamma(M)}+\sqrt{v/\sqrt{n(M)}}))^4}, \\ 
c(M) &\coloneqq \frac{2 \sqrt{v/\sqrt{n(M)}}  \|C\|_2}{\sqrt{\gamma(M)}\sqrt{\trace{C^2}}}  \left(\frac{1 + c \rho^2(\sqrt{\gamma(M)} + \sqrt{v/\sqrt{n(M)}})} {1-c\rho^2 ( \sqrt{\gamma(M)}+\sqrt{v/\sqrt{n(M)}})} \right)^2.
\end{align*}
Since $\lim_{M \to \infty} a(M) = \gamma/(1 - c \rho^2 \sqrt{\gamma})^2$, $\lim_{M \to \infty} b(M) = \gamma/(1- c \rho^2 \sqrt{\gamma})^4$, and $\lim_{M \to \infty} c(M) = 0$, for every $\varepsilon > 0$, we can find $m_2 \geq m_1$ such that for all $M \geq m_2$
\begin{equation*}
\mathbb{P} \left( \ev_Y{ \| \hat{\Theta}^\top Y - \Theta_*^\top Y \|_2^2} \geq  \varepsilon_{\text{err}}^\gamma + \varepsilon_{\text{bias}}^\gamma + \varepsilon \right) \leq  3 \exp(-v \sqrt{n(M)}) - N\exp(-w \sqrt{n(M)}.
\end{equation*}
Since the ratio test shows that the series 
$
 \sum_{M=m_2}^\infty  3 \exp\left(-v \sqrt{n(M)}\right) + N\exp \left(-s \sqrt{n(M)} \right)  
$
converges, the claim follows by the Borel--Cantelli lemma \cite[p.~309]{shiryaev2016}.
\end{proof}
Note that in \Cref{rem:asymptotic_error} the asymptotic error bound behaves as $\mathcal{O}(\gamma)$ for $\gamma \to 0$.

\section{Numerical experiments}\label{sec:num-exp}
We numerically investigate the behavior of the mean squared error of the least squares estimator by considering two models: a Gaussian model with a random forward operator, and a denoising model for low resolution images.
\subsection{Gaussian model}\label{subsec:data}
\paragraph{The data.}
To create the covariance matrices $C_{XX} \in \mathbb{R}^{N \times N}$ and $C_{ZZ} \in \mathbb{R}^{M \times M}$ for the Gaussian model we proceed as follows: first, we create  orthogonal matrices $P \in \mathbb{R}^{N \times N}$ and $Q \in \mathbb{R}^{M \times M}$ randomly using the procedure from \cite[Section 4.6]{edelman_rao_2005}. Then, we create  diagonal matrices $\Lambda_X \in \mathbb{R}^{N \times N}$ and $\Lambda_Z \in \mathbb{R}^{M \times M}$ by letting their diagonal entries be independent samples from a random variable that is uniformly distributed in $[0,1]$. Finally, we set $C_{XX} = P \Lambda_X P^\top$ and $C_{ZZ} = Q \Lambda_Z Q^\top$. The entries of the matrix $A \in \mathbb{R}^{M \times N}$ are independent samples from a standard normal random variable. 

\paragraph{Experimental setup.}
For each $N,M \in \{ 16, 128\}$, we create $A$, $C_{XX}$, and $C_{ZZ}$ as described in the previous paragraph. We calculate the LMMSE estimator $\Theta_*$ and its mean squared error $\trace{C_{EE}}$ based on the formulas in \Cref{prop:characterization_of_LMMSE}. For each $\varepsilon\in  \{1/16, 1/4, 1/2, 1 \}$, we choose the number of samples $n$ according to the formula in \eqref{eq:formula_n_gaussian}, and repeat the following procedure $300$ times:
 \begin{enumerate}[label=\arabic*.)]
\item Create $n$ independent samples $\{(X_i,Z_i)\}_{i=1}^n$ of $(X,Z)$ and let $Y_i \coloneqq A X_i + Z_i$.

\item Compute the least squares estimator $\hat{\Theta}$ based on the formula in \eqref{eq:least_squares estimator}.
\item Calculate the mean squared error of the least squares estimator by using the decomposition
in \eqref{MSE_decompostion_linear}.

\end{enumerate}
For each $\varepsilon\in  \{1/16, 1/4, 1/2, 1 \}$, we obtain $300$ realizations of the mean squared error of the least squares estimator. We determine the empirical tail distribution of these realizations by determining, for given $\tau \geq 1$, the fraction of realizations that is greater than $\tau \trace{C_{EE}}$. 
\paragraph{Results.}
In \Cref{fig:empirical_tail_mse_Gauss}, we plot the empirical tail distribution of the mean squared error of the least squares estimators alongside the theoretical prediction for the expected mean squared error. We find that, for fixed $\varepsilon>0$ and $n$ chosen as in \eqref{eq:formula_n_gaussian}, the mean squared error is more concentrated (relative to the size of $\trace{C_{EE}}$) around the expected value for larger values of $M$ and $N$. This is consistent with the sample bound from \Cref{rem:answer_question_2_Gauss}.

\begin{figure}[tbhp]
\centering
\subfloat[$M = 16$ and $N = 16$]{
\includegraphics[trim=20 50 90 110,clip, width=0.5\columnwidth]{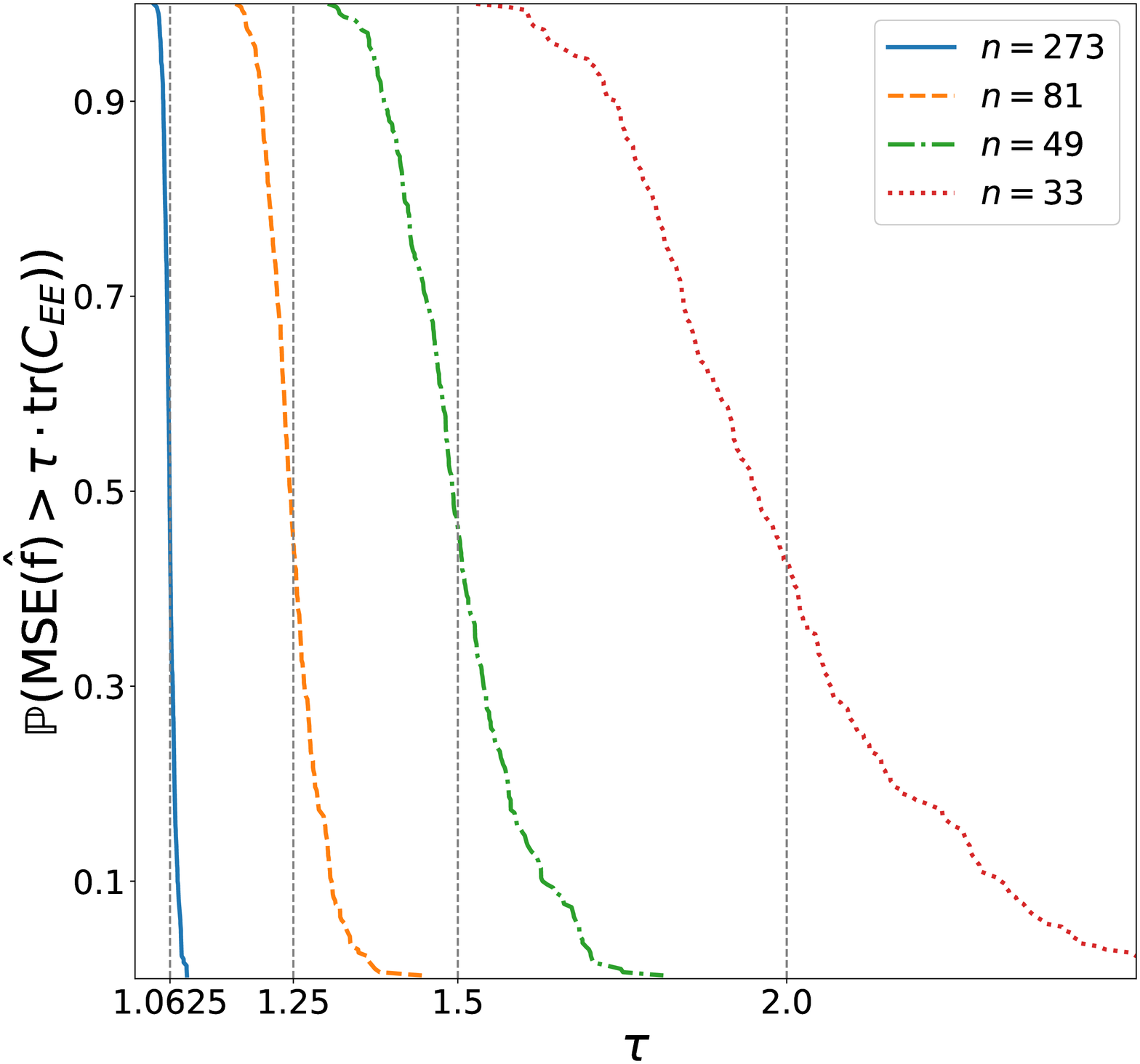}}
\subfloat[$M = 16$ and $N = 128$]{
\includegraphics[trim=20 50 90 110 , clip, width=0.5\columnwidth]{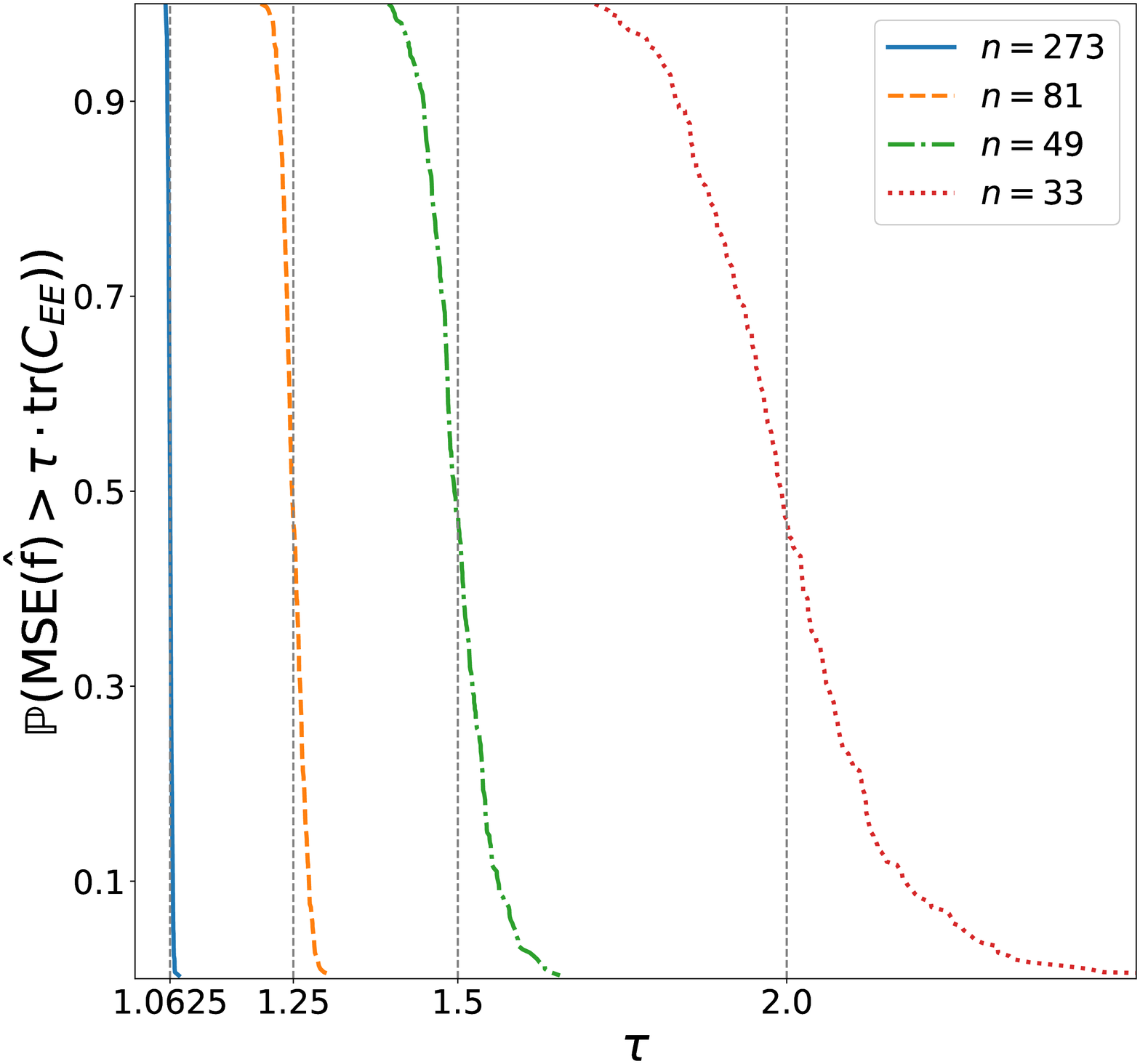}}

\subfloat[$M = 128$ and $N = 16$]{
\includegraphics[trim=20 50 90 110, clip, width=0.5\columnwidth]{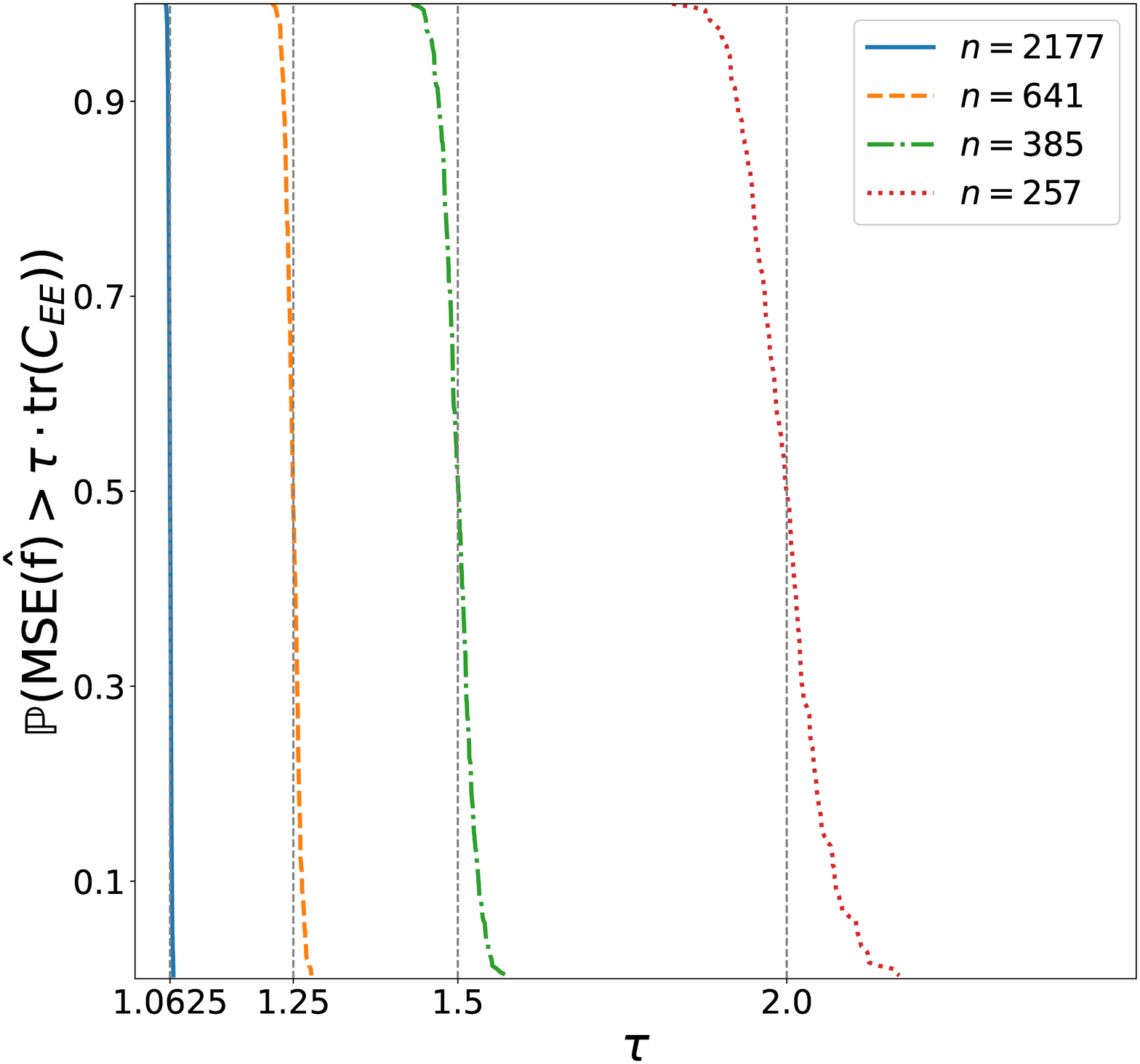}}
\subfloat[$M = 128$ and $N = 128$]{
\includegraphics[trim=20 50 90 110, clip, width=0.5\columnwidth]{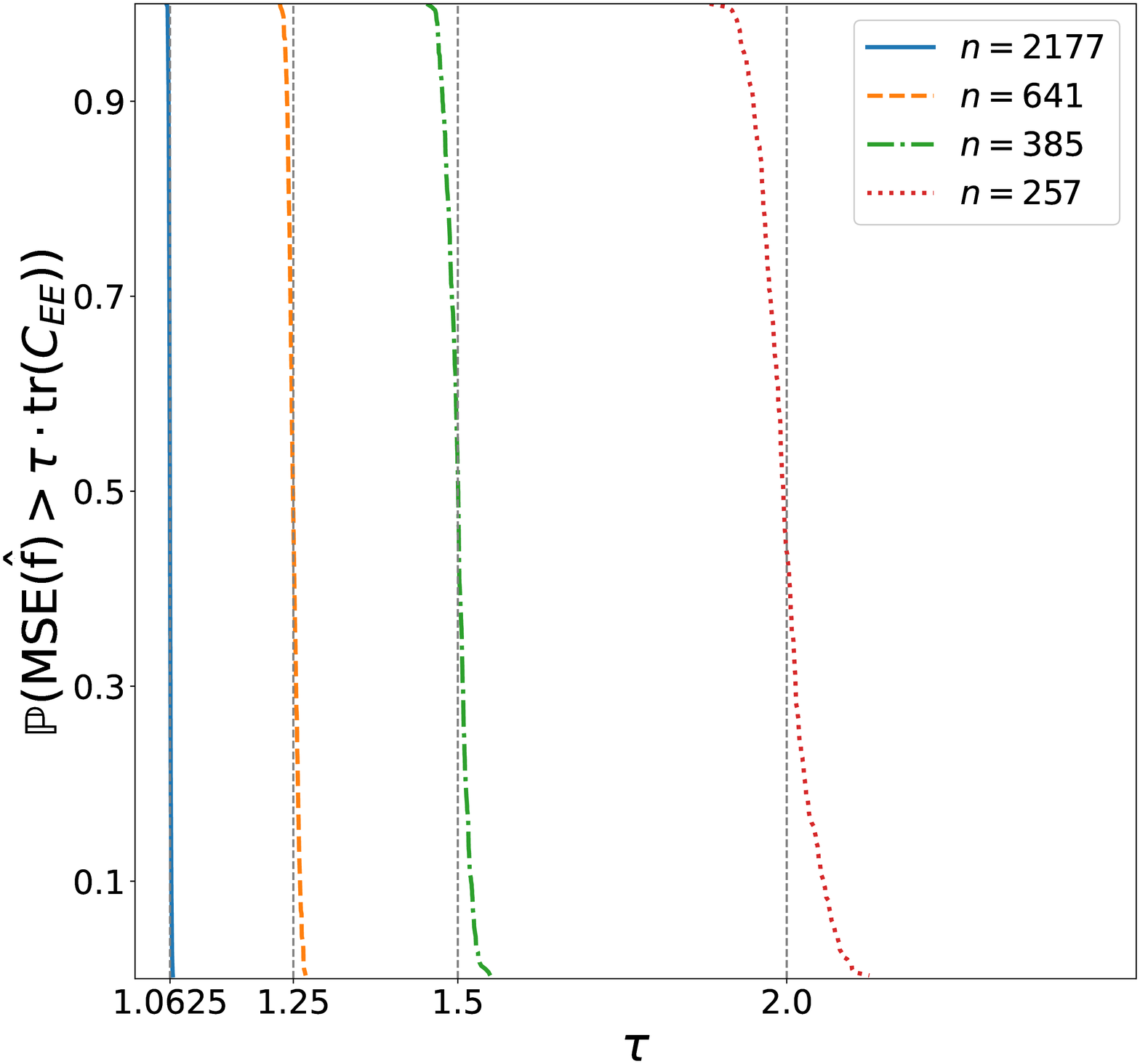}}

\caption{Empirical tail distribution of the mean squared error of the least squares estimator for the Gaussian model for $M,N \in \{16,128\}$. The dashed vertical lines mark the predictions for the expected mean squared error.}\label{fig:empirical_tail_mse_Gauss}
\end{figure}

\subsection{Denoising model}
We perform experiments for learning the LMMSE estimator for a denoising problem. The purpose of the experiments is to demonstrate that the provided insight into how many samples are needed to obtain reliable approximations to the LMMSE estimator is useful beyond the Gaussian case. It is not the purpose of these experiments to argue that the LMMSE estimator is a good choice for denoising images; if anything, our results indicate that there is a fundamental limit to the quality of any denoising method which leads to a linear mapping from the data to an estimate.

\paragraph{The data.}
We use the Fashion-MNIST dataset\footnote{Fashion-MNIST is available at \url{https://github.com/zalandoresearch/fashion-mnist}.
} \cite{xiao2017}. Fashion-MNIST is a dataset of e-commerce company Zalando's article images, consisting of $70000$ samples. The samples are divided into 60000 training samples and 10000 test samples. Each sample is a $28 \times 28$ grayscale image. Each grayscale image displays a piece of clothing from one of 10 classes. We first rescale every image such that the pixel values are in $[0,1]$. We then subtract the mean of all images from the samples so that we can assume that the samples have mean zero. Treating the images $X $ as a discrete random variable with values in $\mathbb{R}^{28^2}$,  we compute $C_{XX}$ directly from the rescaled training images, by letting
$
C_{XX} =(1/60000) \sum_{i=1}^{60000} X_i X_i^\top. 
$
To create noisy images, for $\sigma = 0.1$, we add independent samples of a random variable that is uniformly distributed in $[-\sigma \sqrt{3}, \sigma \sqrt{3}]$ to each pixel of each image. This choice ensures that $C_{ZZ} = \sigma^2 \id$. The mean squared error of the LMMSE estimator on the training set is given by 
$
\trace{C_{EE}}= \sum_{i=1}^N \xi_i \sigma^2 / (\xi_i + \sigma^2),
$
 where $\xi_i$ are the eigenvalues of $C_{XX}$.

\paragraph{Experimental setup.}
We calculate the LMMSE estimator $\Theta_*$ and its mean squared error $\trace{C_{EE}}$ based on the formulas in \Cref{prop:characterization_of_LMMSE}. For each $\varepsilon\in  \{1/16, 1/4, 1/2, 1 \}$, we choose $n$ according to \eqref{eq:formula_n_gaussian} and repeat the following procedure 300 times: 
\begin{enumerate}[label=\arabic*.)]
\item Choose $n$ samples $\{X_i\}_{i=1}^n$ (without repetitions) from the set of training images, create $n$ independent samples $\{Z_i\}_{i=1}^n$ of $Z$, and let $Y_i = X_i + Z_i$.\label{eq:step1_gaussian_denois}
\item Compute the least squares estimator $\hat{\Theta}$ based on the formula in \eqref{eq:least_squares estimator}.
\item Calculate the mean squared error of the least squares estimator by using the decomposition
in \eqref{MSE_decompostion_linear}.

\item To compute the test error, use the $10000$ test image samples $\{X_{\text{test}, i}\}_{i=1}^{10000}$, which were neither used for computing the least squares estimator nor for obtaining $\trace{C_{EE}}$. Create the noisy data $\{Y_{\text{test}, i}\}_{i=1}^{10000}$ for the test images as in \ref{eq:step1_gaussian_denois}. Compute the test error of the least squares estimator by evaluating
\begin{equation}\label{eq:test_error_formula}
\text{MSE}_{\text{test}}(\hat{\Theta}) = \frac{1}{10000} \sum_{i=1}^{10000} \| \hat{\Theta}^\top Y_{\text{test}, i} - X_{\text{test}, i} \|_2^2.
\end{equation}

\end{enumerate}
For each $\varepsilon\in  \{1/16, 1/4, 1/2, 1 \}$, we obtain $300$ realizations of the mean squared error. We calculate the empirical tail distribution as for the Gaussian model. 
\paragraph{Results.}
In \Cref{fig:mse_denoising_least_squares_28square}, we plot the empirical tail distribution of the mean squared error of the least squares estimator. In \Cref{fig:test_error_denoising_least_squares_28square}, we plot the empirical tail distribution of the test error of the least squares estimator. In \Cref{fig:samples_from_fashionMNIST}, we display 6 sample images from the test set. We make the following observations: 
\begin{enumerate}[label=\arabic*.)]
\item In all tested cases, the test error and the mean squared error were similar, even though the test samples were not used for computing the mean squared error. 

\item For $n$ chosen as in \eqref{eq:formula_n_gaussian}, the mean squared error and the test error of the least squares estimators behaved almost deterministically.

\item  For $n$ chosen as in \eqref{eq:formula_n_gaussian}, the mean squared error of the least squares estimator approaches the asymptotic prediction from \eqref{eq:err_conv_prob} as $\varepsilon>0$ tends to zero.   
\end{enumerate}

\begin{figure}[tbhp]
\centering
\subfloat[Tail distribution of the mean squared error. \label{fig:mse_denoising_least_squares_28square}]{
\includegraphics[width=0.5\columnwidth]{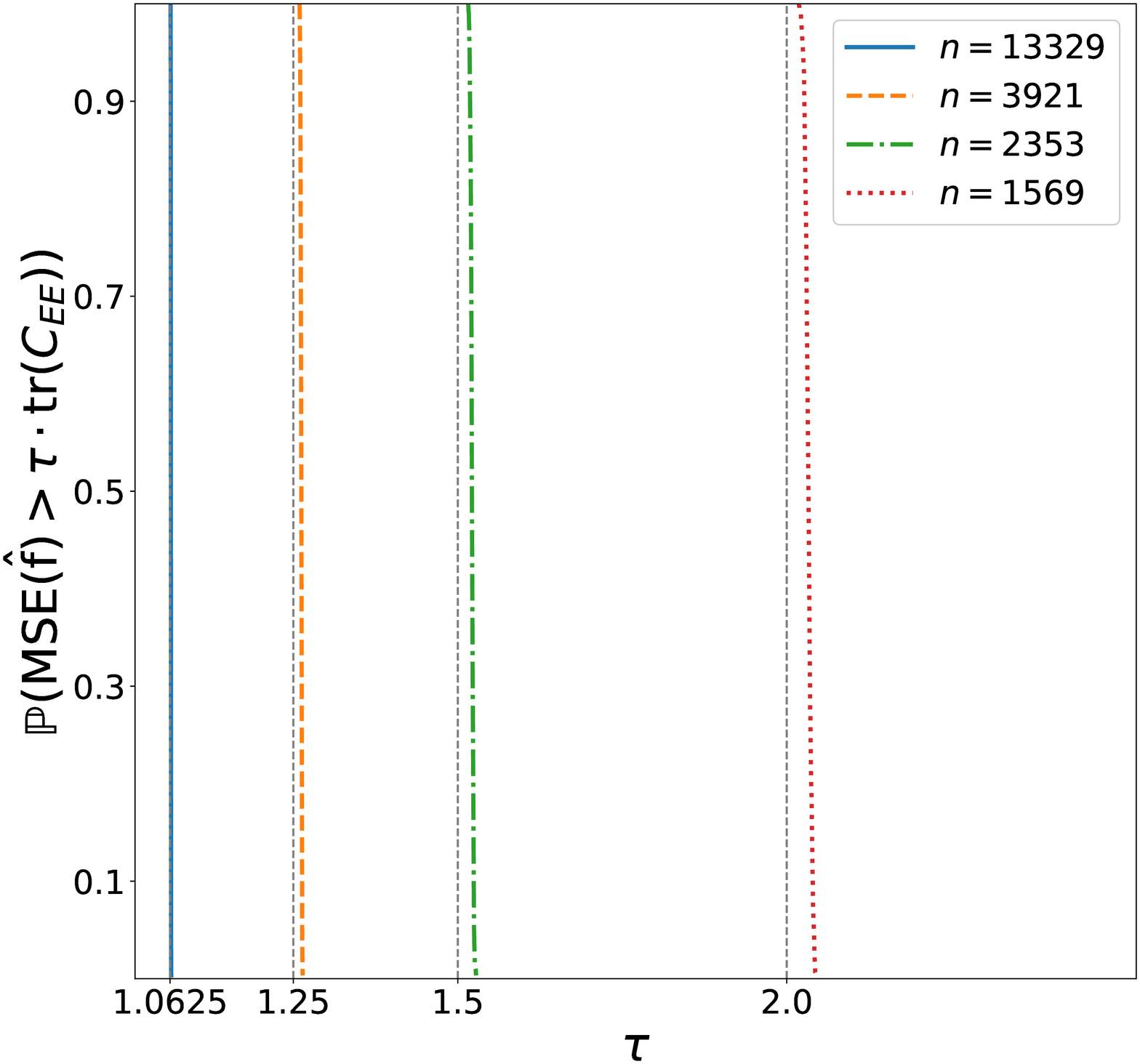}}\subfloat[Tail distribution of the test error. \label{fig:test_error_denoising_least_squares_28square}]{
\includegraphics[width=0.5\columnwidth]{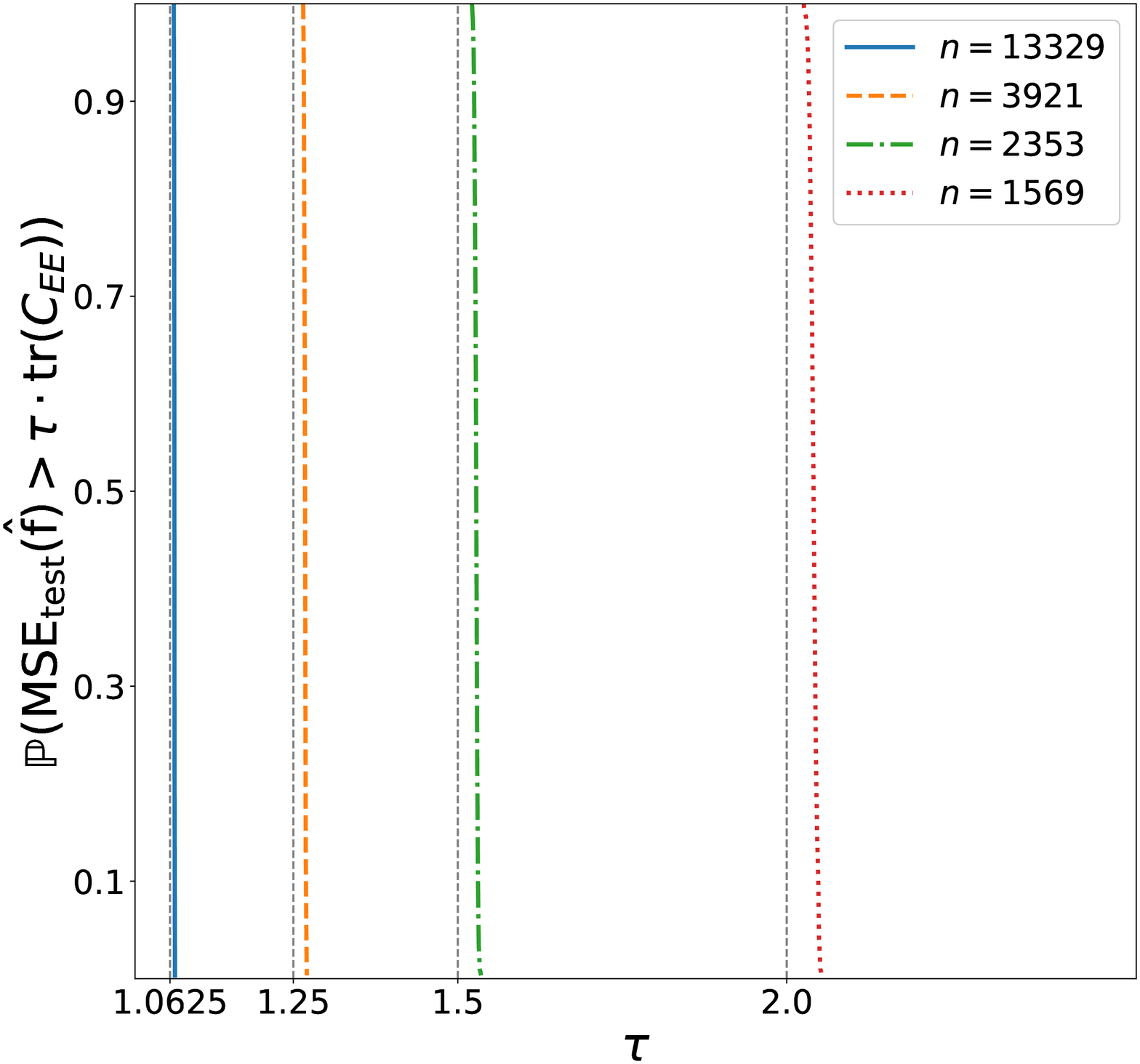}}

\caption{Empirical tail distribution of the mean squared error and the test error of the least squares estimator for the denoising model. The dashed vertical lines mark the predictions for the asymptotic mean squared error from \eqref{eq:err_conv_prob}.}

\end{figure}

\begin{figure}[tbhp]
\centering
\includegraphics[width=0.16\columnwidth]{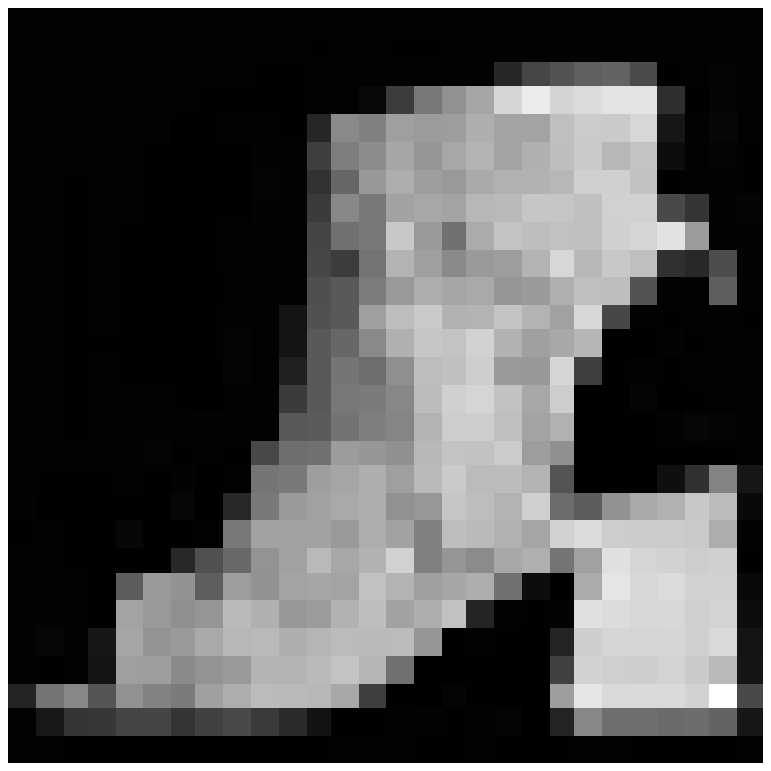}\includegraphics[width=0.16\columnwidth]{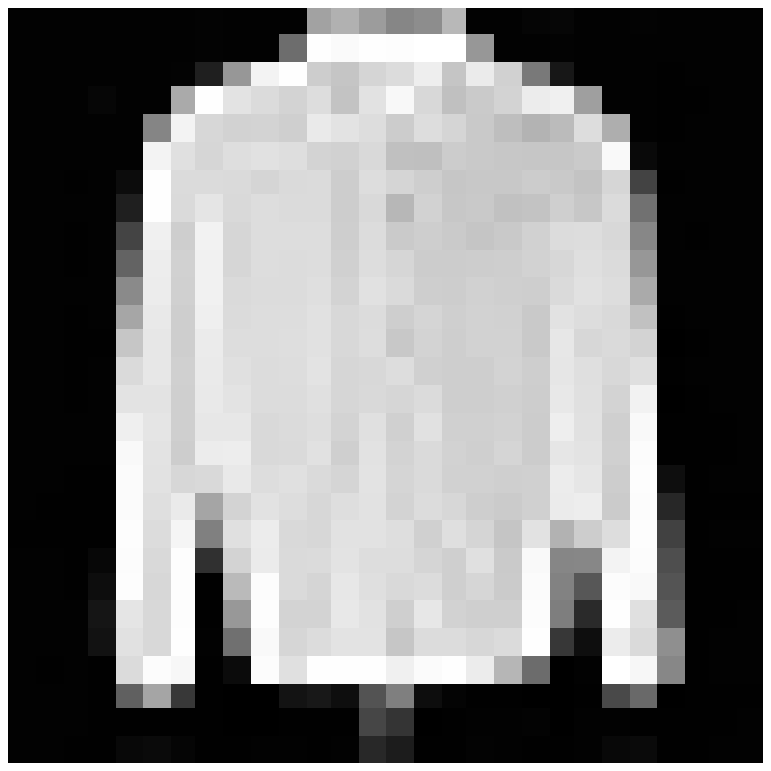}\includegraphics[width=0.16\columnwidth]{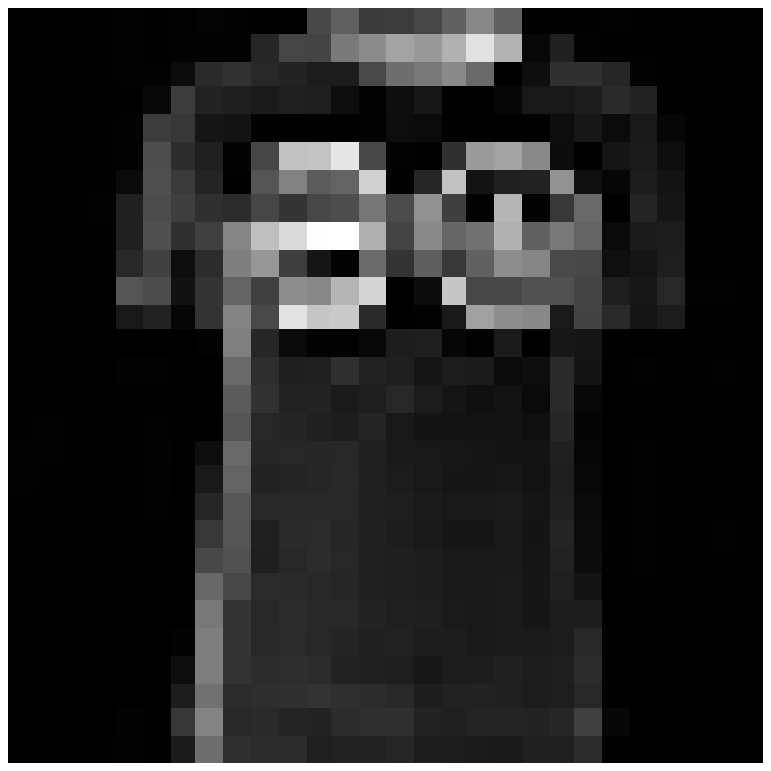}\includegraphics[width=0.16\columnwidth]{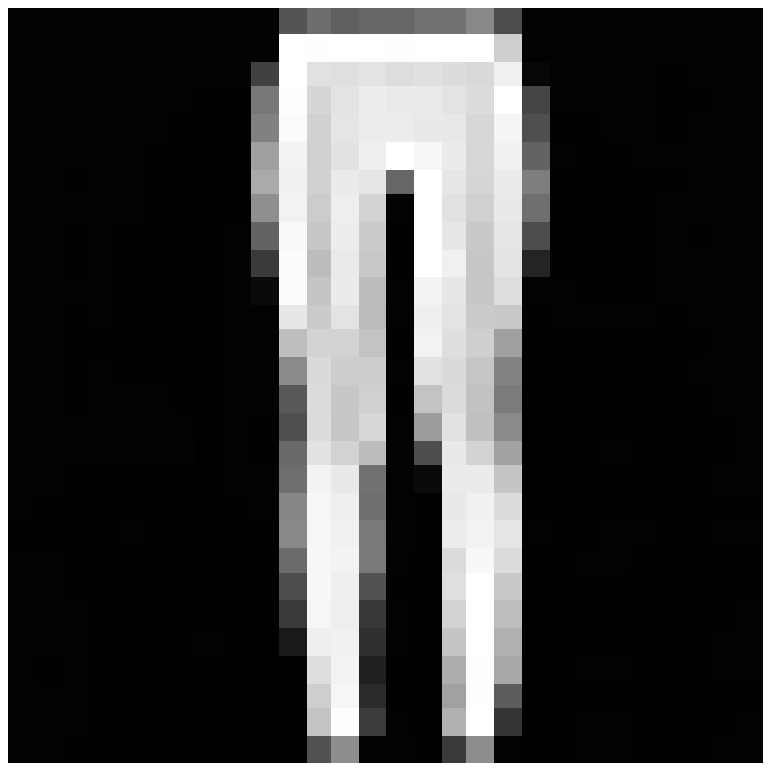}\includegraphics[width=0.16\columnwidth]{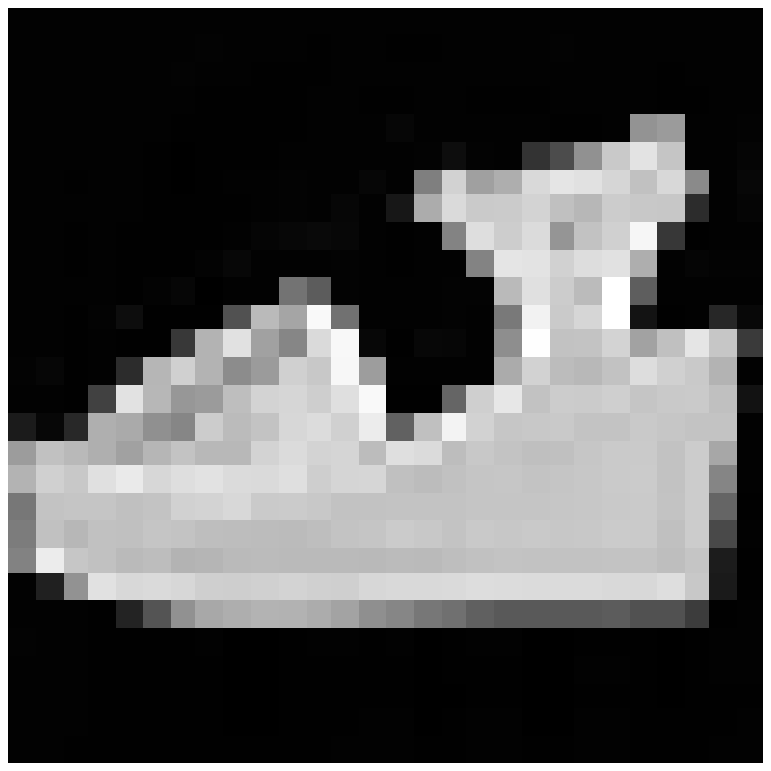}\includegraphics[width=0.16\columnwidth]{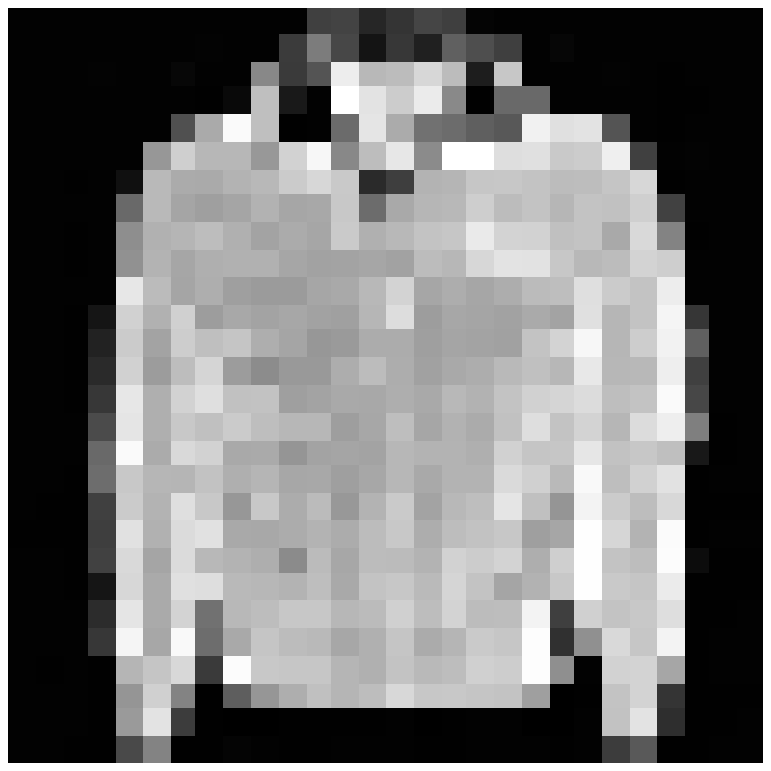}

\includegraphics[width=0.16\columnwidth]{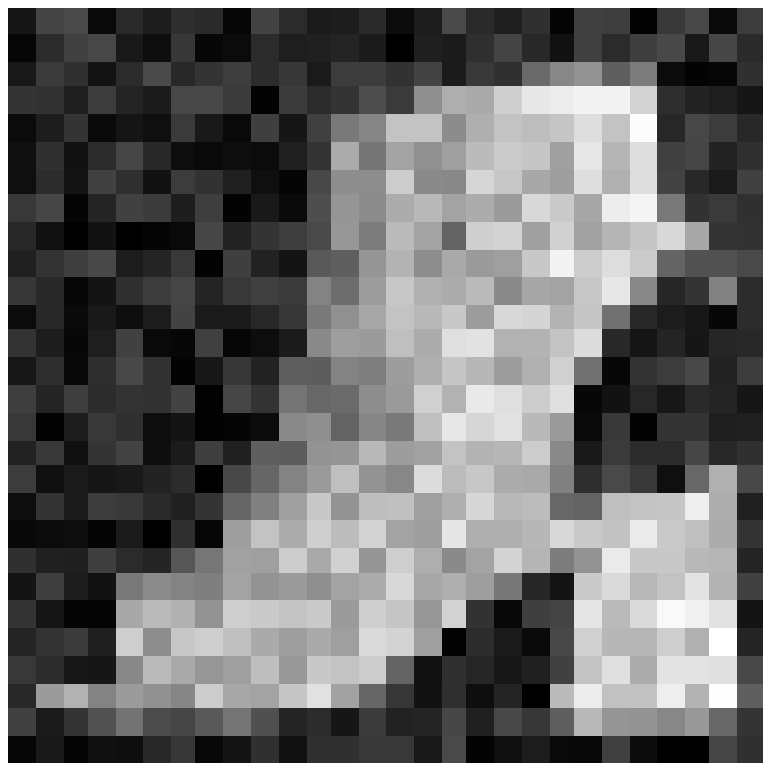}\includegraphics[width=0.16\columnwidth]{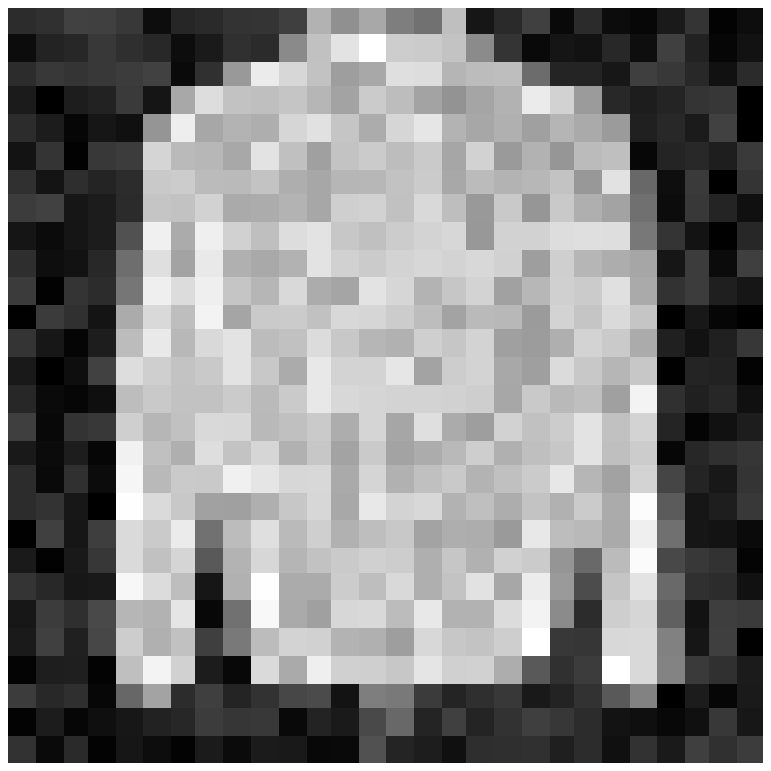}\includegraphics[width=0.16\columnwidth]{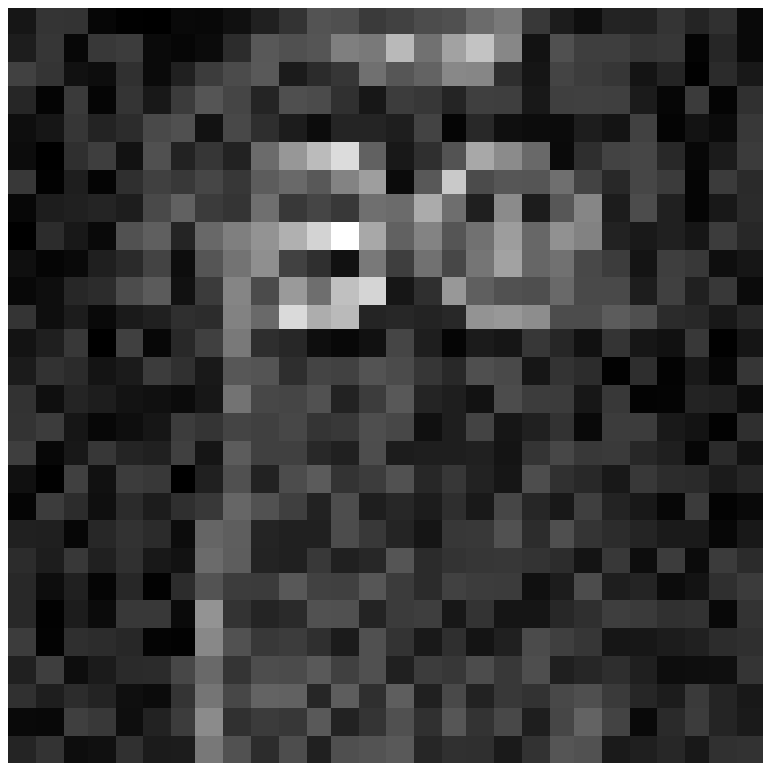}\includegraphics[width=0.16\columnwidth]{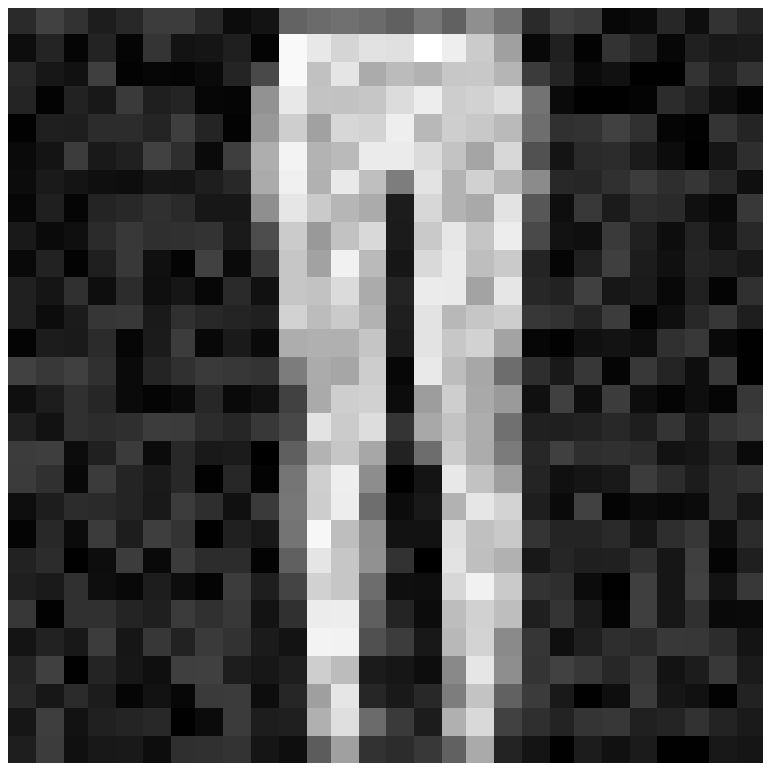}\includegraphics[width=0.16\columnwidth]{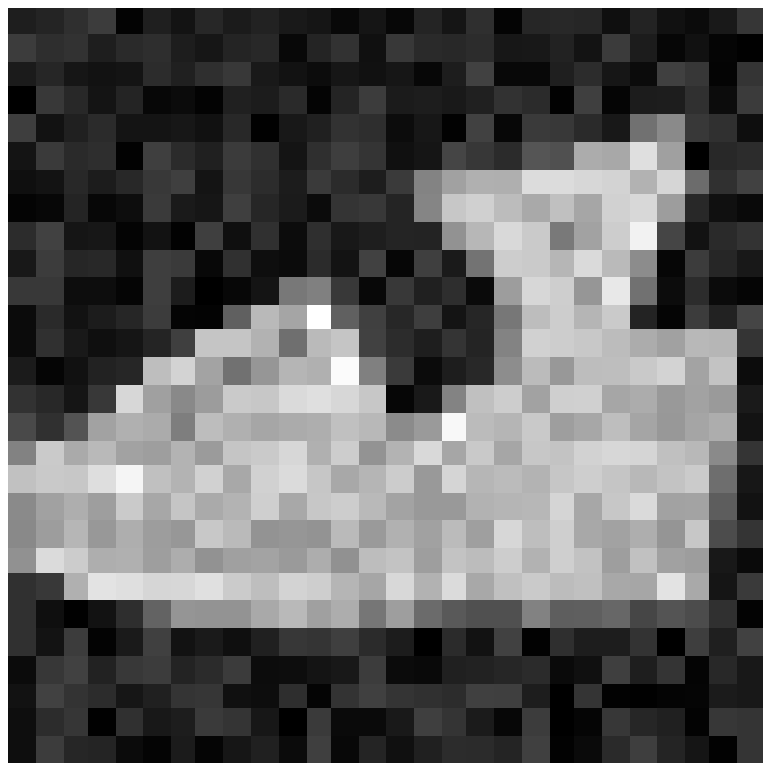}\includegraphics[width=0.16\columnwidth]{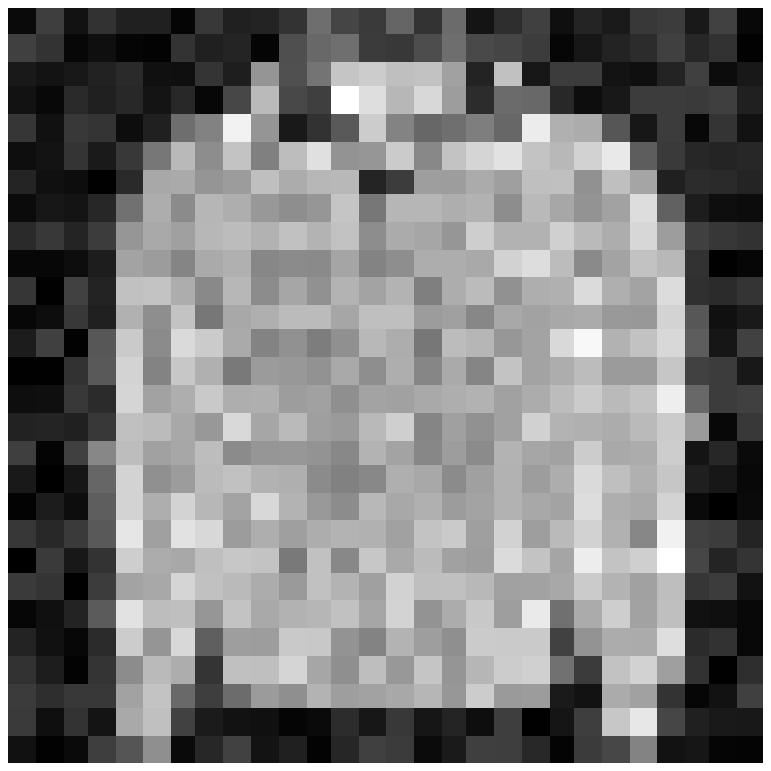}

\includegraphics[width=0.16\columnwidth]{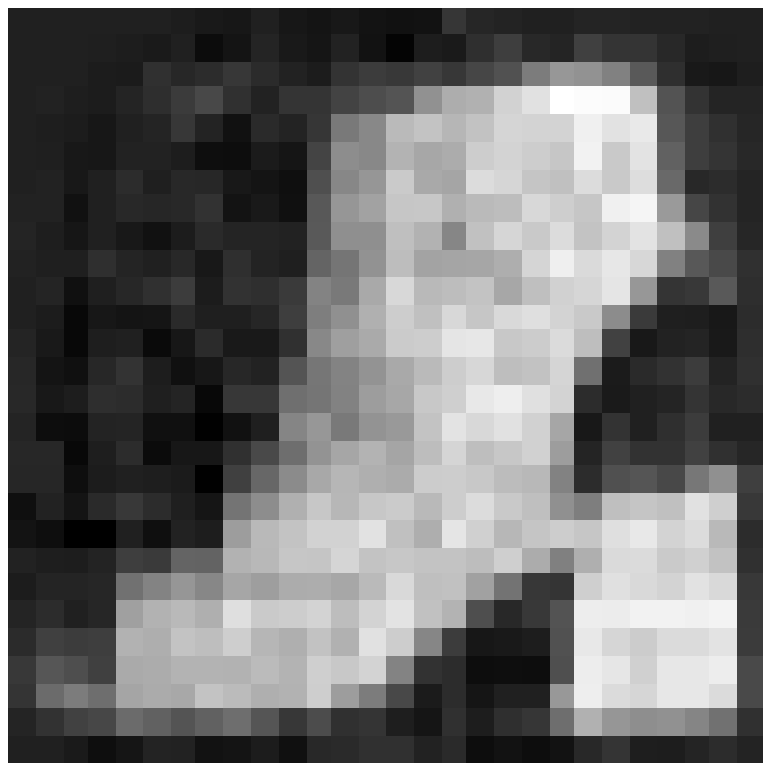}\includegraphics[width=0.16\columnwidth]{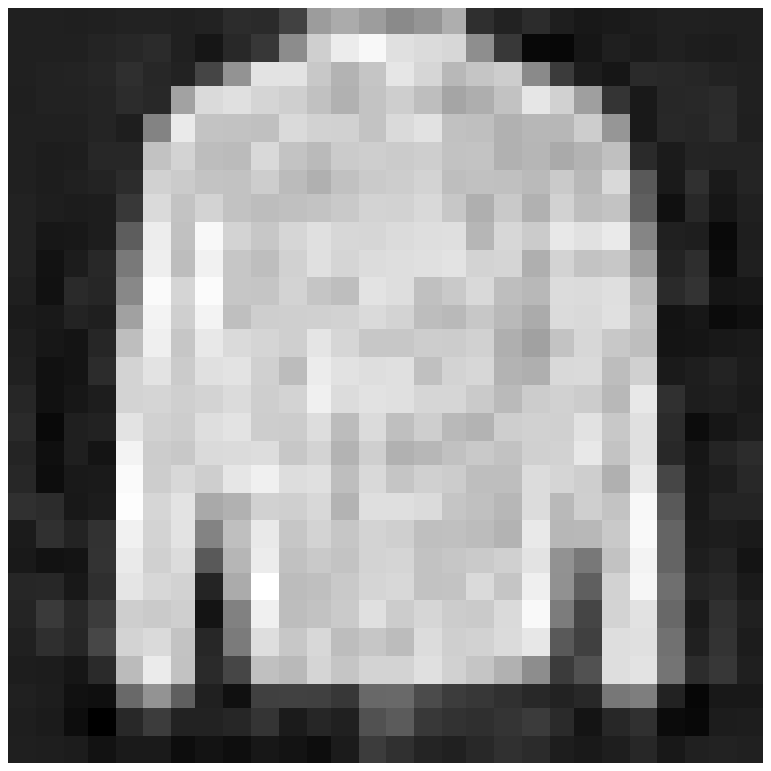}\includegraphics[width=0.16\columnwidth]{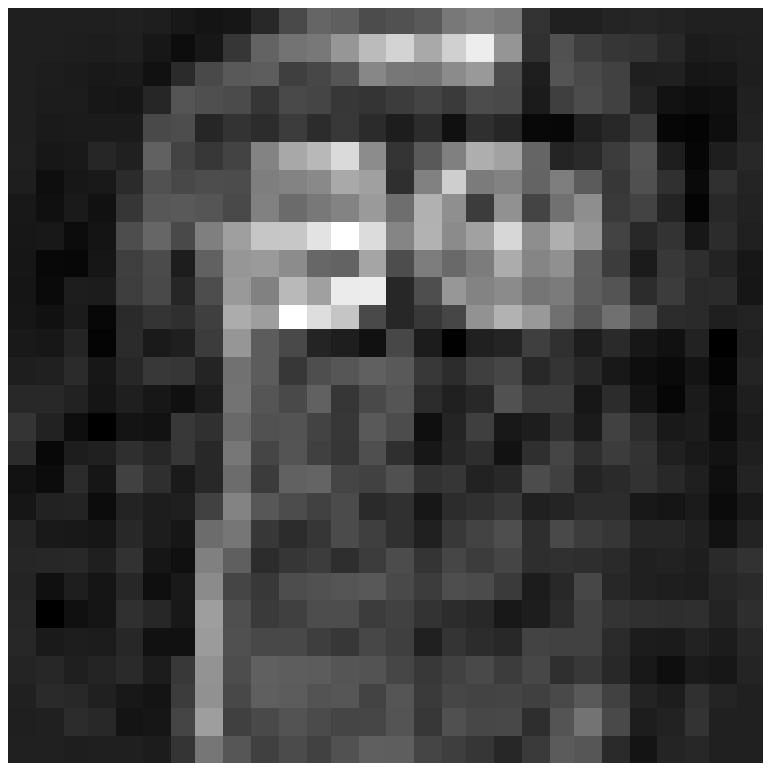}\includegraphics[width=0.16\columnwidth]{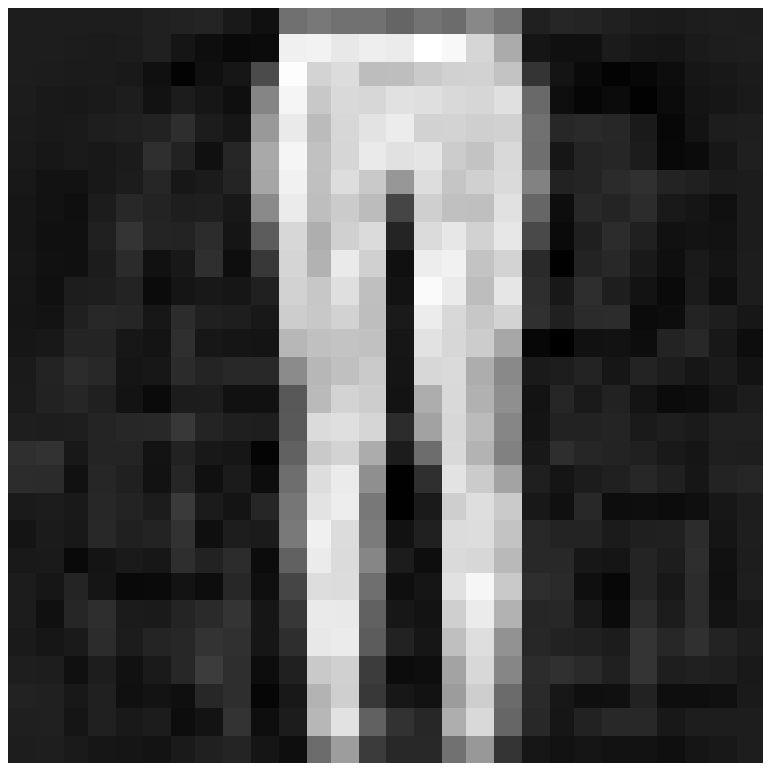}\includegraphics[width=0.16\columnwidth]{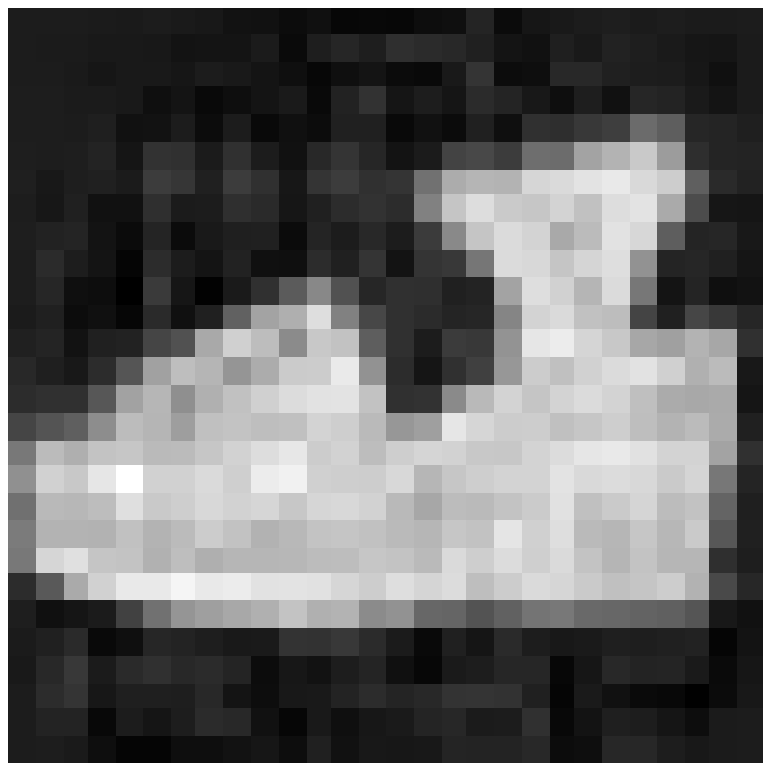}\includegraphics[width=0.16\columnwidth]{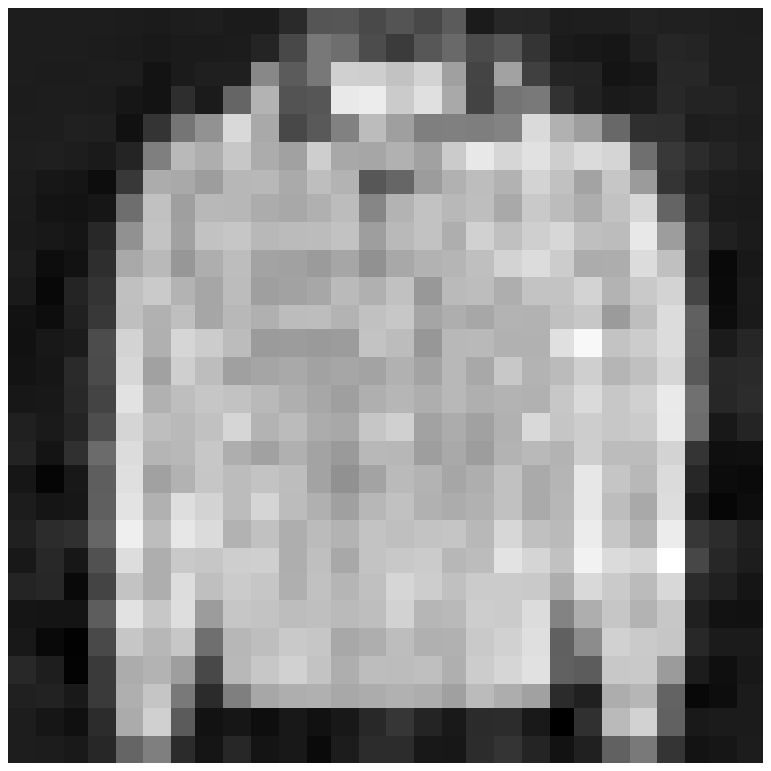}

\caption{ Ground truths (row 1), noisy data (row 2), and estimates (row 3) for 6 images from the test set. The least squares estimator was obtained based on $n = 13329$ samples.}\label{fig:samples_from_fashionMNIST} 
\end{figure}

\FloatBarrier

\section*{Acknowledgments}
The author gratefully acknowledges support by the International Research Training Group IGDK 1754 „Optimization and Numerical Analysis for Partial Differential Equations with Nonsmooth Structures“, funded by the German Research Council (DFG) and the Austrian Science Fund (FWF):[W 1244-N18]. Parts of the presented results originate from the author's dissertation. The author thanks his doctoral advisor, Karl Kunisch, for valuable discussions and for giving him the opportunity to work on this topic. Moreover, the author thanks Johannes Milz for valuable discussions and the idea for the proof of \Cref{lem:products_of_sub_gaussian_rv}.

\bibliography{main}
\bibliographystyle{plain}

\end{document}